\DeclareMathOperator{\R}{\mathbb{R}}
\DeclareMathOperator{\N}{\mathbb{N}}
\def \bk {{\mathbf k}}
\def \br {{\mathbf r}}
\def \bt {{\mathbf t}}
\def \bx {{\mathbf x}}
\def \alp {{\alpha}}
\def \eps {{\varepsilon}}
\def \sig {{\sigma}}
\def \balp {{\boldsymbol \alpha}}
\def \bzero {{\boldsymbol 0}}
\def \bone {{\boldsymbol 1}}
\def \bR {{\mathbb R}}
\def \bZ {{\mathbb Z}}
\def \bN {{\mathbb N}}
\def \cH {{\mathcal H}}
\def \cL {{\mathcal L}}
\def \cR {{\mathcal R}}
\newtheorem{thm}{Theorem}%[section]
\newtheorem{conj}[thm]{Conjecture}
\newtheorem{cor}[thm]{Corollary}
\newtheorem{rem}[thm]{Remark}
\newtheorem{pro}[thm]{Proposition}
\newtheorem{lemma}[thm]{Lemma}
\newtheorem*{rem*}{Remark}
\newtheorem*{thm*}{Theorem}
\newtheorem{example}[thm]{Example}
\title[Twisted Diophantine approximation for matrix maps]{Twisted Diophantine approximation for matrix transformations of tori}
\newcommand{\mmod}[1]{{{\,\,\mathrm{mod}\,\,#1}}}
\date{}
\author{Sam Chow}
\address{Sam Chow, Mathematics Institute, Zeeman Building, University of Warwick, Coventry CV4 7AL, UK}
\email{Sam.Chow@warwick.ac.uk}
\author{Qing-Long Zhou}
\address{Qing-Long Zhou, School of Mathematics and Statistics,
Wuhan University of Technology, 430070 Wuhan, PR China}
\email{zhouql@whut.edu.cn}
\begin{document}

%\subjclass{28A80; 11K55 11J83}
%keywords: Twist Diophantine approximation, Shrinking targets, Hausdorff measure. 

\begin{abstract}
Consider a sequence of integral matrices 
$\mathcal{A}=(A_n)_{n\in\N}$, and a $d$-tuple function ${\bf r}=(r_1,\ldots,r_d)\colon \N\to (0,\frac{1}{2})$. For a fixed vector ${\bm \alpha},$  we are interested in the set $\mathcal{T}_{{\bm \alpha}}(\mathcal{A}, {\bf r})$ of vectors ${\bm \beta}\in[0,1)^{d}$ for which $A_n{\bm \alpha}~~\!\!\!\!\!\pmod{1}$ infinitely often lies in the box centred at ${\bm \beta}$, with side lengths $2r_i(n)$ in each coordinate direction. Under mild conditions on $\mathcal{A}$ and  ${\bf r}$, we prove a metric dichotomy for the size of $\mathcal{T}_{{\bm \alpha}}(\mathcal{A}, {\bf r}),$  valid for almost every ${\bm \alpha}$ with respect to any fractal measure with a certain polynomial Fourier decay rate. Furthermore, removing all restrictions on ${\bf r}$, we establish a metric dichotomy for Lebesgue almost every ${\bm \alpha}.$ This solves a variant of a conjecture of Gonz\'{a}lez Robert, Hussain, Shulga and  Ward [Conjecture 1.10, Bull. London Math. Soc. 2025]. Finally, we also establish a Jarn\'{i}k-type theorem for $\mathcal{T}_{{\bm \alpha}}(\mathcal{A}, {\bf r}).$
\end{abstract}

%{\small \tableofcontents}

\maketitle

\section{Introduction}
For an irrational number $\alpha\in\mathbb{R}\backslash\mathbb{Q},$ the sequence $(\{n\alpha\})_{n\in\mathbb{N}}$ of fractional parts is uniformly distributed on $[0,1]$, in contrast to the rational case where the sequence is periodic. In 1900, Minkowski \cite{Minkowski1900} proved that for any irrational $\alpha$ and any $\beta \not \in \mathbb{Z} + \alpha\mathbb{Z}$, the inequality
\begin{equation} \label{Minkowski}
    |\!|n\alpha - \beta|\!|<\frac{1}{4n}
\end{equation}
holds for infinitely many  $n\in\mathbb{N},$ where 
$ |\!| \alpha |\!|$ denotes the distance from $\alpha$ to the nearest integer. The constant $\frac{1}{4}$ was later shown by Khintchine~\cite{Khintchine1946} to be optimal.

Minkowski's theorem above provides a uniform rate of approximation valid for all real numbers. Replacing the right-hand side of (\ref{Minkowski}) by a more rapidly decaying function $\psi: \N \to (0,\frac{1}{2})$ leads naturally to the study of the size of the set
\begin{equation*}
\mathcal{W}(\psi):=\{(\alpha,\beta)\in[0,1)^{2}\colon |\!|n\alpha - \beta|\!|<\psi(n) \text{ i.m. } n\in\N\},
\end{equation*}
where i.m. means `for infinitely many'. Fixing $\beta$ and varying $\alpha$ corresponds to classical \emph{inhomogeneous Diophantine approximation} (with $\beta=0$ giving the homogeneous case), while fixing  $\alpha$ and varying $\beta$ falls within the framework of \emph{twisted Diophantine approximation} \cite{BBDV09,BHKV10}.

\subsection{Homogeneous and inhomogeneous Diophantine approximation}

The $\psi$-well approximable set is defined as
 $$\mathcal{W}_{\beta}(\psi)
 :=\{\alpha\in[0,1)\colon |\!|n\alpha - \beta|\!|<\psi(n) \text{ i.m. } n\in\N\}.$$
The study of the metric properties of $\mathcal{W}_{\beta}(\psi)$ dates back a century to  Khintchine \cite{K24}, who proved for $\beta=0$ that 
\begin{equation*}
\mathcal{L}^{1}\big(\mathcal{W}_{\beta}(\psi)\big)=\begin{cases}
   0,   & \text{if $\sum_{n=1}^{\infty}\psi(n)<\infty$}, \\[2mm]
       1 , & \text{if $\sum_{n=1}^{\infty}\psi(n)=\infty$ and $\psi$ is non-increasing}.
\end{cases}
\end{equation*} 
Here and throughout, we write $\mathcal{L}^{d}(A)$ to denote the Lebesgue measure of  $A\subseteq\mathbb{R}^{d}$. Duffin and Schaeffer \cite{DS41} later demonstrated that the monotonicity condition is essential: they constructed $\psi$ supported on highly composite integers such that
\[
\sum_{n=1}^\infty \psi(n) = \infty, \qquad
\cL^1(\mathcal{W}_0(\psi)) = 0.
\]
Duffin and Schaeffer conjectured that for almost all $\alpha,$ the inequality $|n\alpha-m|<\psi(n)$ has infinitely many coprime solutions $(n,m)$ almost surely if and only if 
$$\sum_{n=1}^{\infty}\frac{\phi(n)}{n}\psi(n)=\infty,$$ 
where $\phi$ is Euler's totient function.  Following important contributions by Gallagher \cite{G61}, Erd\H{o}s \cite{E70}, Vaaler \cite{V78}, and Pollington--Vaughan~\cite{PV90}, the conjecture was solved affirmatively by Koukoulopoulos and Maynard \cite{KM20}. A quantitative version was later given by Aistleitner \emph{et al.} \cite{ABH23},  with a subsequent refinements to the error term by Hauke--Vazquez Saez--Walker \cite{HSW24} and an almost-sharp estimate by Koukoulopoulos--Maynard--Yang \cite{KMY25}.

An inhomogeneous generalisation of Khintchine's theorem was established by Sz\"usz \cite{S58}. 
The non-monotonic scenario is more complicated. Counterexamples to relaxing monotonicity were given in \cite{CHPR25, R17}. Despite rumours of its falsity, the most direct analogue of the Duffin--Schaeffer conjecture remains formally open. The so-called weak Duffin--Schaeffer conjecture also remains open, see \cite{BHV24}. The latter was solved for a wide class of non-monotonic functions in \cite{Cho2018, CT2019, CT24a}. Results with extra divergence were established by Yu \cite{Y19, Y21}.
For systems of linear forms, see \cite{AR2023, Kim25, Y21}. 

Under the assumption that $\psi$ is monotonic, 
Jarn\'{i}k's theorem \cite{J31} implies that
$$\dim_{\rm H}W_{\beta}(\psi)=\frac{2}{\tau+1}, \ \ \text{where } \tau=\liminf_{n\to\infty}\frac{-\log \psi(n)}{\log n}.$$
This result can also be derived from Khintchine's theorem via the mass transference principle of Beresnevich and Velani \cite{BV06}. For a general function $\psi,$ the Hausdorff dimension of  $W_{\beta}(\psi)$ was studied in depth by Hinokuma and Shiga~\cite{HS96}.

\subsection{Twisted Diophantine approximation}

The twisted framework differs from classical inhomogeneous Diophantine approximation. Here, we consider the distribution of the Kronecker sequence 
$(n\alpha \mmod 1)_{n\in\mathbb{N}}$
on the unit interval. The diophantine properties of $\alp$ strongly influence this
distribution \cite{Bec1994, CT}. A natural generalization is to replace the upper bound $\frac{1}{4n}$ in (\ref{Minkowski}) by a general non-increasing function  $\psi\colon \mathbb{N} \to (0,\frac{1}{2}),$ investigating the set
\begin{equation*}
    \mathcal{T}_{\alpha}(\psi):=\left\{ \beta\in [0,1]\colon |\!|n\alpha - \beta|\!|<\psi(n) \quad \text{ i.m. } n\in\mathbb{N} \right\}.
\end{equation*}
In 1955, Kurzweil made a foundational contribution to this question. Recall that an irrational $\alpha$ is \emph{badly approximable} if there exists a constant $c(\alpha)>0$ such that
\begin{equation*}
    |\!|n\alpha|\!| \geq \frac{c(\alpha)}{n} \quad \text{ for all } n\in\mathbb{N}.
\end{equation*}

\begin{thm}[Kurzweil \cite{Kur55}]
\label{kurzweil}
Let $\psi\colon \mathbb{N}\to (0,\frac{1}{2})$ be non-increasing and $\alpha\in\mathbb{R}$ irrational. Then $\mathcal{L}^{1}(\mathcal{T}_{\alpha}(\psi))\in\{0,1\}$. Moreover, if $\alpha$ is badly approximable, then
   \begin{equation*}
\mathcal{L}^{1}(\mathcal{T}_{\alpha}(\psi))=\begin{cases}
   0,   & \text{if $\sum_{n=1}^{\infty}\psi(n)<\infty$}, \\[2mm]
       1 , & \text{if $\sum_{n=1}^{\infty}\psi(n)=\infty$}.
\end{cases}
\end{equation*} 
\end{thm}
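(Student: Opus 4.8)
The plan is to prove the three assertions separately, with only the last requiring the badly approximable hypothesis.

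\textbf{(Zero--one law and convergence.)} Write $E_n=\{\beta\in[0,1):\|n\alpha-\beta\|<\psi(n)\}$, the open arc of length $2\psi(n)$ centred at $\{n\alpha\}$, so that $\mathcal{T}_\alpha(\psi)=\limsup_n E_n$ is a $G_\delta$ set, hence Lebesgue measurable. Monotonicity of $\psi$ makes $\mathcal{T}_\alpha(\psi)$ almost invariant under the rotation $R\colon x\mapsto x+\alpha\pmod 1$: if $\|m\alpha-\beta\|<\psi(m)$ infinitely often then, setting $n=m-1$, one has $\|n\alpha-(\beta-\alpha)\|=\|m\alpha-\beta\|<\psi(m)=\psi(n+1)\le\psi(n)$, so $\beta-\alpha\in\mathcal{T}_\alpha(\psi)$; thus $\mathcal{T}_\alpha(\psi)\subseteq\mathcal{T}_\alpha(\psi)+\alpha$, and since the two sides have the same finite measure they agree up to a null set. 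As $R$ is ergodic for Lebesgue measure (because $\alpha$ is irrational), $\mathcal{L}^1(\mathcal{T}_\alpha(\psi))\in\{0,1\}$, valid for every irrational $\alpha$. If moreover $\sum_n\psi(n)<\infty$, then $\sum_n\mathcal{L}^1(E_n)=2\sum_n\psi(n)<\infty$, and the convergence Borel--Cantelli lemma gives $\mathcal{L}^1(\mathcal{T}_\alpha(\psi))=0$, again for every irrational $\alpha$.

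\textbf{(Divergence for $\alpha$ badly approximable.)} By the zero--one law it is enough to show $\mathcal{L}^1(\mathcal{T}_\alpha(\psi))>0$, which I would do by a second-moment/quasi-independence argument whose only arithmetic inputs come from $\alpha$ having bounded partial quotients: (i) for every $\delta>0$ and every interval $\mathcal{I}\subseteq\N$ of length $L$, $\#\{\ell\in\mathcal{I}:\|\ell\alpha\|<\delta\}\ll_\alpha 1+L\delta$; and (ii) by the three-distance theorem the points $\{n\alpha:1\le n\le N\}$ are, uniformly in $N$, both $\asymp 1/N$-separated and $\asymp 1/N$-dense (implied constants depending on $\alpha$). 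Partition $\N$ (beyond a fixed point) into consecutive maximal blocks $I_j$ on each of which $\psi$ varies by at most a factor of $2$; these are finite and, setting $\mu_j:=\sum_{n\in I_j}\psi(n)$, one has $\sum_j\mu_j=\sum_n\psi(n)=\infty$. Put $\rho_j:=\psi(\max I_j)$ (so $\mu_j\asymp|I_j|\rho_j$) and $F_j:=\bigcup_{n\in I_j}E_n$. By (ii) the arcs $E_n$, $n\in I_j$, have comparable radius $\asymp\rho_j$ and $\asymp 1/|I_j|$-separated centres, so a short geometric computation gives $\mathcal{L}^1(F_j)\asymp\min(\mu_j,1)$; in particular $\sum_j\mathcal{L}^1(F_j)=\infty$. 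For $j\ne j'$ I would bound $\mathcal{L}^1(F_j\cap F_{j'})\le\sum_{m\in I_j,\,n\in I_{j'}}\mathcal{L}^1(E_m\cap E_n)$, use that $\mathcal{L}^1(E_m\cap E_n)$ is at most twice $\min(\psi(m),\psi(n))$ and vanishes unless $\|(m-n)\alpha\|<\psi(m)+\psi(n)$, and count the surviving pairs through the difference $\ell=m-n$, which runs over an interval of length $\asymp\max(|I_j|,|I_{j'}|)$ with at most $\min(|I_j|,|I_{j'}|)$ preimages, applying (i). Summing the resulting estimate over $j,j'\le J$ — the key point being that its excess over $\mathcal{L}^1(F_j)\mathcal{L}^1(F_{j'})$ decays geometrically in $|j-j'|$ — yields $\sum_{j,j'\le J}\mathcal{L}^1(F_j\cap F_{j'})\ll\big(\sum_{j\le J}\mathcal{L}^1(F_j)\big)^2$. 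The Chung--Erd\H{o}s inequality (divergence Borel--Cantelli) then forces $\mathcal{L}^1(\limsup_j F_j)>0$, whence $\mathcal{L}^1(\mathcal{T}_\alpha(\psi))>0$, and so $\mathcal{L}^1(\mathcal{T}_\alpha(\psi))=1$.

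I expect the cross-block overlap bound to be the main obstacle: for blocks whose scales $\rho_j,\rho_{j'}$ are within a bounded factor (in particular neighbouring blocks) the naive pairwise estimate is too weak, and the true saving must be extracted via the difference-pair count above together with a summation ordered by block distance. This is precisely where badly approximability is indispensable, since (i) and (ii) — which quantify that the orbit $(n\alpha)$ does not cluster — fail for general irrational $\alpha$, and correspondingly the dichotomy is not claimed for such $\alpha$. Alternatively, one can route the divergence half through the ubiquity framework of Beresnevich--Dickinson--Velani: property (ii) exhibits the resonant points $\{n\alpha\}$ as a locally ubiquitous system at scale $\asymp 1/N$, and the divergence part of the ubiquity machinery turns $\sum_n\psi(n)=\infty$ directly into $\mathcal{L}^1(\mathcal{T}_\alpha(\psi))=1$ (and, as the paper will use, into the Jarn\'{i}k-type statement).
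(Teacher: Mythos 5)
The paper itself gives no proof of this statement: Theorem \ref{kurzweil} is quoted from Kurzweil \cite{Kur55} purely as background, so there is no in-paper argument to compare yours against, and I can only judge your proposal on its own terms. Your first two assertions are proved correctly and completely: the shift argument $\beta\in\mathcal{T}_{\alpha}(\psi)\Rightarrow\beta-\alpha\in\mathcal{T}_{\alpha}(\psi)$ (this is where monotonicity of $\psi$ enters) shows $\mathcal{T}_{\alpha}(\psi)$ is invariant modulo null sets under the ergodic irrational rotation, giving the zero--one law, and the convergence half is immediate from Borel--Cantelli.

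The divergence half, however, is a sketch rather than a proof, and the two places you wave at are exactly where work remains. First, the claimed geometric decay of the cross-block correlations in $|j-j'|$ is true but never extracted: the point is that maximality of the factor-$2$ blocks forces $\psi(\min I_{j+1})<\tfrac12\psi(\min I_j)$, hence $\rho_{j'}\ll 2^{j-j'}\rho_j$ for $j<j'$; feeding this into your difference count (i) gives $\mathcal{L}^1(F_j\cap F_{j'})\ll 2^{j-j'}\mu_j+\mu_j\mu_{j'}$, which is what makes the second moment close. Second, the reduction $\mathcal{L}^1(F_j\cap F_{j'})\le\sum_{m\in I_j,\,n\in I_{j'}}\mathcal{L}^1(E_m\cap E_n)$ is too lossy when some $\mu_j\ge 1$: the resulting error $\sum_j\mu_j$ can then dominate $\bigl(\sum_j\min(\mu_j,1)\bigr)^2$, so as written your final inequality fails in that regime. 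You must either dispose of the case of infinitely many blocks with $\mu_j\ge1$ separately (reverse Fatou already gives $\mathcal{L}^1(\limsup_j F_j)>0$ there) or bound such pairs trivially by $\min\{\mathcal{L}^1(F_j),\mathcal{L}^1(F_{j'})\}\ll\mathcal{L}^1(F_j)\mathcal{L}^1(F_{j'})$. You should also record that $\limsup_j F_j=\limsup_n E_n$ because each block is finite (if $\psi$ does not tend to $0$ the theorem is trivial), and that your blockwise lower bound $\mathcal{L}^1(F_j)\gg\min(\mu_j,1)$ follows from the $\gg 1/|I_j|$ separation of the centres via a bounded-multiplicity argument. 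With these repairs the second-moment route does go through; alternatively, the ubiquity route you mention in closing (bad approximability makes $\{n\alpha\}_{n\le N}$ locally ubiquitous at scale $\asymp 1/N$, then the Beresnevich--Dickinson--Velani divergence theorem plus Cauchy condensation, using monotonicity of $\psi$) is a clean and complete way to finish, and is closer to how this half is usually reproved in the modern literature.
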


Kurzweil also showed that the divergence part of the statement fails whenever $\alp$ is not badly approximable. Nonetheless, 
other irrational $\alp$ can still be studied. Fuchs and Kim \cite[Theorem 1.2]{FuchsKim} refined Kurzweil's theorem by incorporating information about the continued fraction convergents of $\alpha$, and Simmons made further progress in \cite{Simmons15}. When $\mathcal{T}_{\alpha}(\psi)$ has zero Lebesgue measure ---as occurs for $\psi_{\tau}(n)=n^{-\tau}$ with $\tau>1$ --- finer geometric tools are needed to distinguish such null sets. In this direction, Bugeaud \cite[Theorem 1]{B03} and Schmeling--Troubetzkoy \cite[Theorem 3.2]{SchmTrot03} independently obtained the following result.

\begin{thm}[{\cite{B03, SchmTrot03}}]\label{B-th}
For $\alpha\in\mathbb{R}\backslash\mathbb{Q}$ and $\tau>1,$ we have
\begin{equation*}
    \dim_{\mathrm H} \mathcal{T}_{\alpha}(\psi_{\tau})=\frac{1}{\tau},
\end{equation*}
where $\dim_{\mathrm H}$ denotes Hausdorff dimension.
\end{thm}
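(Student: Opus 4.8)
The plan is to prove the two inequalities $\dim_{\mathrm H}\mathcal{T}_{\alpha}(\psi_{\tau})\le 1/\tau$ and $\dim_{\mathrm H}\mathcal{T}_{\alpha}(\psi_{\tau})\ge 1/\tau$ separately, using only the fact that $\alpha$ is irrational in the form that its continued fraction denominators $q_k$ tend to infinity (for rational $\alpha$ the set is finite, so irrationality is genuinely needed). For the upper bound there is nothing arithmetic to do: writing $I_n$ for the arc of length $2n^{-\tau}$ centred at $n\alpha\ (\mathrm{mod}\ 1)$ in $\mathbb{R}/\mathbb{Z}$, we have $\mathcal{T}_{\alpha}(\psi_{\tau})\subseteq\bigcup_{n\ge N}I_n$ for every $N$, so for any $s>1/\tau$ the sum $\sum_{n\ge N}(2n^{-\tau})^{s}$ tends to $0$ as $N\to\infty$ (since $\tau s>1$) and the covers have vanishing mesh; hence $\mathcal{H}^{s}(\mathcal{T}_{\alpha}(\psi_{\tau}))=0$ and $\dim_{\mathrm H}\mathcal{T}_{\alpha}(\psi_{\tau})\le 1/\tau$.

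The substance is the lower bound. Fix $\eps\in(0,1/\tau)$, set $s=1/\tau-\eps$, and aim to build a compact set $K=K_{\eps}\subseteq\mathcal{T}_{\alpha}(\psi_{\tau})$ with $\mathcal{H}^{s}(K)>0$. First I would choose a rapidly growing subsequence $q_{k_{1}}<q_{k_{2}}<\cdots$ of convergent denominators of $\alpha$, exploiting the freedom to \emph{skip} convergents so that each $q_{k_{j+1}}$ can be taken as large as we please relative to $q_{k_1},\dots,q_{k_j}$; this freedom is what replaces the Dirichlet-type regularity available only for badly approximable $\alpha$. The Cantor construction then runs as follows. Level $0$ is $[0,1]$; given at level $j\ge 1$ a family of disjoint arcs of common length $\ell_{j}\asymp q_{k_{j}}^{-\tau}$, each of the form $B(n\alpha,(2q_{k_{j}})^{-\tau})$ with $q_{k_{j}}<n\le 2q_{k_{j}}$, we refine using the integers $n\in\{q_{k_{j+1}}+1,\dots,2q_{k_{j+1}}\}$. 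The crucial input is the three-distance theorem applied to the \emph{rotated} orbit block $\{(q_{k_{j+1}}+m)\alpha:1\le m\le q_{k_{j+1}}\}=q_{k_{j+1}}\alpha+\{m\alpha:1\le m\le q_{k_{j+1}}\}$: these $q_{k_{j+1}}$ points are $O(1/q_{k_{j+1}})$-dense in $\mathbb{R}/\mathbb{Z}$ no matter how irregular the partial quotients of $\alpha$ are, while every one of them has index $n>q_{k_{j+1}}$, hence radius $n^{-\tau}\le q_{k_{j+1}}^{-\tau}$. Inside each level-$j$ arc $I$ I would select a maximal $\asymp q_{k_{j+1}}^{-1}$-separated subset of those points lying well inside $I$, and declare the corresponding arcs $B(n\alpha,(2q_{k_{j+1}})^{-\tau})\subseteq B(n\alpha,n^{-\tau})$ to be the children of $I$; this produces a common branching number $N_{j+1}\asymp\ell_{j}q_{k_{j+1}}$ and new length $\ell_{j+1}\asymp q_{k_{j+1}}^{-\tau}$. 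Since the indices used at level $j$ exceed $q_{k_{j}}\to\infty$, every point $\beta$ of $K:=\bigcap_{j}(\text{level-}j\text{ union})$ satisfies $\|n\alpha-\beta\|<n^{-\tau}$ for infinitely many $n$, so $K\subseteq\mathcal{T}_{\alpha}(\psi_{\tau})$.

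Next I would put the uniform Cantor measure $\mu$ on $K$ (each level-$j$ arc carrying mass $(N_{1}\cdots N_{j})^{-1}$) and run a standard two-scale estimate. For $\ell_{j+1}\le r<\ell_{j}$ the ball $B(x,r)$ meets boundedly many level-$j$ arcs, inside which the level-$(j+1)$ arcs are $\asymp q_{k_{j+1}}^{-1}$-separated, so $\mu(B(x,r))\lesssim(rq_{k_{j+1}}+1)(N_{1}\cdots N_{j+1})^{-1}$; substituting $N_{j+1}\asymp\ell_{j}q_{k_{j+1}}$ one checks that $\mu(B(x,r))\lesssim r^{s}$ holds for all small $r$ as soon as $\ell_{j}^{-s}\lesssim N_{1}\cdots N_{j}$ for every $j$. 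Because $N_{1}\cdots N_{j}\asymp q_{k_{j}}\prod_{i<j}q_{k_{i}}^{1-\tau}$ and $\ell_{j}\asymp q_{k_{j}}^{-\tau}$, this reduces exactly to the requirement $q_{k_{j}}^{\eps\tau}\gtrsim\prod_{i<j}q_{k_{i}}^{\tau-1}$, which the inductive choice of the $q_{k_{j}}$ arranges (each is chosen last and can be taken arbitrarily large). The mass distribution principle then gives $\mathcal{H}^{s}(K)>0$, hence $\dim_{\mathrm H}\mathcal{T}_{\alpha}(\psi_{\tau})\ge s=1/\tau-\eps$, and letting $\eps\to 0$ completes the argument.

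The only genuine obstacle is the lower bound for Liouville-type $\alpha$: the full orbit $(n\alpha)_{n\le N}$ need not be $O(1/N)$-dense, so one cannot simply take $N$ to be a fixed power of $\ell_{j}^{-1}$. The remedy above is to slave all scales to convergent denominators and to use the three-distance theorem on translated orbit blocks of length exactly $q_{k_{j}}$, where the gap structure is completely controlled; everything else is bookkeeping. (When $\alpha$ is badly approximable the lower bound is immediate instead: Theorem~\ref{kurzweil} gives $\mathcal{L}^{1}(\mathcal{T}_{\alpha}(\psi))=1$ for every non-increasing $\psi$ with $\sum\psi=\infty$, so applying the mass transference principle~\cite{BV06} to $\psi(n)=n^{-\tau s}$ with $s<1/\tau$ yields $\mathcal{H}^{s}(\mathcal{T}_{\alpha}(\psi_{\tau}))=\infty$.)
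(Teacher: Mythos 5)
Theorem~\ref{B-th} is stated in the paper as a quoted result with no proof supplied: it is attributed to Bugeaud \cite{B03} and Schmeling--Troubetzkoy \cite{SchmTrot03}, so your argument can only be compared with the cited literature. On its merits, your proposal is essentially correct and in fact reconstructs the standard proof. The upper bound by covering with the tails $\bigcup_{n\ge N}I_n$ is exactly as expected. For the lower bound, your Cantor construction anchored at a sparse subsequence of convergent denominators, with the three-distance theorem applied to the translated block $\{(q+m)\alpha:1\le m\le q\}$ to get $O(1/q)$-density regardless of the partial quotients, is the same mechanism used in \cite{B03}: slaving every scale to denominators $q_{k_j}$ chosen inductively (and arbitrarily large) is precisely what circumvents the Liouville obstruction, and your bookkeeping checks out --- with $N_1\cdots N_j\asymp q_{k_j}\prod_{i<j}q_{k_i}^{1-\tau}$ and $\ell_j\asymp q_{k_j}^{-\tau}$, the H\"older condition $\mu(B(x,r))\ll r^{s}$ at all scales does reduce to $\prod_{i<j}q_{k_i}^{\tau-1}\ll q_{k_j}^{\eps\tau}$, which the inductive choice arranges, and then Lemma~\ref{p:MDP} gives $\mathcal{H}^{s}(K)>0$. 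Only routine tightening is needed: take the child radius strictly smaller than $(2q_{k_{j+1}})^{-\tau}$ (or exclude $n=2q_{k_{j+1}}$) so that the inequality $\|n\alpha-\beta\|<n^{-\tau}$ is strict; note that the level-$j$ arcs are $\asymp q_{k_j}^{-1}$-separated while of length $\asymp q_{k_j}^{-\tau}\ll q_{k_j}^{-1}$, which is what justifies ``boundedly many level-$j$ arcs'' in the two-scale estimate; and make sure the implied constants in $\mu(B(x,r))\ll r^{s}$ are uniform in $j$, which they are because the only level-dependent requirement $\ell_j^{-s}\le N_1\cdots N_j$ can be enforced with constant $1$ at each step. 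Your parenthetical shortcut for badly approximable $\alpha$ via Theorem~\ref{kurzweil} and the mass transference principle \cite{BV06} is also valid, though of course not needed once the general construction is in place.
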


We note that an early result in this direction was established by Bernik and Dodson \cite[p. 105]{BernikDodson}, who proved a version of the above statement for Lebesgue almost all $\alpha\in[0,1]$.  Subsequent work has extended this result in several directions, including generalisations to wider classes of approximation functions $\psi$ and restricted sets of $\alpha,$ where the exponent $\tau$ in the dimension formula is replaced by the lower order at infinity of $\psi$, see \cite[Theorem 3]{FW06}. Notably, it was also shown in \cite[Theorem 2]{FW06} that there exist irrationals $\alpha$ for which the expected dimension result fails to hold. For more about twisted Diophantine approximation, we refer to \cite{BSV25, CT24b, CZ21, HR25, HW24, KP25}.

\subsection{Dynamical Diophantine approximation}
Let $T_{\alpha}: x \mapsto x+\alpha$ denote the rotation on the unit circle. Minkowski's theorem
studies the approximation properties of the orbit $\{T_{\alpha}^{n}(0)\}_{n\ge1},$ motivating the general problem of quantifying the rate at which orbits in a dynamical system approach a target with a prescribed speed. Let $(X,T,\mu)$ be a measure-preserving dynamical system, where $(X,d)$ is a metric space, $T\colon X\to X$ is a Borel transformation, and $\mu$ is a $T$-invariant Borel probability measure on $X.$ By Birkhoff's ergodic theorem \cite{W82}, when the system is ergodic, the set 
$$\left\{\alpha\in X\colon \liminf_{n\to\infty}d(T^{n}(\alpha),\beta)=0\right\}$$
has full $\mu$-measure for $\mu$-almost every $\beta\in X.$  The gives a qualitative description of orbit distribution, analogous to the density of rationals in $\mathbb{R}$, and naturally leads to quantitative refinements.

The classical \emph{shrinking target problem} in dynamical system $(X,T)$ provides a quantitative refinement of Birkhoff's ergodic theorem. It concerns the size of the set
$$\mathcal{W}_{\beta}(T,(E_n))=\{\alpha\in X\colon T^{n}(\alpha)\in \beta+E_n \text{ for infinitely many } n\in\N\},$$
where $(E_n)_{n\ge 1}$ is a sequence of subsets of $X$ and $\beta\in X.$ This concept, introduced by Hill and Velani \cite{HV95}, generalises the framework of metric Diophantine approximation to a dynamical setting. A direct application of the first Borel--Cantelli lemma shows that
\begin{equation}
\label{shrinking}
\mu(\mathcal{W}_{\beta}(T,(E_n)))=0 \ \text { if } \ \sum_{n=1}^{\infty}\mu(\{\alpha\in X\colon T^{n}(\alpha)\in \beta+E_n\})<\infty.\end{equation}
Two natural questions then arise$\colon$
\begin{itemize}
\item What is  $\mu\big(\mathcal{W}_{\beta}(T,(E_n))\big)$ when the above series (\ref{shrinking}) diverges?

\item What is the Hausdorff measure/dimension of $\mathcal{W}_{\beta}(T,(E_n))$ when it is  $\mu$-null?
\end{itemize}
Since its introduction, the set $\mathcal{W}_{\beta}(T,(E_n))$ and its variants have been studied in many contexts. These include mixing systems, automorphisms, beta-transformations, Gauss maps, and fractal-restricted targets \cite{AJW25,D25, LLSV25, LLVWZ25, LWWX14,  P67,SW13,WZ21}.

Let $A$ be an integral matrix defining an endomorphism 
\[
T:
\mathbb{R}^{d}/\mathbb{Z}^{d}
\to \mathbb{R}^{d}/\mathbb{Z}^{d},
\qquad
T({\bf x}) = A{\bf x}~\!\!\!\!\pmod1.
\]
In recent work, Li \emph{et al.} \cite{LLVZ23} revisited the shrinking target problem for such toral endomorphisms, originally studied by Hill and Velani \cite{HV99}. For broad class of target sets  $(E_n)_{n\ge1}$ --- including metric balls, rectangular regions, and hyperboloidal neighbourhoods in algebraic varieties --- they determined the Lebesgue measure and Hausdorff dimension of the shrinking target set $\mathcal{W}_{\beta}(T,(E_n)),$ under the so-called summable-mixing property. Their results provide quantitative refinements beyond the classical zero–full law, substantially extending the dichotomy established  in \cite{HV99}. Li \emph{et al.} also allowed the matrix to have real coefficients.

For $i=1,2,\ldots,d$, let $r_i\colon \N \to (0,\frac{1}{2})$, and define ${\bf r} =(r_1,\ldots,r_d).$ For each $n\in \N,$ define the target set
\begin{equation*}
 \mathcal{R}_n({\bf r}):=\{{\bf x}\in [0,1)^{d}\colon |\!|x_i|\!|\le r_i(n),\ 1\le i\le d\}.
\end{equation*}
For ${\bm \beta}=(\beta_1,\ldots,\beta_d)\in [0,1)^{d},$ the shifted set ${\bm \beta}+ \mathcal{R}_{n}({\bf r})$ is a box centred at ${\bm \beta}$. Let $\sigma(A)$ denote the smallest singular value of a matrix $A$, i.e.,
$$\sigma(A):=\inf\big\{|\!|Ax|\!|_{2}\colon |\!|x|\!|_2=1\big\}.$$
For ${\bf x}=(x_1,\ldots,x_d)\in\mathbb{R}^{d},$ write $|{\bf x}|_{\infty}=\max_{1\le i\le d}|x_i|.$
The Fourier transform of a non-atomic probability measure $\mu$ is given by
$$\widehat{\mu}({\bf t})
:=\int e^{-2\pi i\langle{\bf t},{\bf x}\rangle}{\rm d}\mu({\bf x}) \ \ \ \ ({\bf t}\in \mathbb{R}^{d}),$$
where the scalar product $\langle{\bf t},{\bf x}\rangle$ is defined by
$$\langle{\bf t},{\bf x}\rangle=t_1x_1+\cdots+t_dx_d.$$
Recently, Tan and Zhou \cite{TZ25} extended the Lebesgue-measure analysis in \cite{LLVZ23} to general Fourier-decaying measures. Let $\mathcal{A}=(A_n)_{n\ge 1}$ be a sequence of expanding integral matrices --- meaning that the absolute determinant exceeds 1 --- and fix ${\bm \beta}\in[0,1)^{d}.$
They studied the lim sup set
\begin{equation*}
\mathcal{W}_{\bm {\beta}}(\mathcal{A},{\bf r}):= \left\{ {\bm \alpha} \in [0,1)^{d} \colon A_n {\bm {\alpha}} \in {\bm {\beta}}+\mathcal{R}_n({\bf r}) \!\!\!\!\!\pmod{1} \text{ i.m. } n \in \N \right\}.
\end{equation*}

\begin{thm}[{\cite[Theorem 1.10]{TZ25}}]
\label{TZ}
Let $\br: \bN \to (0,1/2)^d$.
Let $\mu$ be a probability measure supported on  $[0,1)^{d}$. Let $\mathcal{A} = (A_n)_{n \ge 1}$ be a sequence of expanding integral matrices such that $\sigma(A_{n+1}A_{n}^{-1}) \ge K>1 ~ (n \ge 0),$
where $A_0$ is the identity matrix in $d$ dimensions. Suppose there exists $s>d+1$ such that
$$\widehat{\mu}({\bf t})=O\Big((\log |{\bf t}|_{\infty})^{-s}\Big) ~~\text{as}~  |{\bf t}|_{\infty} \to \infty.$$
Then \begin{equation*}
\mu(\mathcal{W}_{\bm {\beta}}(\mathcal{A},{\bf r}))=\begin{cases}
   0,   & \text{if } \sum_{n=1}^{\infty}\prod_{i=1}^{d}r_i(n)<\infty, \\
    1,  & \text{if } \sum_{n=1}^{\infty}\prod_{i=1}^{d}r_i(n)=\infty.
\end{cases}
\end{equation*}
\end{thm}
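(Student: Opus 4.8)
Write $E_n := \{\bm\alpha \in [0,1)^d : A_n\bm\alpha \in \bm\beta + \mathcal{R}_n(\br) \pmod 1\}$, so that $\mathcal{W}_{\bm\beta}(\mathcal{A},\br) = \limsup_n E_n$; the plan is to run the convergence and divergence Borel--Cantelli lemmas, with the Fourier-decay hypothesis entering through estimates for $\mu(E_n)$ and $\mu(E_m\cap E_n)$. Two elementary facts about $\mathcal{A}$ are used throughout. Since $\sigma(XY)\ge\sigma(X)\sigma(Y)$ for square real matrices, the hypothesis gives $\sigma(A_n)\ge K^n$, and for $m<n$ also $\sigma(A_nA_m^{-1})\ge K^{n-m}$, equivalently $\|A_mA_n^{-1}\|_2\le K^{-(n-m)}$. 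And since $A_n$ is integral, $\int e^{2\pi i\langle\mathbf k,A_n\bm\alpha\rangle}\,\mathrm d\mu=\widehat\mu(-A_n^{T}\mathbf k)$, while for $\mathbf k\in\bZ^d\setminus\{\mathbf 0\}$ the integer vector $A_n^{T}\mathbf k$ satisfies $|A_n^{T}\mathbf k|_\infty\ge\sigma(A_n)\|\mathbf k\|_2/\sqrt d\ge K^n\|\mathbf k\|_\infty/\sqrt d$. To linearise the set membership I smooth the targets: fixing a small $\eps>0$, choose $C^\infty$ functions $\psi_n^-\le\mathbbm 1_{\bm\beta+\mathcal{R}_n(\br)}\le\psi_n^+$ on $\mathbb{T}^d$, each a product over $i=1,\dots,d$ of one-dimensional bumps, the $i$-th factor approximating the indicator of the $i$-th side and mollified at scale $\eps\, r_i(n)$. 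Then $\int\psi_n^\pm=\prod_i 2r_i(n)\cdot(1+O_d(\eps))$ and, crucially, because each coordinate is mollified at a scale proportional to its own width, the Fourier $\ell^1$-norms are bounded uniformly, $\sum_{\mathbf k\in\bZ^d}|\widehat{\psi_n^\pm}(\mathbf k)|\ll_d(\log(1/\eps))^d$, independently of $n$ and $\br$.

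\emph{Convergence.} From $\mathbbm 1_{E_n}(\bm\alpha)\le\psi_n^+(A_n\bm\alpha)$ and the (absolutely convergent) Fourier expansion, $\mu(E_n)\le\sum_{\mathbf k}\widehat{\psi_n^+}(\mathbf k)\,\widehat\mu(-A_n^{T}\mathbf k)$. The $\mathbf k=\mathbf 0$ term is $\prod_i 2r_i(n)\cdot(1+O_d(\eps))$; in every other term $|A_n^{T}\mathbf k|_\infty\ge K^n/\sqrt d$, so the decay hypothesis yields $|\widehat\mu(-A_n^{T}\mathbf k)|\ll(\log K^n)^{-s}\ll n^{-s}$ for large $n$, uniformly in $\mathbf k\ne\mathbf 0$. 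Hence $\mu(E_n)\ll_{d,\eps}\prod_i r_i(n)+n^{-s}$, which is summable when $\sum_n\prod_i r_i(n)<\infty$ (recall $s>d+1>1$); the first Borel--Cantelli lemma gives $\mu(\mathcal{W}_{\bm\beta}(\mathcal{A},\br))=0$.

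\emph{Divergence.} The lower approximant gives $\mu(E_n)\gg_{d,\eps}\prod_i r_i(n)-n^{-s}$ (fix $\eps$ small so that $1-O_d(\eps)>0$), hence $\sum_n\mu(E_n)=\infty$. The crux is a quasi-independence estimate. Expanding $\mu(E_m\cap E_n)\le\int\psi_m^+(A_m\bm\alpha)\psi_n^+(A_n\bm\alpha)\,\mathrm d\mu=\sum_{\mathbf k,\mathbf l}\widehat{\psi_m^+}(\mathbf k)\widehat{\psi_n^+}(\mathbf l)\,\widehat\mu\big(-(A_m^{T}\mathbf k+A_n^{T}\mathbf l)\big)$, the $\mathbf k=\mathbf l=\mathbf 0$ term equals $\prod_i 2r_i(m)\prod_i 2r_i(n)\cdot(1+O_d(\eps))$; the terms with exactly one of $\mathbf k,\mathbf l$ zero contribute $\ll_{d,\eps}(\prod_i r_i(n))\,m^{-s}+(\prod_i r_i(m))\,n^{-s}$ as above; and for $\mathbf k,\mathbf l\ne\mathbf 0$ one writes $A_m^{T}\mathbf k+A_n^{T}\mathbf l=A_n^{T}\big(\mathbf l+(A_mA_n^{-1})^{T}\mathbf k\big)$ and uses $\|A_mA_n^{-1}\|_2\le K^{-(n-m)}$: whenever $\|\mathbf k\|_2$ is small relative to $K^{n-m}$, the perturbation $(A_mA_n^{-1})^{T}\mathbf k$ is shorter than $\tfrac12$, forcing $|A_m^{T}\mathbf k+A_n^{T}\mathbf l|_\infty\gg K^n$ and hence $n^{-s}$ decay. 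It remains to control the ``resonant'' range (large $\mathbf k$, or $n$ close to $m$); granting that it contributes, after summation over $m<n\le N$, only $o\big((\sum_{n\le N}\prod_i r_i(n))^2\big)$ (with implied constant allowed to depend on $\eps$), one obtains $\sum_{m,n\le N}\mu(E_m\cap E_n)\le(1+O_d(\eps)+o(1))\big(\sum_{n\le N}\mu(E_n)\big)^2$, so by the Kochen--Stone inequality $\mu(\mathcal{W}_{\bm\beta}(\mathcal{A},\br))\ge(1+O_d(\eps))^{-1}$; since the set is independent of $\eps$, letting $\eps\to0$ gives full measure, and no separate zero--one law is needed.

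The main obstacle is precisely the resonant range. When some side $r_i(m)$ of the $m$-th box is extremely thin and $n$ lies below the ``mixing scale'' $\asymp\log(1/\prod_i r_i(m))$ of $A_nA_m^{-1}$ relative to the $m$-th partition, the naive Fourier bounds degrade (the high-frequency tail of $\psi_m^+$ is no longer negligible at the relevant scale), and one has to combine the combinatorial picture --- a preimage cell of $E_m$, of diameter $\lesssim K^{-m}$, meets the far finer preimage cells of $E_n$, of diameter $\lesssim K^{-n}$, with multiplicity governed by $|\det(A_nA_m^{-1})|\ge K^{d(n-m)}$ --- with the merely logarithmic Fourier decay of $\mu$, in order to recover an estimate of the clean shape $\mu(E_m\cap E_n)\ll(\prod_i r_i(n))\,\rho(n-m)+(\text{small error})$ with $\sum_{j\ge1}\rho(j)<\infty$, uniformly in $\bm\beta$ and over all admissible $\br$. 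Threading this bookkeeping through at exactly the decay threshold $s>d+1$ is where the real work lies; the convergence half and the Borel--Cantelli scaffolding are routine.
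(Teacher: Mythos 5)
Note first that the paper does not prove this statement at all: it is quoted verbatim from Tan--Zhou \cite[Theorem 1.10]{TZ25}, so there is no in-paper argument to compare with; your attempt must therefore stand on its own. It does not. The convergence half and the general scaffolding (smoothed majorants/minorants $\psi_n^{\pm}$, the bounds $\sigma(A_n)\ge K^n$ and $\|A_mA_n^{-1}\|_2\le K^{-(n-m)}$, Fourier expansion, Kochen--Stone plus the $\eps\to 0$ trick) are fine, but the divergence half rests entirely on an estimate you never prove: you write ``granting that [the resonant range] contributes only $o\big((\sum_{n\le N}\prod_i r_i(n))^2\big)$'' and later concede that this ``is where the real work lies.'' That deferred estimate \emph{is} the theorem. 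Everything surrounding it is routine; the quasi-independence bound in the resonant regime is precisely the content of Tan--Zhou's result and the place where the threshold $s>d+1$ and the merely logarithmic decay of $\widehat\mu$ must be played off against each other.

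To see why the gap is not a formality, note that in the resonant block ($\|\mathbf k\|_2\gtrsim K^{n-m}$, $\mathbf l\ne\mathbf 0$) the frequency $A_m^{T}\mathbf k+A_n^{T}\mathbf l$ can be arbitrarily small and can even vanish (take $\mathbf k=-(A_nA_m^{-1})^{T}\mathbf l$ whenever this is integral), in which case $\widehat\mu$ supplies no decay whatsoever, so one is forced to bound $\sum_{\|\mathbf k\|\gtrsim K^{n-m}}|\widehat{\psi_m^{+}}(\mathbf k)|\cdot\max_{\mathbf l\ne\mathbf 0}|\widehat{\psi_n^{+}}(\mathbf l)|$ using only the lattice separation of $A_n^{T}\mathbb{Z}^d$. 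Since the mollification scale $\eps\,r_i(m)$ may be far below $K^{-(n-m)}$ (the $r_i$ are completely arbitrary in $(0,\tfrac12)$, not even monotone), the tail $\sum_{\|\mathbf k\|\gtrsim K^{n-m}}|\widehat{\psi_m^{+}}(\mathbf k)|$ is not small --- it can be comparable to the full $\ell^1$-norm $\asymp(\log(1/\eps))^d$ --- and the resulting naive bound for the resonant block is $\gg_\eps\prod_i r_i(n)$ per pair $(m,n)$, whose sum over $m<n\le N$ is of order $N\sum_{n\le N}\prod_i r_i(n)$ and in general swamps $\big(\sum_{n\le N}\prod_i r_i(n)\big)^2$ (e.g.\ when $\prod_i r_i(n)\asymp 1/n$). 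So the required cancellation must come from a genuinely finer analysis (dyadic decomposition of the frequencies, counting near-resonances against the polylogarithmic decay of $\widehat\mu$, which is exactly where $s>d+1$ is needed), none of which appears in your write-up. As it stands, the proposal is a correct reduction of the theorem to its hardest step, not a proof.
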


Surprisingly, nothing seems to be known about twisted Diophantine approximation for the matrix-generated dynamical systems. The purpose of this work is to fill this gap.

\subsection{Results}

To state our results, we require some additional notation and terminology. A $d$-tuple of non-increasing functions
$${\bf r} \colon \N\to \left(0,\frac{1}{2}\right)^d$$ is \emph{doubling} if there exists $C>1$ such that for all $1\le i\le d$ and all $n \in \N,$
$$r_{i}(n) \le C r_{i}(2n). $$
Thus, when we write of doubling functions, we implicitly assume their monotonicity.
For ${\bm \alpha}\in[0,1)^{d},$ define 
$$\mathcal{T}_{{\bm \alpha}}(\mathcal{A}, {\bf r})
= \left\{ {\bm \beta} \in [0,1)^{d} \colon A_n {\bm {\alpha}} \in {\bm {\beta}}+\mathcal{R}_n({\bf r}) \!\!\!\!\!\pmod{1} \text{ i.m. } n \in \N \right\}.$$

A \emph{dimension function} is a continuous, non-decreasing function 
\[
f: (0, \infty) \to (0,\infty)
\]
such that $f(\rho) \to 0$ as $\rho\to 0^+.$
For $s\ge 0,$ we write $f\preceq s$ if 
$$\frac{f(y)}{y^{s}}\le \frac{f(x)}{x^{s}} \text { for all }0<x<y.$$
Observe that if $f \preceq s$ then
$$\lim_{\rho\to 0^{+}}\frac{f(\rho)}{\rho^{s}}>0.$$
%If this limit is infinite, then we write $f\prec s.$ The notations $s\preceq f$ and $s\prec f$ are defined similarly.

We now state our first result: a Khintchine-type dichotomy for twisted approximation for general Fourier-decaying measures.

\begin{thm}\label{t1}
Let ${\bf r}\colon \N\to (0,\frac{1}{2})^d$ be doubling, and let $\mu$ be a probability measure on $[0,1)^{d}$.
Let $(A_n)_{n\in\mathbb{N}}$ be a sequence of integral $d \times d$ matrices,
and let $A_0$ be the identity matrix in $d$ dimensions. 
Assume that, for some  $c, s, \eps>0,$ we have
\begin{equation}
\label{sing}
\sigma(A_n-A_m)\ge c|n-m|^{\frac{1}{s}}(\log_+ |n-m|)^{\frac{1}{s}+\eps}
\end{equation}
whenever $m\ne n$. Assume also that 
$$\widehat{\mu}({\bf {t}})=O(|{\bf t}|_{\infty}^{-s}) ~~\text{as}~  |{\bf t}|_{\infty} \to \infty.$$
Then, for $\mu$-almost all ${\bm \alpha}$,
\begin{equation*}
\mathcal{L}^d\big(\mathcal{T}_{{\bm \alpha}}(\mathcal{A}, {\bf r})\big)=\begin{cases}
   0,   & \text{if  $\sum_{n=1}^{\infty}\prod_{i=1}^{d}r_i(n)<\infty$}, \\
    1,  & \text{if  $\sum_{n=1}^{\infty}\prod_{i=1}^{d}r_i(n)=\infty$}.
\end{cases}
\end{equation*}
\end{thm}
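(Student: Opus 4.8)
The plan is to treat the two halves separately, using a Borel--Cantelli strategy in the variable $\bm\beta$ for a fixed (typical) $\bm\alpha$, but recovering the null/full dichotomy from a Fubini-type argument over the pair $(\bm\alpha,\bm\beta)$. Write $B_n(\bm\beta) = \bm\beta + \mathcal R_n(\mathbf r) \pmod 1$, so that $\mathcal L^d(B_n(\bm\beta)) = \prod_i 2r_i(n) =: v_n$, and observe that $A_n\bm\alpha \in B_n(\bm\beta)$ is equivalent to $\bm\beta \in A_n\bm\alpha - \mathcal R_n(\mathbf r) \pmod 1$, a box of the same volume $v_n$ centred at $A_n\bm\alpha$. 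Thus for \emph{every} $\bm\alpha$, $\mathcal T_{\bm\alpha}(\mathcal A,\mathbf r)$ is itself a $\limsup$ set of boxes in the $\bm\beta$-space, and the first Borel--Cantelli lemma gives the convergence case immediately and unconditionally: if $\sum_n v_n < \infty$ then $\mathcal L^d(\mathcal T_{\bm\alpha}(\mathcal A,\mathbf r)) = 0$ for every $\bm\alpha$. (No hypothesis on $\mu$ or on $\mathcal A$ is needed here.)

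For the divergence case the essential point is a quasi-independence estimate. By the Erd\H{o}s--Chung type divergence Borel--Cantelli lemma, it suffices to show that for $\mu$-a.e.\ $\bm\alpha$,
\begin{equation}
\label{eq:QI}
\sum_{m,n \le N} \mathcal L^d\bigl(B_m(\bm\beta)' \cap B_n(\bm\beta)'\bigr) \ll \Bigl(\sum_{n \le N} v_n\Bigr)^2
\end{equation}
holds on a fixed-measure (in fact full-measure) set of $\bm\beta$, where $B_n(\bm\beta)'$ denotes the box $A_n\bm\alpha - \mathcal R_n(\mathbf r)$ in $\bm\beta$-space; the diagonal terms contribute $\sum_n v_n$, so the content is the off-diagonal sum. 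For fixed $m \ne n$, $\mathcal L^d(B_m(\bm\beta)' \cap B_n(\bm\beta)')$ is a function of $\bm\beta$ only through $A_n\bm\alpha - A_m\bm\alpha = (A_n - A_m)\bm\alpha$; integrating over $\bm\beta$ trivially gives $v_m v_n$, but we need this with high probability in $\bm\alpha$, not on average. Here is where the Fourier decay of $\mu$ and the singular-value lower bound \eqref{sing} enter: expand the indicator of the overlap (or rather $\int_{[0,1)^d} \mathbf 1[\bm\beta \in B_m' \cap B_n'] \, d\bm\beta$ as a function of $(A_n-A_m)\bm\alpha$) in a Fourier series on the torus, so that
$$
\int_{[0,1)^d} \mathcal L^d\bigl(B_m(\bm\beta)' \cap B_n(\bm\beta)'\bigr)\, d\mu(\bm\alpha) = v_m v_n + \sum_{\mathbf k \in \mathbb Z^d \setminus \{\mathbf 0\}} c_{\mathbf k}^{(m,n)} \,\widehat\mu\bigl((A_n - A_m)^{\mathsf T}\mathbf k\bigr),
$$
where the Fourier coefficients $c_{\mathbf k}^{(m,n)}$ of the overlap-volume function are bounded by $\min(v_m, v_n) \cdot \prod_i \min(1, r_i(\cdot)/|k_i|)$-type quantities, summable against a mild decay. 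The hypothesis $\widehat\mu(\mathbf t) = O(|\mathbf t|_\infty^{-s})$ combined with $\sigma(A_n - A_m) \ge c|n-m|^{1/s}(\log_+|n-m|)^{1/s+\eps}$ forces $|(A_n-A_m)^{\mathsf T}\mathbf k|_\infty \gg |\mathbf k|_2 \cdot \sigma(A_n-A_m)$ (using that $\sigma(M) = \sigma(M^{\mathsf T})$ and $|M^{\mathsf T}\mathbf k|_\infty \gg |M^{\mathsf T}\mathbf k|_2 \ge \sigma(M)|\mathbf k|_2$), hence each nonzero Fourier term is $O\bigl(|\mathbf k|_2^{-s}\, |n-m|^{-1}(\log_+|n-m|)^{-1-s\eps}\bigr)$; summing over $\mathbf k$ (the $\sum |\mathbf k|_2^{-s}$ converges once $s > d$, which follows from $s > d$ being implicit in the Fourier hypothesis being nontrivial — if not, one first reduces to $|\mathbf k|_\infty$ not too large using the box structure) gives an error of size $O\bigl(|n-m|^{-1}(\log_+|n-m|)^{-1-s\eps}\bigr)$, whose double sum over $m,n \le N$ is $O(N)$, which is $o\bigl((\sum_{n\le N} v_n)^2\bigr)$ when the series diverges, or at worst absorbed. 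A Chebyshev/Borel--Cantelli argument in $\bm\alpha$ along a suitable subsequence $N_j$ then upgrades this $L^1(\mu)$ bound to an almost-sure statement, after which the doubling hypothesis on $\mathbf r$ lets us fill the gaps between $N_j$ and interchange the roles of the variables so that \eqref{eq:QI} holds for $\mu$-a.e.\ $\bm\alpha$ simultaneously for Lebesgue-a.e.\ $\bm\beta$. Applying the divergence Borel--Cantelli lemma in $\bm\beta$ then yields $\mathcal L^d(\mathcal T_{\bm\alpha}(\mathcal A,\mathbf r)) > 0$; a zero--one law for $\limsup$ sets of boxes (or a direct density argument, using that the event is invariant under the relevant translations) promotes this to full measure.

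The main obstacle, and where the care is needed, is the passage from the $L^1(\mu)$ (i.e.\ averaged-over-$\bm\alpha$) quasi-independence estimate to a statement valid for $\mu$-almost every individual $\bm\alpha$: a naive Chebyshev bound loses too much unless one is careful about the exponents, and one must exploit the logarithmic gain $(\log_+|n-m|)^{1/s+\eps}$ precisely to make the resulting series in $j$ (along $N_j$) converge after the second-moment estimate. A secondary technical point is bounding the Fourier coefficients $c_{\mathbf k}^{(m,n)}$ of the overlap function uniformly enough in $m,n$; this is elementary (the overlap of two axis-parallel boxes factorises coordinatewise, and the Fourier coefficient of a one-dimensional interval indicator decays like $1/|k_i|$), but one must track the dependence on the side lengths $r_i(\cdot)$ so that the $\mathbf k$-sum genuinely converges rather than merely being formally manipulated. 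Everything else — the convergence half, the final zero--one law, the reduction using doubling — is routine once \eqref{eq:QI} is in hand.
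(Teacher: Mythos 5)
Your architecture (first Borel--Cantelli for convergence; Fourier-expanded pair correlations for quasi-independence; Chebyshev in ${\bm \alpha}$; Chung--Erd\H{o}s in ${\bm \beta}$) matches the paper's proof up to the last step, but that last step is a genuine gap. You close by invoking ``a zero--one law for limsup sets of boxes'' or ``invariance under the relevant translations'' to promote positive measure to full measure. No such law or invariance is available here: the boxes are centred at the specific points $A_n{\bm \alpha} \bmod 1$, and zero--one laws of Cassels/Gallagher/Kurzweil type rely on structure (rotation dynamics, ranging over all denominators, quasi-invariance under a group of translations) that a general matrix sequence does not provide --- indeed the lack of such a law is precisely why statements like Conjecture \ref{conj} are delicate. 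The paper instead applies the Chung--Erd\H{o}s inequality (Lemma \ref{CE}) \emph{locally} on an arbitrary open set $U$ and then uses the local-to-global Lemma \ref{Full meas}; this requires the counting claim $\sum_{n\le N_k}\mathcal{L}^{d}(E_n\cap U)\gg \mathcal{L}^{d}(U)\sum_{n\le N_k}\mathcal{L}^{d}(E_n)$, and proving it is exactly where two ingredients you never use enter: equidistribution of $(A_n{\bm \alpha})$ modulo $\mathbb{Z}^d$ for $\mu$-a.e.\ ${\bm \alpha}$ (Lemma \ref{equi}, itself a consequence of the Fourier decay and \eqref{sing}), and the doubling of ${\bf r}$ (to compare $\mathcal{L}^{d}(E_n)$ across dyadic blocks of indices). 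Your stated use of doubling (``fill the gaps between $N_j$'') is not the role it plays; without the local step your argument only yields $\mathcal{L}^d(\mathcal{T}_{{\bm \alpha}}(\mathcal{A},{\bf r}))>0$.

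There is also a quantitative error in your quasi-independence bound: after dropping the volume factor you bound the off-diagonal Fourier contribution by $O(N)$ and assert this is $o\bigl((\sum_{n\le N}v_n)^2\bigr)$ ``when the series diverges''. Divergence of $\sum_n v_n$ does not give $\sum_{n\le N}v_n\gg \sqrt{N}$ (take $v_n\asymp 1/n$), so the claim fails. The fix is to retain the factor $\mathcal{L}^{d}(E_n)$ (your $\min(v_m,v_n)$) in front of the Fourier tail, as in Lemma \ref{qusi}; the error is then $O(\sum_{n\le N}v_n)$, which is eventually at most $(\sum_{n\le N}v_n)^2$. Relatedly, no hypothesis $s>d$ is available or needed: convergence of the ${\bf k}$-sum comes from the $\prod_i\min\{1,|k_i|^{-1}\}$ decay of the box coefficients combined with $|{\bf k}|_{\infty}^{-s}$ and the factor $|n-m|^{-1}(\log_+|n-m|)^{-1-s\eps}$ (Lemma \ref{elementary}), valid for every $s>0$. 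Finally, the upgrade to $\mu$-a.e.\ ${\bm \alpha}$ needs no Borel--Cantelli over a subsequence and no use of the logarithmic gain at that stage: Chebyshev at level $p$, a limsup over $N$, the reverse Fatou lemma and a union over $p$ give a full-measure set on which the pair-correlation bound holds along some sequence $N_k\to\infty$, which is all the Chung--Erd\H{o}s step requires.
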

\iffalse
\begin{rem}
The condition \eqref{sing} is weaker than the condition \eqref{sing0}. Indeed, if we assume \eqref{sing0} then, for $m < n$,
we have
\begin{align*}
|\!|(A_n-A_m)^{T}({\bf k})|\!|_{2}&=|\!|A_m^{T}(A_nA_m^{-1}-I)^{T}({\bf k})|\!|_{2}
\\&\ge \sigma(A_m)|\!|(A_nA_m^{-1}-I)^{T}({\bf k})|\!|_{2}\\
&\ge  \sigma(A_m)\sigma(A_nA_m^{-1}-I)|\!|{\bf k}|\!|_{2},
\end{align*}
where $I$ is the identity matrix. By induction, we have $\sig(A_m) \ge K^m$ and
\begin{equation*}
\label{calc}
\sig(A_n A_m^{-1} - I)
\ge \sig(A_n A_m^{-1}) - 1 \ge K^{n-m} - 1,
\end{equation*}
whence
\[
|\!|(A_n-A_m)^{T}({\bf k})|\!|_{2} \ge K^{m}(K^{n-m}-1)|\!|{\bf k}|\!|_{2}.
\]
For $K > 1$, there exists $m_0 \in \mathbb{N}$ such that if $n > m \geq m_0$, 
$$K^{m} \geq 1 + \frac{1}{K^{n-m}-1} \geq \frac{K^{n-m}}{K^{n-m} - 1},$$ which implies $\sigma(A_n - A_m) \geq K^{n-m}$.
   
\end{rem}
\fi

\begin{example} Let $\mu$ be the induced Lebesgue measure on an open ball intersected with a smooth submanifold of $\bR^d$ that is not contained in any affine hyperplane. Then $\mu$ has polynomial Fourier decay rate \cite[\S VIII.3.2]{Ste1993}, and the theorem applies for some $s$.
\end{example}

An application of Fubini's theorem delivers the following observation.
\begin{pro}
\label{p0}
Let ${\bf r}\colon \N\to (0,\frac{1}{2})^d$, and let $\mu\otimes\nu$ be a probability measure on $[0,1)^{d}\times [0,1)^{d}.$ Let $\mathcal{A}=(A_n)_{n\in\mathbb{N}}$ be a sequence of integral $d \times d$ matrices, and let $A_0$ be the identity matrix in $d$ dimensions. Assume that the set 
$$\left\{ ({\bm \alpha},{\bm \beta}) \in [0,1)^{d}\times[0,1)^{d} \colon A_n {\bm {\alpha}} \in {\bm {\beta}}+\mathcal{R}_n({\bf r}) \!\!\!\!\!\pmod{1} \text{ i.m. } n \in \N \right\}$$
has full $\mu\otimes\nu$-measure. Then
for $\mu$-almost all ${\bm \alpha}\in [0,1)^{d}$,  
\begin{equation*}
\nu\big(\mathcal{T}_{{\bm \alpha}}(\mathcal{A}, {\bf r})\big)=\begin{cases}
   0,   & \text{if  $\sum_{n=1}^{\infty}\prod_{i=1}^{d}r_i(n)<\infty$}, \\
    1,  & \text{if  $\sum_{n=1}^{\infty}\prod_{i=1}^{d}r_i(n)=\infty$}.
\end{cases}
\end{equation*}
\end{pro}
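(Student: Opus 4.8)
The plan is to obtain both halves of the dichotomy from Fubini's theorem; the only real work is to set up the relevant $\limsup$ sets and identify their sections correctly. For each $n$ put
$$E_n := \big\{ (\balp,\bbet) \in [0,1)^{d}\times[0,1)^{d} : A_n\balp \in \bbet + \mathcal{R}_n(\br) \pmod{1} \big\}.$$
Since $\mathcal{R}_n(\br)$ is a box, $E_n$ is a finite union of boxes, hence Borel, and the set appearing in the statement is $W := \limsup_{n\to\infty} E_n$, a Borel subset of $[0,1)^{d}\times[0,1)^{d}$. For fixed $\balp$, the $\balp$-section of $E_n$ is the single box $F_n^{\balp} := \big(A_n\balp - \mathcal{R}_n(\br)\big) \pmod 1$, of Lebesgue volume $\prod_{i=1}^{d} 2r_i(n)$, and the $\balp$-section of $W$ is exactly $\limsup_n F_n^{\balp} = \mathcal{T}_{\balp}(\mathcal{A},\br)$. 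Symmetrically, the $\bbet$-section of $E_n$ is $G_n^{\bbet} := \{\balp : A_n\balp \in \bbet + \mathcal{R}_n(\br) \pmod 1\}$; when $A_n$ is nonsingular it acts on $\mathbb{R}^{d}/\mathbb{Z}^{d}$ as an $\abs{\det A_n}$-to-one covering, so $G_n^{\bbet}$ also has Lebesgue volume $\prod_{i=1}^{d} 2r_i(n)$.

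For the divergence case I would use the hypothesis $(\mu\otimes\nu)(W)=1$ directly. Fubini's theorem, slicing in the $\balp$ variable, gives $\int_{[0,1)^{d}} \nu\big(\mathcal{T}_{\balp}(\mathcal{A},\br)\big)\,\d\mu(\balp) = (\mu\otimes\nu)(W) = 1$. Since $\nu$ is a probability measure, the integrand never exceeds $1$, so it must equal $1$ for $\mu$-almost every $\balp$; this is precisely the divergence conclusion.

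For the convergence case one argues directly, without invoking the full-measure hypothesis (indeed in this regime that hypothesis cannot hold). Slicing $E_n$ in the $\bbet$ variable — or, in the case $\nu = \mathcal{L}^{d}$, in the $\balp$ variable — shows that $(\mu\otimes\nu)(E_n)$ is controlled by $\prod_{i=1}^{d} 2r_i(n)$, whence $\sum_n (\mu\otimes\nu)(E_n) < \infty$. The first Borel--Cantelli lemma then gives $(\mu\otimes\nu)(W)=0$, and slicing in $\balp$ once more yields $\int \nu\big(\mathcal{T}_{\balp}(\mathcal{A},\br)\big)\,\d\mu(\balp)=0$, so $\nu\big(\mathcal{T}_{\balp}(\mathcal{A},\br)\big)=0$ for $\mu$-almost every $\balp$.

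There is no serious obstacle here: the substance is entirely Fubini's theorem. The two points that warrant a moment's care are the measurability of $W$ (routine, since each $E_n$ is a finite union of boxes) and, in the convergence case, the bound $\sum_n (\mu\otimes\nu)(E_n)<\infty$. The latter is immediate whenever one of $\mu,\nu$ equals $\mathcal{L}^{d}$, because then the corresponding sections of $E_n$ are boxes of Lebesgue volume $\prod_{i=1}^{d} 2r_i(n)$ independently of the other variable — and this is exactly the setting in which Proposition~\ref{p0} specialises to the Lebesgue dichotomy of Theorem~\ref{t1}.
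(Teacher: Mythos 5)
Your treatment of the divergence half coincides with the paper's: the paper's entire proof of Proposition \ref{p0} is the remark that it is ``an application of Fubini's theorem'', and your slicing of $W=\limsup_n E_n$ in the $\balp$ variable, plus the observation that the integrand $\nu\big(\mathcal{T}_{\balp}(\mathcal{A},\br)\big)\le 1$ forces it to equal $1$ $\mu$-a.e., is exactly that application. The measurability point is fine in substance, though $E_n$ is a finite union of slabs cut out by the conditions $|(A_n\balp)_i-\beta_i-m_i|\le r_i(n)$ rather than a finite union of boxes; it is Borel in any case, being the preimage of $\mathcal{R}_n(\br)$ under a piecewise affine map.

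The convergence half is where your write-up has a genuine gap. Your parenthetical claim that the full-measure hypothesis ``cannot hold'' when $\sum_n\prod_i r_i(n)<\infty$ is false for general measures: take $\mu=\nu=\delta_{\bzero}$; then $(\bzero,\bzero)\in E_n$ for every $n$, so $W$ has full $\mu\otimes\nu$-measure no matter how fast $\br$ decays, yet $\nu\big(\mathcal{T}_{\bzero}(\mathcal{A},\br)\big)=1$. Likewise, your assertion that slicing $E_n$ in the $\bbet$ variable bounds $(\mu\otimes\nu)(E_n)$ by $\prod_i 2r_i(n)$ silently uses $\mu=\mathcal{L}^d$ (and nonsingular $A_n$), and the alternative slicing uses $\nu=\mathcal{L}^d$; for arbitrary probability measures the section masses $\mu(G_n^{\bbet})$, resp. $\nu(F_n^{\balp})$, are not controlled by the volume of the box, as you concede in your final paragraph. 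So your argument proves the convergence clause only when one of the two measures is Lebesgue. That is indeed the only setting in which the proposition is invoked --- but note it feeds Corollary \ref{t0} (via Theorem \ref{TZ}), not Theorem \ref{t1}, and there, with $\nu=\mathcal{L}^d$, the convergence clause follows for \emph{every} $\balp$ directly from Lemma \ref{BC lemma} applied to the boxes $F_n^{\balp}$, while the full-measure hypothesis is only ever verified in the divergence regime. In fairness, the paper's one-line justification also only delivers the divergence clause, and the $\delta_{\bzero}$ example shows the convergence clause of the statement itself needs $\nu$ to assign summable measure to the shrinking boxes; but as a proof of the proposition as stated, your convergence half does not close, and the specific claim you lean on to dismiss the general case is incorrect.
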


The result below follows immediately from Theorem \ref{TZ} and Proposition $\ref{p0}$.

\begin{cor}
\label{t0}
Let ${\bf r}\colon \N\to (0,\frac{1}{2})^d$, and let $\mu$ be a probability measure on $[0,1)^{d}$.
Let $(A_n)_{n\in\mathbb{N}}$ be a sequence of integral $d \times d$ matrices, and let $A_0$ be the identity matrix in $d$ dimensions. 
Assume that, for some $K > 1$, we have
\begin{equation}\label{sing0}
\sigma(A_{n+1}A_{n}^{-1}) \ge K \qquad (n \ge 0).
\end{equation}
Assume also that there exists $s>d+1$ such that
$$\widehat{\mu}({\bf {t}})=O\Big((\log |{\bf t}|_{\infty})^{-s}\Big) ~~\text{as}~  |{\bf t}|_{\infty} \to \infty.$$
Then, for $\mu$-almost all ${\bm \alpha}$,
\begin{equation*}
\mathcal{L}^d\big(\mathcal{T}_{{\bm \alpha}}(\mathcal{A}, {\bf r})\big)=\begin{cases}
   0,   & \text{if  $\sum_{n=1}^{\infty}\prod_{i=1}^{d}r_i(n)<\infty$}, \\
    1,  & \text{if  $\sum_{n=1}^{\infty}\prod_{i=1}^{d}r_i(n)=\infty$}.
\end{cases}
\end{equation*}
\end{cor}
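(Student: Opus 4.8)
The plan is to derive Corollary~\ref{t0} by combining Theorem~\ref{TZ} with Proposition~\ref{p0}, so the only work is to check that the hypotheses of these two results are met. First I would fix $\nu = \mathcal{L}^d$ restricted to $[0,1)^d$ and consider the product measure $\mu \otimes \mathcal{L}^d$ on $[0,1)^d \times [0,1)^d$. The key observation is that the two-variable set
\[
\mathcal{W}(\mathcal{A},{\bf r}) := \left\{ ({\bm \alpha},{\bm \beta}) \in [0,1)^d \times [0,1)^d \colon A_n{\bm \alpha} \in {\bm \beta} + \mathcal{R}_n({\bf r}) \!\!\!\!\!\pmod 1 \text{ i.m. } n \right\}
\]
has, for each fixed ${\bm \alpha}$, ${\bm \beta}$-section equal to $\mathcal{T}_{{\bm \alpha}}(\mathcal{A},{\bf r})$; dually, for each fixed ${\bm \beta}$, its ${\bm \alpha}$-section is precisely $\mathcal{W}_{{\bm \beta}}(\mathcal{A},{\bf r})$ from Theorem~\ref{TZ}. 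Thus, to invoke Proposition~\ref{p0} I need $\mathcal{W}(\mathcal{A},{\bf r})$ to have full $\mu \otimes \mathcal{L}^d$-measure.

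The second step handles the divergence case. Assuming $\sum_n \prod_i r_i(n) = \infty$, I apply Theorem~\ref{TZ} with the measure $\mu$ and the fixed vector ${\bm \beta}$: the hypotheses on $\mathcal{A}$ (namely $\sigma(A_{n+1}A_n^{-1}) \ge K > 1$, which is exactly \eqref{sing0}) and on the Fourier decay of $\mu$ (the $(\log|{\bf t}|_\infty)^{-s}$ bound with $s > d+1$) match verbatim, so Theorem~\ref{TZ} gives $\mu(\mathcal{W}_{{\bm \beta}}(\mathcal{A},{\bf r})) = 1$ for every ${\bm \beta} \in [0,1)^d$. By Fubini–Tonelli applied to the indicator of $\mathcal{W}(\mathcal{A},{\bf r})$ (which is measurable as a countable intersection of countable unions of boxes in the product space), integrating the full-measure ${\bm \alpha}$-sections over ${\bm \beta}$ yields $(\mu \otimes \mathcal{L}^d)(\mathcal{W}(\mathcal{A},{\bf r})) = 1$. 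Proposition~\ref{p0} then applies and gives, for $\mu$-almost all ${\bm \alpha}$, that $\mathcal{L}^d(\mathcal{T}_{{\bm \alpha}}(\mathcal{A},{\bf r})) = 1$ when the sum diverges and $= 0$ when it converges, which is exactly the claimed dichotomy.

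It remains to note that the convergence case of the dichotomy does not even need Proposition~\ref{p0}: if $\sum_n \prod_i r_i(n) < \infty$, then for \emph{every} ${\bm \alpha}$, by the first Borel–Cantelli lemma applied to the boxes ${\bm \beta} \mapsto \mathbbm{1}[A_n{\bm \alpha} \in {\bm \beta} + \mathcal{R}_n({\bf r})]$ — each of which has $\mathcal{L}^d$-measure $\prod_i 2r_i(n)$ on ${\bm \beta}$-space, since $A_n{\bm \alpha} \in {\bm \beta} + \mathcal{R}_n({\bf r})$ iff ${\bm \beta} \in A_n{\bm \alpha} - \mathcal{R}_n({\bf r}) \!\!\pmod 1$ — one gets $\mathcal{L}^d(\mathcal{T}_{{\bm \alpha}}(\mathcal{A},{\bf r})) = 0$ directly. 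So strictly speaking only the divergence half requires the machinery above.

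I do not expect a genuine obstacle here: the statement is asserted in the excerpt to ``follow immediately'' from Theorem~\ref{TZ} and Proposition~\ref{p0}, and the work is bookkeeping — verifying hypothesis-matching and the measurability of $\mathcal{W}(\mathcal{A},{\bf r})$ needed for Fubini. The one point deserving a line of care is that Proposition~\ref{p0} is stated for a product measure $\mu \otimes \nu$, so one must explicitly take $\nu = \mathcal{L}^d$ and confirm that the full-measure hypothesis of Proposition~\ref{p0} is supplied by the divergence conclusion of Theorem~\ref{TZ} via Fubini, rather than being assumed; this is the only place where the argument is not a literal quotation of an earlier result.
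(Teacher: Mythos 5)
Your proposal is correct and follows essentially the same route as the paper, which indeed derives Corollary \ref{t0} "immediately" from Theorem \ref{TZ} (applied fibrewise in ${\bm \beta}$, then Fubini to get full $\mu\otimes\mathcal{L}^d$-measure of the two-variable set) together with Proposition \ref{p0}, with the convergence half handled directly by the first Borel--Cantelli lemma for every ${\bm \alpha}$. Your added care about measurability and about supplying the full-measure hypothesis of Proposition \ref{p0} via Theorem \ref{TZ} rather than assuming it is exactly the bookkeeping the paper leaves implicit.
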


In Theorem \ref{t1} and Corollary \ref{t0}, if $\mu$ is Lebesgue measure, then the condition (\ref{sing0}) can be relaxed. Moreover, in this case we do not require $\br$ to be doubling.

\begin{thm}\label{t2}
Let ${\bf r}\colon \N\to (0,\frac{1}{2})^d$. Let $(A_n)_{n\in\mathbb{N}}$ be a sequence of integral $d \times d$ matrices, and let $A_0$ be the identity matrix in $d$ dimensions. Assume that $\det(A_m - A_n) \ne 0$ whenever $m \ne n$.
Then, for $\mathcal{L}^{d}$-almost all ${\bm \alpha}\in [0,1)^{d}$,  
\begin{equation*}
\mathcal{L}^d\big(\mathcal{T}_{{\bm \alpha}}(\mathcal{A}, {\bf r})\big)=\begin{cases}
   0,   & \text{if  $\sum_{n=1}^{\infty}\prod_{i=1}^{d}r_i(n)<\infty$}, \\
    1,  & \text{if  $\sum_{n=1}^{\infty}\prod_{i=1}^{d}r_i(n)=\infty$}.
\end{cases}
\end{equation*}
\end{thm}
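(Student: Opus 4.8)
The plan is to lift the problem to the $2d$-torus $[0,1)^{2d}\cong(\mathbb{R}/\mathbb{Z})^{2d}$ and to observe that there the relevant $\limsup$ set is built from a sequence of \emph{exactly pairwise independent} events, so that no zero--one law is needed. Throughout I identify $[0,1)^{d}$ with $\mathbb{R}^{d}/\mathbb{Z}^{d}$, and, since $r_i(n)<\tfrac12$, I regard $\mathcal{R}_n({\bf r})$ as the box $\prod_{i=1}^{d}[-r_i(n),r_i(n)]$, so that $\mathcal{L}^{d}(\mathcal{R}_n({\bf r}))=2^{d}\prod_{i}r_i(n)$ and, using symmetry of $\mathcal R_n$, $\mathcal{T}_{{\bm\alpha}}(\mathcal{A},{\bf r})=\limsup_{n}\big(A_n{\bm\alpha}+\mathcal{R}_n({\bf r})\big)\pmod{1}$.

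The convergence half is immediate: if $\sum_{n}\prod_{i}r_i(n)<\infty$, then for \emph{every} ${\bm\alpha}$ the measures $\mathcal{L}^{d}(A_n{\bm\alpha}+\mathcal{R}_n({\bf r}))=2^{d}\prod_i r_i(n)$ are summable, so the convergence Borel--Cantelli lemma gives $\mathcal{L}^{d}(\mathcal{T}_{{\bm\alpha}}(\mathcal{A},{\bf r}))=0$. For the divergence half I would assume $\sum_{n}\prod_{i}r_i(n)=\infty$ and set
\[
E_n:=\big\{({\bm\alpha},{\bm\beta})\in[0,1)^{2d}:\ A_n{\bm\alpha}-{\bm\beta}\in\mathcal{R}_n({\bf r})\pmod{1}\big\},\qquad W:=\limsup_{n}E_n .
\]
Every ${\bm\alpha}$-section of $W$ equals $\mathcal{T}_{{\bm\alpha}}(\mathcal{A},{\bf r})$, so by Proposition~\ref{p0} applied to $\mu\otimes\nu=\mathcal{L}^{2d}$ (equivalently, directly by Fubini) it suffices to prove that $\mathcal{L}^{2d}(W)=1$.

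The crux is the identity $\mathcal{L}^{2d}(E_m\cap E_n)=\mathcal{L}^{2d}(E_m)\,\mathcal{L}^{2d}(E_n)$ for $m\ne n$. Integrating out ${\bm\beta}$ first gives $\mathcal{L}^{2d}(E_n)=\mathcal{L}^{d}(\mathcal{R}_n({\bf r}))=2^{d}\prod_i r_i(n)$, whence $\sum_n\mathcal{L}^{2d}(E_n)=\infty$. For $m\ne n$, using $-\mathcal{R}_n({\bf r})=\mathcal{R}_n({\bf r})$ and substituting $\mathbf{u}={\bm\beta}-A_m{\bm\alpha}$ in the ${\bm\beta}$-integral, one is led (after Tonelli) to
\[
\mathcal{L}^{2d}(E_m\cap E_n)=\int_{\mathcal{R}_m({\bf r})}\left(\int_{[0,1)^{d}}\mathbf{1}_{\mathcal{R}_n({\bf r})}\big(\mathbf{u}+(A_m-A_n){\bm\alpha}\big)\,\mathrm{d}{\bm\alpha}\right)\mathrm{d}\mathbf{u}.
\]
Here the hypothesis $\det(A_m-A_n)\ne0$ is used decisively: the map ${\bm\alpha}\mapsto(A_m-A_n){\bm\alpha}$ is a surjective endomorphism of $\mathbb{R}^{d}/\mathbb{Z}^{d}$, hence preserves $\mathcal{L}^{d}$, so the inner integral equals $\mathcal{L}^{d}(\mathcal{R}_n({\bf r}))$ for \emph{every} $\mathbf{u}$; the claimed product formula follows.

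To finish I would invoke the quantitative (Kochen--Stone) form of the second Borel--Cantelli lemma, valid for pairwise independent events: writing $S_N:=\sum_{n\le N}\mathcal{L}^{2d}(E_n)\to\infty$ and using $\mathcal{L}^{2d}(E_n)\le1$,
\begin{align*}
\mathcal{L}^{2d}(W)\ \ge\ \limsup_{N\to\infty}\frac{S_N^{2}}{\sum_{m,n\le N}\mathcal{L}^{2d}(E_m\cap E_n)}
&=\limsup_{N\to\infty}\frac{S_N^{2}}{S_N^{2}-\sum_{n\le N}\mathcal{L}^{2d}(E_n)^{2}+S_N}\\
&\ge\ \limsup_{N\to\infty}\frac{S_N}{S_N+1}=1 ,
\end{align*}
so $\mathcal{L}^{2d}(W)=1$, as required. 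I do not expect a genuine obstacle here: the only thing one might fear is obtaining merely $\mathcal{L}^{2d}(W)>0$ and then needing a zero--one law to upgrade to full measure, but the pair-correlation relation above is an \emph{equality} rather than a bound up to a multiplicative constant, so Kochen--Stone yields full measure directly. Note also that the argument uses neither monotonicity or the doubling property of ${\bf r}$ nor any expansion hypothesis on $\mathcal{A}$, in accordance with the statement of Theorem~\ref{t2}.
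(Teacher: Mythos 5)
Your proposal is correct and takes essentially the same route as the paper: lift to the product space $[0,1)^{2d}$, establish exact pairwise independence of the events $E_n$ using $\det(A_m-A_n)\neq 0$, conclude that $\limsup E_n$ has full $\mathcal{L}^{2d}$-measure by a divergence Borel--Cantelli argument, and finish with Fubini. The only cosmetic differences are that the paper verifies the independence by Fourier expansion and orthogonality (only $\mathbf{k}=\mathbf{0}$ survives) rather than your change-of-variables/Haar-invariance argument for the toral endomorphism $\bm{\alpha}\mapsto(A_m-A_n)\bm{\alpha}$, and it invokes the pairwise-independent divergence Borel--Cantelli lemma directly instead of running the Chung--Erd\H{o}s/Kochen--Stone computation explicitly.
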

\begin{rem}
The determinant condition is weaker than either of the conditions \eqref{sing}, \eqref{sing0}. Indeed, \eqref{sing} implies the determinant condition trivially, since it gives
$\sig(A_n - A_m)>0$ whenever $m \ne n$. Moreover, if we instead assume \eqref{sing0}, then for $m < n$ we have
$$\sigma(A_n-A_m)\ge \sigma(A_m)\sigma(A_nA_m^{-1}-I)\ge \sigma(A_m)(K^{n-m}-1)>0,$$
where $I$ denotes the identity matrix.

\end{rem}

Let ${\bm \omega}=({\bm \omega_{n}})_{n\ge 1}$ be sequence of points in $\mathbb{R}^{d},$ wherein we abbreviate ${\bm \omega_{n}}=(\omega_{n,1},\ldots,\omega_{n,d})\in \mathbb{R}^{d}.$ Define the twisted lim sup set 
$$\mathcal{T}({\bm \omega}, {\bf r}):=\left\{ {\bm \beta} \in [0,1)^{d} \colon {\bm \omega_{n}} \in {\bm {\beta}}+\mathcal{R}_n({\bf r}) \!\!\!\!\!\pmod{1} \text{ i.m. } n \in \N \right\}.$$
Gonz\'{a}lez Robert \emph{et al.} \cite{RHSW25} proposed the following conjecture.

\begin{conj}[{\cite[Conjecture 1.10]{RHSW25}}]\label{conj}
For almost every sequence ${\bm \omega}$ in $\mathbb{R}^{d},$ for every $d$-tuple of non-increasing functions ${\bf r}=(r_1,\ldots,r_d),$ we have
\begin{equation*}
\mathcal{L}^d\big(\mathcal{T}({\bm \omega}, {\bf r})\big)=\begin{cases}
   0,   & \text{if  $\sum_{n=1}^{\infty}\prod_{i=1}^{d}r_i(n)<\infty$}, \\
    1,  & \text{if  $\sum_{n=1}^{\infty}\prod_{i=1}^{d}r_i(n)=\infty$}.
\end{cases}
\end{equation*}
\end{conj}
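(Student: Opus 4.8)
The convergence half is soft and holds for \emph{every} sequence ${\bm \omega}$: the set $\mathcal{T}({\bm \omega}, {\bf r})$ is the $\limsup$ of the boxes ${\bm \omega}_n+\mathcal{R}_n({\bf r})\pmod 1$, whose volumes are $\prod_{i=1}^{d}2r_i(n)$, so $\sum_n\prod_{i=1}^{d}r_i(n)<\infty$ forces $\mathcal{L}^d(\mathcal{T}({\bm \omega},{\bf r}))=0$ by the first Borel--Cantelli lemma. Taking ``almost every sequence'' to mean that the ${\bm \omega}_n$ are independent and uniform on $[0,1)^{d}$ (the natural reading), the divergence half is immediate \emph{for each fixed} ${\bf r}$: for fixed ${\bm \beta}$ the events $\{{\bm \omega}_n\in{\bm \beta}+\mathcal{R}_n({\bf r})\pmod1\}$ are independent of probability $\prod_{i=1}^{d}2r_i(n)$, so the second Borel--Cantelli lemma puts ${\bm \beta}$ in $\mathcal{T}({\bm \omega},{\bf r})$ almost surely, and since $\mathcal{L}^d\le 1$, Fubini's theorem turns this into $\mathcal{L}^d(\mathcal{T}({\bm \omega},{\bf r}))=1$ almost surely.

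The entire difficulty is the order of quantifiers: one needs a \emph{single} conull set of sequences serving \emph{all} non-increasing ${\bf r}$ with divergent sum simultaneously, and monotonicity alone does not obviously reduce this to a countable sub-family --- although $\mathcal{T}({\bm \omega},{\bf r})\subseteq\mathcal{T}({\bm \omega},{\bf r}')$ when ${\bf r}\le{\bf r}'$, so it would suffice to control a family cofinal from below among the divergent non-increasing tuples, no countable such family exists (there is no slowest-diverging monotone series). The proposal is to extract, once and for all, a strong almost-sure equidistribution property of the random sequence and then argue deterministically. Concretely, I would show that almost surely, for every time window $[M,M']$, every $j\in\N$, and every dyadic cube $Q$ of sidelength $2^{-j}$ with dyadic-rational corners such that $(M'-M)\,\mathcal{L}^d(Q)\ge(\log M')^2$,
\[
\#\{\,n\in[M,M']\colon {\bm \omega}_n\in Q\,\}\ \ge\ \tfrac12(M'-M)\,\mathcal{L}^d(Q).
\]
For each admissible triple the left side is a sum of independent Bernoulli variables, so Bernstein's inequality bounds the failure probability by $\exp(-c(M'-M)\mathcal{L}^d(Q))\le\exp(-c(\log M')^2)$; as there are only polynomially in $2^{\ell}$ many admissible triples with $M'\le 2^{\ell}$, a union bound and the first Borel--Cantelli lemma along $\ell\to\infty$ (with monotonicity in $M'$ filling the gaps) give the property.

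Granting this, fix a non-increasing ${\bf r}$ with $\sum_n\prod_{i=1}^{d}r_i(n)=\infty$ and partition $\N$ into consecutive blocks $[M_k,M_{k+1})$ along which $\sum_{M_k\le n<M_{k+1}}\prod_{i=1}^{d}2r_i(n)$ stays in a bounded range around $1$, so each block contributes target boxes of total volume $\asymp 1$. Using the equidistribution property one would show that $\bigcup_{M_k\le n<M_{k+1}}({\bm \omega}_n+\mathcal{R}_n({\bf r})\pmod1)$ meets every dyadic cube in at least a fixed proportion of its measure, uniformly in $k$; a Lebesgue density argument (an uncovered set of positive measure would have a density point contradicting the uniform-proportion bound at fine scales), or a zero--one argument on the near-independent block events, then yields $\mathcal{L}^d(\mathcal{T}({\bm \omega},{\bf r}))=1$. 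I expect the crux to be the block step when ${\bf r}$ is not doubling: within one equal-mass block the box shapes can span many orders of magnitude, so equidistribution of the centres does not by itself control how much of a cube a union of wildly anisotropic boxes covers, and one seems forced to split each block into scale-homogeneous sub-blocks and argue that infinitely often a sub-block carries a definite share of the mass in boxes of comparable shape. A secondary nuisance is blocks on which some $r_i$ stays bounded away from $0$: these contain too few centres for any equidistribution statement to bite and must be disposed of by a separate elementary argument (such ${\bf r}$ force large boxes and one reduces, with care, to the genuinely shrinking regime $\prod_{i=1}^{d}r_i(n)\to 0$).
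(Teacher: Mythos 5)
The first thing to note is that the paper does not prove this statement at all: it is quoted verbatim as an open conjecture of Gonz\'alez Robert, Hussain, Shulga and Ward, and the paper only establishes a variant, Theorem \ref{t2}, in which the order of quantifiers is weaker --- for each fixed ${\bf r}$ one obtains a full-measure set of ${\bm \alpha}$ (allowed to depend on ${\bf r}$) for which the dichotomy holds for the sequence ${\bm \omega}=(A_n{\bm \alpha})_{n\ge1}$. Your convergence half and your fixed-${\bf r}$ divergence argument are correct and are essentially the mechanism of that variant: the paper proves pairwise independence of the events $F_n$ on the product space $[0,1)^{d}\times[0,1)^{d}$ and applies the divergence Borel--Cantelli lemma together with Fubini, which is the same device as your ``independent uniform centres, second Borel--Cantelli for each fixed ${\bm \beta}$, then Fubini'' step. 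So what your argument genuinely delivers is (a version of) Theorem \ref{t2}, not Conjecture \ref{conj}.

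For the conjecture itself your sketch has a real gap, and you identify it yourself. The quantitative equidistribution property you extract (Bernstein plus a union bound over dyadic cubes and windows with $(M'-M)\,\mathcal{L}^d(Q)\ge(\log M')^{2}$) only counts centres in cubes coarse enough to contain about $(\log M')^{2}$ expected points; it says nothing about how the boxes ${\bm \omega}_n+\mathcal{R}_n({\bf r})$ overlap inside such a cube, and when the boxes have volume $v$ the natural scale for the covering step is a cube of volume comparable to $v$, where your property is vacuous. Overlap control is precisely what pairwise independence or a second-moment estimate would supply, but those are statements about the ${\bf r}$-dependent events themselves and cannot be pre-extracted by a union bound over the uncountable family of non-increasing ${\bf r}$; nor, as you observe, does monotonicity reduce matters to a countable cofinal subfamily. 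The anisotropic non-doubling blocks and the blocks on which some $r_i$ stays bounded away from $0$ are likewise left as admitted open steps. Hence the divergence half, uniformly in ${\bf r}$, is not established by your proposal --- which is consistent with the statement's status in the paper as an open conjecture rather than a theorem.
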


Let $(A_n)_{n\ge 1}$ be a sequence of integral matrices satisfying the assumptions of Theorem \ref{t2}, and let ${\bm \omega}=(A_n{\bm \alpha})_{n\ge1}$. We see that
Theorem \ref{t2} solves, affirmatively, a variant of Conjecture \ref{conj}. Moreover, the result remains valid for general ${\bf r}$, relaxing monotonicity.

Returning to the convergence case of Theorem \ref{t2}, the Lebesgue measure of $\mathcal{T}_{{\bm \alpha}}(\mathcal{A}, {\bf r})$ is zero, offering no further insight into its size. Intuitively, the size of $\mathcal{T}_{{\bm \alpha}}(\mathcal{A}, {\bf r})$ should decrease as the rate of approximation governed by the function ${\bf r}$ increases.
In short, we require a more delicate notion of size than simply Lebesgue measure. Generalised Hausdorff measures serve as suitable framework for capturing the fine metric structure of  $\mathcal{T}_{{\bm \alpha}}(\mathcal{A}, {\bf r})$.
The Hausdorff $f$-measure $\mathcal{H}^{f},$ defined via a dimension function $f,$ naturally extends the Lebesgue measure. We refer the reader to \S \ref{Hausdorff} for standard definitions of $\mathcal{H}^{f}$ and Hausdorff dimension.

In view of the development of the Lebesgue theory, it is natural to ask for conditions under which $\mathcal{H}^{f}\big(\mathcal{T}_{{\bm \alpha}}(\mathcal{A}, {\bf r})\big)$ can be determined. The following result provides a simple criterion.

\begin{thm}\label{t3}
Let ${\bf r}\colon \N \to (0,\frac{1}{2})^d$ with $|\br(n)|_\infty \to 0$ as $n \to \infty$. Let $(A_n)_{n\in\mathbb{N}}$ be a sequence of integral $d \times d$ matrices, and let $A_0$ be the identity matrix in $d$ dimensions. Assume that $\det(A_m - A_n) \ne 0$ whenever $m \ne n$. Let $f$ be a dimension function such that $s \preceq f \preceq s+1$ for some integer $s \in [0,d-1]$.
Then, for $\mathcal{L}^{d}$-almost all ${\bm \alpha}\in [0,1)^{d}$, we have
\begin{equation*}
\mathcal{H}^{f}\big(\mathcal{T}_{{\bm \alpha}}(\mathcal{A}, {\bf r})\big)=\begin{cases}
   0,   & \text{ if  } \sum_{n=1}^{\infty}s_{n}({\bf r},f)<\infty, \\
    \mathcal{H}^{f}([0,1)^{d}),  & \text{ if  }\sum_{n=1}^{\infty}s_{n}({\bf r},f)=\infty,
\end{cases}
\end{equation*}
where 
$$s_{n}({\bf r},f) = \min_{1\le i\le d}\left\{f(r_i(n))\prod_{j\colon r_{j}(n)
 \ge r_{i}(n)}\frac{r_j(n)}{r_i(n)}\right\}.$$
\end{thm}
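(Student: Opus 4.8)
The plan is to prove Theorem~\ref{t3} by combining a convergence (Borel--Cantelli) argument with a divergence argument driven by a mass transference principle. Throughout, fix $\bm\alpha$ from the full-measure set produced by Theorem~\ref{t2}, so that the Lebesgue dichotomy for $\mathcal{T}_{\bm\alpha}(\mathcal{A},\br)$ holds; we will need only that and the structure of the target boxes.

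For the convergence case, suppose $\sum_n s_n(\br,f)<\infty$. The set $\mathcal{T}_{\bm\alpha}(\mathcal{A},\br)$ is the $\limsup$ of the boxes $B_n:=A_n\bm\alpha+\mathcal{R}_n(\br)\pmod 1$, which have side lengths $2r_1(n),\dots,2r_d(n)$. The key geometric input is the standard estimate for the Hausdorff $f$-content of a rectangle: a box with side lengths $\ell_1\ge\ell_2\ge\cdots\ge\ell_d$ can be efficiently covered by $\prod_{j<i}(\ell_j/\ell_i)$ cubes of side $\ell_i$ for each $i$, so its $\mathcal{H}^f$-content is $\ll \min_{1\le i\le d} f(\ell_i)\prod_{j: \ell_j\ge \ell_i}(\ell_j/\ell_i)$, which is exactly $\ll s_n(\br,f)$. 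Summing over $n$ and using the Hausdorff--Cantelli lemma (the $\mathcal{H}^f$ analogue of Borel--Cantelli) gives $\mathcal{H}^f(\mathcal{T}_{\bm\alpha}(\mathcal{A},\br))=0$. The condition $|\br(n)|_\infty\to 0$ ensures the diameters shrink so that $\mathcal{H}^f$-content and $\mathcal{H}^f$-measure agree in the limit.

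For the divergence case, suppose $\sum_n s_n(\br,f)=\infty$. The natural tool is the mass transference principle from balls to rectangles (Wang--Wu--Xu, or the rectangle-to-rectangle version of Beresnevich--Velani): to upgrade full Lebesgue measure of a $\limsup$ of rectangles to full $\mathcal{H}^f$-measure, one shrinks each rectangle $B_n$ anisotropically according to the dimension function and checks that the shrunk $\limsup$ still has full Lebesgue measure, which transfers back to $\mathcal{H}^f$-fullness of the original. Concretely, given $f$ with $s\preceq f\preceq s+1$, one replaces each side length $r_i(n)$ by a suitable power; the hypothesis $\sum_n s_n(\br,f)=\infty$ is precisely the condition guaranteeing that the shrunk rectangles still have divergent measure sum, after which Theorem~\ref{t2} (applied with the modified tuple $\br$, noting we do \emph{not} need monotonicity there) gives full Lebesgue measure of the shrunk $\limsup$. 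Invoking the mass transference principle then yields $\mathcal{H}^f(\mathcal{T}_{\bm\alpha}(\mathcal{A},\br))=\mathcal{H}^f([0,1)^d)$.

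I expect the main obstacle to be the bookkeeping in the divergence case: one must translate the abstract hypotheses of the rectangular mass transference principle (it is stated for $\limsup$ sets of rectangles $\prod_i B(x_{n,i}, \rho_i(n))$ with the scaling exponents controlled by $f$ via the exponents $s$ and $s+1$) into the explicit combinatorial quantity $s_n(\br,f)$, handling the $\min$ over coordinates and the case distinction of which sides are largest, and verifying that the centres $A_n\bm\alpha\pmod 1$ inherit enough regularity (local ubiquity) from Theorem~\ref{t2}. A secondary subtlety is the boundary behaviour when $s=0$ or when $f\preceq s+1$ degenerates to $f\sim \rho^{s+1}$; these edge cases should be dispatched by the defining monotonicity properties of $f\preceq t$ recorded in the excerpt. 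The convergence direction, by contrast, is routine once the rectangle covering lemma is in hand.
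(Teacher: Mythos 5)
Your convergence argument is essentially the paper's (cover each box $A_n{\bm \alpha}+\mathcal{R}_n({\bf r})$ by cubes of the optimal side $r_i(n)$, sum, and apply the Hausdorff--Cantelli argument), and that half is fine. The divergence half, however, has a genuine gap. First, the direction of your transference is backwards: a mass transference principle passes from full \emph{Lebesgue} measure of an \emph{enlarged} system down to full $\mathcal{H}^f$-measure of the original, smaller boxes. You propose to \emph{shrink} the boxes and check that the shrunk limsup has full Lebesgue measure; but in the regime of interest one typically has $\sum_n\prod_i r_i(n)<\infty$ while $\sum_n s_n({\bf r},f)=\infty$ (note $s_n({\bf r},f)\gg\prod_i r_i(n)$ since $f\preceq d$), so any shrinking makes the volume sum converge and Theorem \ref{t2} then gives measure zero, not one. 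Second, the off-the-shelf results you invoke do not apply at this level of generality: the rectangles-to-rectangles principles (Wang--Wu and variants) require side lengths of the form $\rho_n^{a_i}$ contracted to $\rho_n^{a_i+t_i}$ with exponents independent of $n$, plus a ubiquity hypothesis for the centres, none of which is available here (the $r_i(n)$ are arbitrary, not even monotonic, and $f$ is a general dimension function pinned only between consecutive integer powers). Your phrase ``replaces each side length by a suitable power'' does not produce a usable auxiliary system, and you explicitly defer the ``bookkeeping'' --- which is precisely where the proof lives.

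What is actually needed (and what the paper does) is the following. For each $n$, sort $r_{i_1}(n)\le\cdots\le r_{i_d}(n)$, locate the critical index $k=k(n)$ and set $b_n=\big(s_n({\bf r},f)/\prod_{j>k}r_{i_j}(n)\big)^{1/k}$, so that \emph{enlarging} the $k$ smallest sides to $b_n$ produces a tuple $\Psi$ with $\prod_j\psi_j(n)=s_n({\bf r},f)$; by divergence, Theorem \ref{t2} applied to $\Psi$ gives, for a.e.\ ${\bm \alpha}$, a full-measure limsup of boxes whose smallest side is comparable to $b_n$, hence a full-measure limsup of balls of radius $\asymp b_n$ centred in these boxes. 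One then applies the balls-to-open-sets mass transference principle (Lemma \ref{t:weaken}), whose hypothesis is the content bound $\mathcal{H}^f_\infty(B\cap E_n)\gg\mathcal{L}^d(B)\asymp b_n^d$ for each covering ball $B$. Verifying this is the heart of the matter: one exhibits inside $B\cap E_n$ a box with sides $r_{i_1}(n),\dots,r_{i_k}(n),b_n,\dots,b_n$ and runs a mass distribution argument (Lemma \ref{p:MDP}) with a case analysis over the scale $\rho$, and it is exactly here that the hypotheses $s\preceq f\preceq s+1$ (which guarantee that for every integer $l$ either $l\preceq f$ or $f\preceq l$) and the definition of $s_n({\bf r},f)$ are used. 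This route also removes the need for any ubiquity property of the centres $A_n{\bm \alpha}$, which you correctly flag as an unresolved difficulty in your approach. Without the construction of $(k(n),b_n,\Psi)$ and the content estimate, the divergence half remains unproved.
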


Applying Theorem \ref{t3}, we immediately obtain the following corollary.

\begin{cor}
\label{c1}
Let ${\bf r}\colon \N\to (0,\frac{1}{2})^d$ with
$$r_{i}(n)=n^{-\tau_i} ~~(1\le i\le d) ~\text{ and } ~\sum_{i=1}^{d}\tau_{i}> 1.$$
Then for Lebesgue almost all ${\bm \alpha}\in [0,1)^{d}$, 
$$\dim_{\rm H}\mathcal{T}_{{\bm \alpha}}(\mathcal{A}, {\bf r})=\min_{1\le i\le d}\left\{\frac{1+\sum_{j\colon \tau_{j}<\tau_i}(\tau_i-\tau_j)}{\tau_i}\right\}.$$
\end{cor}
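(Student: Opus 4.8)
The plan is to deduce Corollary \ref{c1} from Theorem \ref{t3} by choosing an appropriate family of test dimension functions $f(\rho) = \rho^t$ and analysing the convergence of the series $\sum_n s_n(\mathbf{r}, f)$. First I would fix an ordering of the exponents; by relabelling coordinates we may assume $\tau_1 \le \tau_2 \le \cdots \le \tau_d$, so that for $f(\rho)=\rho^t$ and each index $i$ we have
\[
f(r_i(n)) \prod_{j\colon r_j(n) \ge r_i(n)} \frac{r_j(n)}{r_i(n)}
= n^{-t\tau_i} \prod_{j \le i} n^{\tau_i - \tau_j}
= n^{-\bigl(t\tau_i - i\tau_i + \sum_{j\le i}\tau_j\bigr)},
\]
using that $r_j(n) \ge r_i(n)$ is equivalent to $\tau_j \le \tau_i$ (ties contribute a factor $1$ and may be absorbed either way). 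Hence $s_n(\mathbf{r}, f) = n^{-e(t)}$, where
\[
e(t) = \min_{1\le i\le d}\Bigl\{ t\tau_i - \sum_{j\colon \tau_j < \tau_i}(\tau_i - \tau_j)\Bigr\},
\]
and the series $\sum_n s_n(\mathbf r,f)$ converges iff $e(t) > 1$ and diverges iff $e(t) \le 1$.

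Next I would verify the structural hypotheses of Theorem \ref{t3} for these $f$. The map $t \mapsto e(t)$ is a minimum of finitely many strictly increasing affine functions of $t$, hence continuous and strictly increasing, with $e(t)\to 0^+$ appropriately and $e(t)\to\infty$ as $t\to\infty$; so there is a unique critical value $t^\ast$ with $e(t^\ast) = 1$, namely
\[
t^\ast = \min_{1\le i\le d} \frac{1 + \sum_{j\colon \tau_j < \tau_i}(\tau_i - \tau_j)}{\tau_i},
\]
which is exactly the claimed dimension. For $t < t^\ast$ we get $e(t) < 1$, so the series diverges and Theorem \ref{t3} gives $\mathcal{H}^f(\mathcal{T}_{\bm\alpha}(\mathcal{A},\mathbf{r})) = \mathcal{H}^f([0,1)^d) = \infty$ (for $t<d$), whence $\dim_{\mathrm H}\mathcal{T}_{\bm\alpha} \ge t$; for $t > t^\ast$ we get $e(t) > 1$, the series converges, and $\mathcal{H}^f(\mathcal{T}_{\bm\alpha}(\mathcal{A},\mathbf{r})) = 0$, whence $\dim_{\mathrm H}\mathcal{T}_{\bm\alpha} \le t$. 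Letting $t \to t^\ast$ from both sides yields $\dim_{\mathrm H}\mathcal{T}_{\bm\alpha}(\mathcal{A},\mathbf{r}) = t^\ast$ for Lebesgue-almost all $\bm\alpha$. One also checks $t^\ast \in [0,1]$ under the hypothesis $\sum_i \tau_i > 1$ (the $i=d$ term in the minimum is $\le \frac{1 + \sum_{j<d}(\tau_d-\tau_j)}{\tau_d} = \frac{1 - \sum_{j<d}\tau_j + (d-1)\tau_d}{\tau_d}$, and a short computation shows the minimum over $i$ does not exceed $1$), so that the integer $s$ with $s \preceq \rho^t \preceq s+1$, needed to invoke Theorem \ref{t3}, can be taken to be $s = \lfloor t \rfloor \in [0,d-1]$; note $\rho^t$ trivially satisfies $t \preceq \rho^t \preceq t$, so $s \preceq \rho^t \preceq s+1$ holds whenever $s \le t \le s+1$.

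The only genuinely delicate point is the bookkeeping at the critical exponent: Theorem \ref{t3} is stated for dimension functions $f$ with $s \preceq f \preceq s+1$ for an \emph{integer} $s \in [0,d-1]$, so when $t^\ast$ happens to be an integer one must apply the theorem with $f(\rho) = \rho^t$ for $t$ slightly above and below $t^\ast$ (both lying in a common admissible interval $[s, s+1]$), which still pins down the Hausdorff dimension; the pure power functions always satisfy the $\preceq$ constraints, so no extra care with logarithmic corrections is needed. The remaining work — confirming that $s_n(\mathbf r, f)$ has the stated power-law form, that ties among the $\tau_j$ are harmless, and that $e$ is piecewise linear and increasing — is routine. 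I expect the identification of $e(t^\ast)=1$ with the displayed formula, and checking $t^\ast \le 1$, to be the main (though still elementary) obstacle.
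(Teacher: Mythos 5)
Your overall route is the intended one --- the paper deduces the corollary directly from Theorem \ref{t3} by testing against the power functions $f(\rho)=\rho^t$ and locating the critical exponent --- and your final formula is correct, but two of your intermediate claims are wrong as written and need repairing. First, the identification of $s_n(\mathbf{r},\rho^t)$: writing $e_i(t)=t\tau_i-\sum_{j\colon \tau_j<\tau_i}(\tau_i-\tau_j)$, the $i$-th competing term equals $n^{-e_i(t)}$, and since $x\mapsto n^{-x}$ is decreasing, the minimum over $i$ is $n^{-\max_i e_i(t)}$, not $n^{-\min_i e_i(t)}$. Hence $\sum_n s_n(\mathbf{r},\rho^t)$ converges iff $\max_i e_i(t)>1$, i.e.\ iff $t>t_i^\ast$ for some $i$, i.e.\ iff $t>\min_i t_i^\ast$ where $e_i(t_i^\ast)=1$. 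Your two min/max slips happen to cancel: the critical value is indeed $t^\ast=\min_i t_i^\ast$, and your conclusions for $t<t^\ast$ (all $e_i(t)<1$, so divergence) and $t>t^\ast$ (some $e_i(t)>1$, so convergence) survive once restated; but the displayed definition of $e(t)$ as a minimum, and the assertion that $e(t)>1$ whenever $t>t^\ast$, are false as stated (take $d=2$, $\tau_1\neq\tau_2$ and $t$ slightly above $\min_i t_i^\ast$).

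Second, the parenthetical claim $t^\ast\in[0,1]$ is false: for $d=2$ and $\tau_1=\tau_2=\tfrac35$ one has $\sum_i\tau_i>1$ yet $t^\ast=\tfrac53$; indeed the paper's remark after the corollary emphasises that the dimension $1/\tau$ may exceed $1$. What your argument actually requires is only $t^\ast<d$, so that for test exponents $t$ near $t^\ast$ there is an integer $s\in\{0,1,\ldots,d-1\}$ with $s\le t\le s+1$ (making $s\preceq\rho^t\preceq s+1$ legitimate in Theorem \ref{t3}). This is precisely what the hypothesis $\sum_i\tau_i>1$ delivers: choosing $i$ with $\tau_i$ maximal, ties contribute nothing and $t_i^\ast=\frac{1+\sum_{j}(\tau_i-\tau_j)}{\tau_i}=d+\frac{1-\sum_j\tau_j}{\tau_i}<d$, so $t^\ast<d$. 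With these two corrections, plus the standard remark that one intersects the full-measure sets of ${\bm\alpha}$ over a countable family of exponents $t$ approaching $t^\ast$ from either side, your deduction is complete and coincides with the paper's (unwritten, one-line) proof.
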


\begin{rem}
Let us compare this to Theorem \ref{B-th}. Taking 
\[
\tau_1=\cdots=\tau_{d}=\tau>\frac{1}{d}
\]
in Corollary \ref{c1} shows that the Hausdorff dimension of  $\mathcal{T}_{{\bm \alpha}}(\mathcal{A}, {\bf r})$ is also $\frac{1}{\tau}$, irrespective of the ambient dimension. Some may find this outcome to be counter-intuitive.
\end{rem}

\subsection{Open problems}

We expect many of our results, such as Theorem \ref{t1}, to hold subject to a Diophantine condition on $\balp$. Such results might refine the ones that we have presented herein.

\subsection*{Notation}
Throughout, we use the standard Bachmann--Landau notations. For functions $f, g\colon X\to \mathbb{R}$, we write $f\ll g$ to if there exists a constant $C > 0$ such that $|f(x)|\le C |g(x)|$ for all $x\in X$. We write $f\asymp g$ if $f\ll g \ll f$. 
We also use the abbreviations $e(x)=\text{exp}{(2\pi i x)}$ and $\log_+ m = \max \{ 1, \log m \}$.

\section{Preliminaries}
\subsection{Hausdorff measure and content}
\label{Hausdorff}
Let $f$ be a dimension function. For $E\subset \R^d$ and $\eta>0$, define
\[\mathcal H_\eta^f(E)=\inf\bigg\{\sum_{i}f(|B_i|):E\subset \bigcup_{i\ge 1}B_i, \quad |B_i|\le \eta \bigg\},\]
where $|B|$ denotes the diameter of a ball $B$. The definition also applies when $\eta=\infty$, and the quantity $\mathcal{H}_\infty^f(E)$ is called the {\em Hausdorff $f$-content} of $E$. 
The {\em Hausdorff $f$-measure} of $E$ is 
\[
\mathcal{H}^f(E):=\lim_{\eta\to 0^+}\mathcal H_\eta^f(E).\]

\begin{lemma}[{\cite[Lemma 1.2.5]{BP17}}]
Let $ f$ and $g$ be dimension functions such that the ratio $f(r)/g(r) \to 0 $ as $ r \to 0 $. 
If $\mathcal{ H}^{g} (E) < \infty $, 
then $\mathcal{ H}^{f} (E) =0.$
\end{lemma}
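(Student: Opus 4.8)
The plan is a direct $\varepsilon$-argument comparing covers at small scales. Fix $\varepsilon>0$. Since $f(r)/g(r)\to 0$ as $r\to 0^+$, there exists $\delta>0$ such that
\[
f(r)\le \varepsilon\, g(r)\qquad\text{for all }0<r\le\delta.
\]
First I would record a monotonicity observation: any $\eta$-cover of $E$ is also an $\eta'$-cover for every $\eta'\ge\eta$, so $\mathcal{H}_\eta^g(E)$ is non-increasing in $\eta$; consequently $\mathcal{H}_\eta^g(E)\le \lim_{\eta\to 0^+}\mathcal{H}_\eta^g(E)=\mathcal{H}^g(E)<\infty$ for every $\eta>0$. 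In particular each $\mathcal{H}_\eta^g(E)$ is finite, so covers nearly realising the infimum exist.

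Next, for $0<\eta\le\delta$, choose by definition of $\mathcal{H}_\eta^g(E)$ a countable cover $E\subset\bigcup_i B_i$ with $|B_i|\le\eta$ and
\[
\sum_i g(|B_i|)\le \mathcal{H}_\eta^g(E)+1\le \mathcal{H}^g(E)+1.
\]
Since $|B_i|\le\eta\le\delta$ for every $i$, the choice of $\delta$ gives $f(|B_i|)\le\varepsilon\,g(|B_i|)$, and hence this same cover yields
\[
\mathcal{H}_\eta^f(E)\le \sum_i f(|B_i|)\le \varepsilon\sum_i g(|B_i|)\le \varepsilon\bigl(\mathcal{H}^g(E)+1\bigr).
\]
Letting $\eta\to 0^+$ gives $\mathcal{H}^f(E)\le\varepsilon\bigl(\mathcal{H}^g(E)+1\bigr)$, and since $\varepsilon>0$ was arbitrary we conclude $\mathcal{H}^f(E)=0$.

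There is essentially no obstacle here; the only points needing (routine) care are that the finite-scale content is bounded above by the full Hausdorff measure uniformly in the scale — which is what makes the right-hand side finite — and that $\varepsilon$ must be fixed before sending $\eta\to 0^+$, so that the scale-threshold $\delta$ does not depend on $\eta$ and the two limiting processes do not interfere.
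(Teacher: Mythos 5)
Your proof is correct; the paper itself gives no argument for this lemma, merely citing \cite[Lemma 1.2.5]{BP17}, and your $\varepsilon$--$\delta$ comparison of covers at small scales is exactly the standard proof found there. The two points you flag (uniform bound $\mathcal{H}^g_\eta(E)\le\mathcal{H}^g(E)<\infty$ and fixing $\varepsilon$ before letting $\eta\to 0^+$) are handled properly, so nothing further is needed.
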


When  $f(r) = r^s$ with $s \ge 0$, the measure $ \mathcal{H}^{f}$ coincides with the standard
{\em $s$--dimensional Hausdorff measure}
$\mathcal{H}^{s}.$ When $s \in \N$, the measure
$\mathcal{H}^{s}$ is proportional to 
$s$-dimensional Lebesgue measure. The \emph{Hausdorff dimension} of a set $E$ is 
$$
\dim_{\rm H} E 
:=  \inf \left\{ s\colon \mathcal{H}^{s} (E) =0 \right\}
=\sup \left\{ s\colon \mathcal{ H}^{s} (E) = \infty \right\}. 
$$ 
For more background, see \cite{det,falc}. 

A classical and widely-used method for establishing a lower bound on the Hausdorff $f$-measure of a set $E$ is the mass distribution principle.
The following version is a slight generalisation of {\cite[Lemma 1.2.8]{BP17}} and follows from the same argument.

\begin{lemma}
\label{p:MDP}
Let $ E $ be a Borel subset of $ \R^d $. If $ E $ supports a Borel probability measure $ \mu $ that satisfies
\[
\mu(B)\le cf(|B|),
\]
for some constant $ 0<c<\infty $ and for every ball $B$, then 
$\mathcal{H}_{\infty}^f(E)\ge 1/c$.
\end{lemma}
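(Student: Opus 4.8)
The plan is the standard mass distribution argument, exploiting that the Hausdorff content $\mathcal{H}_\infty^f$ is, by the definition given in \S\ref{Hausdorff}, an infimum over \emph{ball} covers, so the hypothesis $\mu(B)\le cf(|B|)$ applies verbatim to each member of any admissible cover. First I would fix an arbitrary countable cover $\{B_i\}_{i\ge 1}$ of $E$ by balls (no diameter restriction, since we are working with $\eta=\infty$). Using that $\mu$ is a probability measure supported on $E$ together with countable subadditivity, I would write
\[
1=\mu(E)\le \mu\Bigl(\bigcup_{i\ge 1}B_i\Bigr)\le \sum_{i\ge 1}\mu(B_i)\le c\sum_{i\ge 1}f(|B_i|).
\]
Hence $\sum_i f(|B_i|)\ge 1/c$ for every such cover.

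Taking the infimum over all ball covers of $E$ then yields $\mathcal{H}_\infty^f(E)\ge 1/c$, which is the claim. The only points that require a word of care are: (i) that the set function $\mu$ extends to a Borel (hence countably subadditive) measure, which is part of the hypothesis; and (ii) that the cover may be finite or countable and the balls may be arbitrarily large, but neither causes difficulty because the displayed chain of inequalities is insensitive to this. I would also remark that it suffices to consider covers by balls of positive diameter, since a ball of diameter $0$ is a point and, $\mu$ being non-atomic in our applications — or, more elementarily, since $f(0^+)=0$ forces $\mu(\{x\})\le cf(0)=0$ under the hypothesis — such degenerate balls contribute nothing to either side.

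I do not expect any genuine obstacle here: this is the classical mass distribution principle (Frostman's lemma, easy direction), and the ``slight generalisation'' over \cite[Lemma 1.2.8]{BP17} is purely cosmetic — replacing $r^s$ by a general dimension function $f$ and phrasing the hypothesis for balls rather than arbitrary sets — and the proof above is unchanged by it. If one wished to state the bound for covers by arbitrary sets $U_i$ rather than balls, one would simply enclose each $U_i$ in a ball of diameter $|U_i|$ (up to a factor $2$, harmless after adjusting $c$), but with the ball-based definition of $\mathcal{H}_\eta^f$ adopted in \S\ref{Hausdorff} this step is not even needed.
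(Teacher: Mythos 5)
Your argument is correct and is exactly the standard mass distribution argument that the paper invokes by citing \cite[Lemma 1.2.8]{BP17} and noting the same proof applies: cover $E$ by balls, use $1=\mu(E)\le\sum_i\mu(B_i)\le c\sum_i f(|B_i|)$, and take the infimum. Your side remarks on degenerate balls and on the ball-based definition of $\mathcal{H}^f_\infty$ are fine and do not change the substance.
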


A key tool in proving the divergence part of Theorem \ref{t3}  is the following `balls-to-open sets' mass transference principle. Originally established by Koivusalo and Rams \cite{KR21} in the context of Hausdorff dimension, this principle was later extended to Hausdorff measures by Zhong~\cite{Z21} and by He \cite{H25}, with conditions on the singular value function and on the Hausdorff content, respectively.

\begin{lemma}
[{\cite[Theorems 2.4 and 2.5]{H25}}]
\label{t:weaken}
Let $f$ be a dimension function such that $f\preceq d$. Let $(B_k)_{k=1}^\infty$ be a sequence of balls in $[0,1]^d$ with radii tending to 0 such that $\mathcal{L}^d(\limsup B_k)=1$. Let $(E_k)_{k=1}^\infty$ be a sequence of open sets such that $E_k \subseteq B_k$ for all $k$. If there exists a constant $c>0$ such that for any $k\ge 1$,
\[
\mathcal{H}_{\infty}^f (E_k)>c\mathcal{L}^d(B_k),
\]
then
\[
\mathcal{H}^f \Big(\limsup_{k\to\infty} E_k\Big)= \mathcal{H}^f([0,1]^d).\]
\end{lemma}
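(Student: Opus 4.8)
The plan is to establish this ``balls-to-open-sets'' mass transference principle by the Beresnevich--Velani/Koivusalo--Rams method, in the Hausdorff-measure refinement of Zhong \cite{Z21} and He \cite{H25}; I will only indicate the architecture. The statement is local in nature, so the first reduction is to show that there is a constant $\kappa>0$, depending only on $d$ and $c$, with
\[
\mathcal{H}^f_\infty\Big(\limsup_{k\to\infty}E_k\,\cap\,B\Big)\ \gg\ \mathcal{L}^d(B)\qquad\text{for every ball }B\subseteq[0,1]^d .
\]
Granting this, the passage to the global conclusion $\mathcal{H}^f(\limsup_k E_k)=\mathcal{H}^f([0,1]^d)$ is a covering argument in the spirit of the $\mathcal{H}^f_\infty$ local-to-global lemma of \cite{BV06}, which uses the hypothesis $f\preceq d$; I would cite it rather than reprove it. So fix $B$, and after an affine rescaling assume $B=[0,1]^d$.

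Next I would assemble two building blocks. The first is the Frostman-type duality for Hausdorff content: for an analytic set $E$, the quantity $\mathcal{H}^f_\infty(E)$ is comparable, with dimensional constants, to the supremum of $\mu(E)$ over finite Borel measures $\mu$ carried by a compact subset of $E$ with $\mu(B')\le f(|B'|)$ for all balls $B'$. Applying this to each $E_k$ and invoking $\mathcal{H}^f_\infty(E_k)>c\,\mathcal{L}^d(B_k)$, I fix such a measure $\mu_k$ on $E_k$ with $\mu_k(E_k)\gg c\,\mathcal{L}^d(B_k)$; these are the admissible ``mass allocations'' along the target sets. The second block is elementary: since the $B_k$ have radii tending to $0$ and $\mathcal{L}^d(\limsup_k B_k)=1$, inside any prescribed ball $Q$ and past any prescribed index one can, by a Vitali argument, choose finitely many pairwise disjoint $B_k\subseteq Q$ with $\sum_k\mathcal{L}^d(B_k)\ge\tfrac12\mathcal{L}^d(Q)$, and each chosen $B_k$ carries the measure $\mu_k$ supported on $E_k\subseteq B_k$.

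The core is then a Cantor-type construction inside $[0,1]^d$: build nested finite collections of balls $\mathcal{C}_0=\{[0,1]^d\}\supseteq\mathcal{C}_1\supseteq\cdots$ (each ball of $\mathcal{C}_{n+1}$ contained in one of $\mathcal{C}_n$) and, in tandem, a probability measure $\nu$ on the limit set $K=\bigcap_n\bigcup_{Q\in\mathcal{C}_n}Q$, arranged so that $K\subseteq\limsup_k E_k$ and $\nu(B')\ll f(|B'|)$ for every ball $B'$. The step out of a ball $Q\in\mathcal{C}_n$: select disjoint $B_k\subseteq Q$ as in the second block with arbitrarily large indices, let the children of $Q$ be small balls drawn from inside the supports of the $\mu_k\vert_{E_k}$ — so that each child lies in some $E_k$ with $k$ as large as we please, which forces $K\subseteq\limsup_k E_k$ — and split $\nu(Q)$ among the children according to the measures $\mu_k$, suitably normalised and with the child-diameters chosen small enough to discretise $\mu_k$ faithfully. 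Once $K$ and $\nu$ are built, the mass distribution principle (Lemma \ref{p:MDP}) yields $\mathcal{H}^f_\infty(K)\gg\nu(K)\gg\mathcal{L}^d([0,1]^d)$, which is exactly the local estimate above.

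The delicate point — and the reason this goes beyond the classical mass transference principle, where one recurses inside shrunk balls every part of which is usable — is that the $E_k$ are arbitrary open sets, so the recursion must be routed through them via the Frostman measures $\mu_k$, and the main difficulty is to perform the splitting step so that the limit measure $\nu$ obeys a single uniform bound $\nu(B')\ll f(|B'|)$ at all scales at once, with the implied constant not deteriorating over the infinitely many levels of the construction. This is precisely where the normalisation $\mathcal{H}^f_\infty(E_k)\ge c\,\mathcal{L}^d(B_k)$ and the condition $f\preceq d$ (which makes $f(r)/r^d$ non-increasing, so $\mathcal{L}^d$-comparable contributions do not erode in $\mathcal{H}^f$-content as the scales shrink) are used; controlling this interplay between scales, and the accompanying local-to-global covering lemma, is the heart of \cite{KR21, Z21, H25}, whose bookkeeping I would follow.
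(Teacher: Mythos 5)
The paper offers no proof of this lemma: it is quoted directly from He \cite[Theorems 2.4 and 2.5]{H25}, which in turn builds on Koivusalo--Rams \cite{KR21} and Zhong \cite{Z21}, so there is no internal argument to measure your proposal against. Your outline is a faithful reconstruction of the strategy of those sources: Frostman-type duality for $\mathcal{H}^f_\infty$ to extract measures $\mu_k$ on the open sets $E_k$, a Vitali selection of disjoint balls $B_k$ inside an arbitrary ball (using $\mathcal{L}^d(\limsup_k B_k)=1$ and radii tending to $0$), a nested Cantor construction threaded through the $E_k$, and the mass distribution principle (Lemma \ref{p:MDP}). However, as you concede, the one genuinely hard step --- arranging the splitting at each stage so that the limit measure $\nu$ obeys a single bound $\nu(B')\ll f(|B'|)$ uniformly over all scales and all levels of the construction --- is exactly the step you defer to \cite{KR21, Z21, H25}. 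Since that bookkeeping is the entire content of the cited theorems, your proposal as written is an annotated citation rather than an independent proof; to count as a proof it would need the explicit splitting rule and the scale-by-scale verification (scales below a selected $B_k$, handled by the Frostman bound for $\mu_k$ together with the normalisation $\mathcal{H}^f_\infty(E_k)>c\,\mathcal{L}^d(B_k)$; scales comparable to the selected balls, handled by their disjointness; larger scales, handled inductively using $f\preceq d$, which also supplies the doubling bound $f(2r)\le 2^d f(r)$ needed to pass from dyadic cubes to balls in the Frostman lemma).

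One point in your favour: the local-to-global reduction, which you attribute only loosely to \cite{BV06}, is sound and can be made precise without any elaborate covering argument. Since $f\preceq d$, either $f(r)\asymp r^d$ for small $r$, in which case $\mathcal{H}^f\asymp\mathcal{L}^d$ and your uniform bound $\mathcal{H}^f_\infty\big(\limsup_k E_k\cap B\big)\gg\mathcal{L}^d(B)$ forces the Borel set $\limsup_k E_k$ to have full Lebesgue, hence full $\mathcal{H}^f$, measure via the Lebesgue density theorem; or $f(r)/r^d\to\infty$, in which case $\mathcal{H}^f([0,1]^d)=\infty$ and, were $\mathcal{H}^f(\limsup_k E_k)$ finite, the finite measure $\mathcal{H}^f\big|_{\limsup_k E_k}$ would dominate a constant multiple of $\mathcal{L}^d$ on every ball, giving $\limsup_k E_k$ full Lebesgue measure and hence infinite $\mathcal{H}^f$-measure, a contradiction. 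So the missing content is not the reduction but the deferred Cantor-measure bookkeeping, which is precisely what \cite{H25} supplies and what the paper chooses simply to cite.
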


Equipped with these results, the main strategy for proving the divergence part of Theorem \ref{t3} is to verify that the relevant sets satisfy suitable lower bounds on their Hausdorff $f$-content. Lemma \ref{p:MDP} will play a crucial role.

\subsection{Auxiliary results}

The following lemmas are widely used to estimate the measure of a lim sup set in a probability space.

\begin{lemma}[Borel--Cantelli lemma] \label{BC lemma} Let $(\Omega, \mathcal{B}, \nu)$ be a probability space. Let 
$(E_n)_{n=1}^\infty$ be a sequence of measurable sets, and let
\[
E=\limsup E_n=\bigcap_{N=1}^{\infty}\bigcup_{n=N}^{\infty}E_n.
\]
If $\sum_{n= 1}^{\infty}\nu(E_n)<\infty$ then $\nu(E) = 0$. If instead $\sum_{n= 1}^{\infty}\nu(E_n) = \infty$, and the sets $E_n$ are pairwise independent, then
$\nu(E) = 1$.
\end{lemma}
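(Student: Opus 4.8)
The plan is to establish the two assertions separately by elementary measure theory. For the \emph{convergence part}, I would combine monotonicity with countable subadditivity of $\nu$: for each $N \in \N$ we have $E \subseteq \bigcup_{n \ge N} E_n$, so $\nu(E) \le \sum_{n \ge N} \nu(E_n)$, and since $\sum_n \nu(E_n) < \infty$ the right-hand side is the tail of a convergent series and tends to $0$ as $N \to \infty$, forcing $\nu(E) = 0$.

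For the \emph{divergence part}, which is the substantive half, I would use the second moment method. First note that $\sum_n \nu(E_n) = \infty$ forces $\sum_{n \ge M} \nu(E_n) = \infty$ for every $M \in \N$. Fix such an $M$, and for $N \ge M$ put $T_N = \sum_{n=M}^N \mathbf{1}_{E_n}$ and $\mu_N = \mathbb{E}[T_N] = \sum_{n=M}^N \nu(E_n)$, so that $\mu_N \to \infty$ as $N \to \infty$. Expanding the square, using that pairwise independence gives $\nu(E_m \cap E_n) = \nu(E_m)\nu(E_n)$ for $m \ne n$ and that $\mathbf{1}_{E_n}^2 = \mathbf{1}_{E_n}$, I obtain
\[
\mathbb{E}[T_N^2] = \sum_{n=M}^N \nu(E_n) + \sum_{\substack{M \le m,\, n \le N \\ m \ne n}} \nu(E_m)\nu(E_n) \le \mu_N + \mu_N^2 .
\]
Since $T_N$ vanishes off $\{T_N > 0\}$, the Cauchy--Schwarz inequality yields $\mu_N = \mathbb{E}\big[T_N \mathbf{1}_{\{T_N > 0\}}\big] \le \mathbb{E}[T_N^2]^{1/2}\,\nu(T_N > 0)^{1/2}$, hence
\[
\nu\bigg(\bigcup_{n=M}^N E_n\bigg) = \nu(T_N > 0) \ge \frac{\mu_N^2}{\mathbb{E}[T_N^2]} \ge \frac{\mu_N}{1 + \mu_N}.
\]
Letting $N \to \infty$ and using continuity of $\nu$ from below gives $\nu\big(\bigcup_{n \ge M} E_n\big) = 1$.

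Finally, since $E = \bigcap_{M \ge 1} \bigcup_{n \ge M} E_n$ is a decreasing intersection of sets each of full $\nu$-measure, continuity of the finite measure $\nu$ from above yields $\nu(E) = \lim_{M \to \infty} \nu\big(\bigcup_{n \ge M} E_n\big) = 1$. The only point requiring care is the bound on $\mathbb{E}[T_N^2]$: one must invoke \emph{only} pairwise independence rather than full mutual independence, which is legitimate precisely because the second moment expands into a sum over \emph{pairs} $(m,n)$. Everything else is routine.
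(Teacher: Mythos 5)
Your proof is correct. The paper states this lemma as a standard auxiliary result and gives no proof, so there is nothing internal to compare against; your argument is the classical one. The convergence half is the usual tail-subadditivity argument, and the divergence half is the standard second-moment (Erd\H{o}s--R\'enyi) proof: the key observation, which you make explicitly and correctly, is that $\mathbb{E}[T_N^2]$ expands over pairs $(m,n)$, so pairwise independence suffices to bound it by $\mu_N+\mu_N^2$, after which Cauchy--Schwarz and continuity of $\nu$ from below and above finish the job. Note also that the divergence half can be obtained even more quickly from the Chung--Erd\H{o}s inequality (Lemma \ref{CE} of the paper): under pairwise independence the denominator $\sum_{1\le m,n\le N}\nu(E_m\cap E_n)$ collapses to $\mu_N+\mu_N^2$, and the inequality then bounds $\nu(E)$ for the limsup set directly, without the separate step of intersecting the full-measure unions over $M$; your route is self-contained and equally valid.
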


We note that the divergence part of the above lemma requires the independence of the sets $E_n$. In applications where pairwise independence fails, the following lemma is often used as a proxy.

\begin{lemma}[Chung--Erd\H{o}s inequality \cite{Chung}]\label{CE}
If $\sum_{n\ge 1}\nu(E_n)=\infty$, then $$
\nu(E)\ge \limsup_{N\to \infty}\frac{(\sum_{1\le n\le N}\nu(E_n))^2}{\sum_{1\le m,n\le N}\nu(E_m\cap E_n)}.
$$
\end{lemma}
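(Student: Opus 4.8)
The final statement is the Chung--Erd\H{o}s inequality (Lemma \ref{CE}), a purely probability-theoretic fact, so I would give the classical second-moment argument. Fix $N$ large and set
\[
S_N = \sum_{1 \le n \le N} \mathbbm{1}_{E_n}, \qquad A_N = \bigcup_{1 \le n \le N} E_n.
\]
The key observation is that $S_N$ is supported on $A_N$, i.e.\ $S_N = S_N \mathbbm{1}_{A_N}$ pointwise, since $S_N$ vanishes outside $A_N$. Therefore, by the Cauchy--Schwarz inequality applied to the integrand $S_N = S_N \mathbbm{1}_{A_N}$,
\[
\Bigl(\mathbb{E}[S_N]\Bigr)^2 = \Bigl(\mathbb{E}[S_N \mathbbm{1}_{A_N}]\Bigr)^2 \le \mathbb{E}[S_N^2] \cdot \mathbb{E}[\mathbbm{1}_{A_N}^2] = \mathbb{E}[S_N^2] \cdot \nu(A_N).
\]
Rearranging gives
\[
\nu(A_N) \ge \frac{\bigl(\mathbb{E}[S_N]\bigr)^2}{\mathbb{E}[S_N^2]} = \frac{\bigl(\sum_{1\le n\le N}\nu(E_n)\bigr)^2}{\sum_{1\le m,n\le N}\nu(E_m\cap E_n)},
\]
where the numerator uses linearity of expectation and the denominator uses $\mathbb{E}[S_N^2] = \sum_{1\le m,n\le N}\mathbb{E}[\mathbbm{1}_{E_m}\mathbbm{1}_{E_n}] = \sum_{1\le m,n\le N}\nu(E_m\cap E_n)$.

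To conclude, I would pass to the limsup set. Since $E = \limsup E_n = \bigcap_{N\ge 1} \bigcup_{n \ge N} E_n$, for any fixed $M$ we have $E \subseteq \bigcup_{n \ge M} E_n$, and more to the point $E \subseteq \bigcup_{n=M}^{N} E_n \cup \bigcup_{n>N} E_n$; the cleanest route is to note that the bound above, applied to the tail sums $\sum_{M \le n \le N} \nu(E_n)$, yields $\nu\bigl(\bigcup_{M \le n \le N} E_n\bigr) \ge (\sum_{M\le n\le N}\nu(E_n))^2 / \sum_{M\le m,n\le N}\nu(E_m\cap E_n)$, and since $\sum \nu(E_n) = \infty$ the numerator can be made arbitrarily large, forcing $\nu(\bigcup_{n \ge M} E_n) = 1$ for every $M$ by taking $N \to \infty$ along a suitable subsequence; hence $\nu(E) = \lim_{M\to\infty}\nu(\bigcup_{n\ge M}E_n) = 1$. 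For the precise constant stated, one observes directly that $\nu(E) \ge \nu(\bigcup_{1 \le n \le N} E_n) \ge (\sum_{1\le n\le N}\nu(E_n))^2 / \sum_{1\le m,n\le N}\nu(E_m\cap E_n)$ is false in general (the middle set grows), so instead one takes $\limsup_{N\to\infty}$ of the lower bound on $\nu(\bigcup_{n\le N}E_n)$: since $\nu$ is monotone and continuous from below, $\nu(E) \ge \limsup_{N}\nu(\bigcup_{n\le N}E_n) - $ correction; the standard fix is to replace each $E_n$ by $E_n$ itself but intersect with the event that infinitely many occur, or simply to observe $\limsup_N \nu(\bigcup_{M\le n\le N}E_n) = \nu(\bigcup_{n\ge M}E_n)$ and let $M\to\infty$, which gives exactly the asserted $\limsup$ form once one notes $\nu(\bigcup_{n\ge M}E_n) \to \nu(E)$.

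\textbf{Main obstacle.} The only subtlety --- and it is minor --- is the passage from the finite-$N$ estimate on $\nu(\bigcup_{n\le N} E_n)$ to the stated bound on $\nu(E)$, because $E$ is strictly smaller than $\bigcup_{n\le N}E_n$. The resolution is that the Cauchy--Schwarz bound is insensitive to truncating the \emph{initial} segment: applying it to $\sum_{M\le n\le N}\mathbbm{1}_{E_n}$ for each fixed $M$ shows $\nu(\bigcup_{n\ge M}E_n) \ge \limsup_{N}(\sum_{M\le n\le N}\nu(E_n))^2/\sum_{M\le m,n\le N}\nu(E_m\cap E_n)$, and since the right-hand side is bounded below by $\limsup_N (\sum_{1\le n\le N}\nu(E_n))^2/\sum_{1\le m,n\le N}\nu(E_m\cap E_n)$ for each $M$ (the removed terms only help once $\sum\nu(E_n)=\infty$), letting $M\to\infty$ and using continuity from above of $\nu$ along the decreasing sequence $\bigcup_{n\ge M}E_n \downarrow E$ yields the claim. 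Everything else is a one-line application of Cauchy--Schwarz.
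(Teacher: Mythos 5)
The paper does not prove this lemma at all: it is quoted from Chung--Erd\H{o}s \cite{Chung} (see also \cite{BV2023}), so your attempt can only be measured against the standard second-moment proof --- which is indeed the route you take. Your Cauchy--Schwarz core is correct: with $S_N=\sum_{n\le N}\mathbf{1}_{E_n}$ and $A_N=\bigcup_{n\le N}E_n$, the identity $S_N=S_N\mathbf{1}_{A_N}$ gives $\nu(A_N)\ge \big(\sum_{n\le N}\nu(E_n)\big)^2\big/\sum_{m,n\le N}\nu(E_m\cap E_n)$, and the lemma follows by applying this to the blocks $M\le n\le N$, letting $N\to\infty$ (continuity from below), and then letting $M\to\infty$ using continuity from above along $\bigcup_{n\ge M}E_n\downarrow E$.

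However, the middle of your concluding paragraph contains a genuinely false step: divergence of $\sum_n\nu(E_n)$ does \emph{not} force $\nu\big(\bigcup_{n\ge M}E_n\big)=1$, and the conclusion of the lemma is not $\nu(E)=1$ --- the paper itself remarks that in many cases the inequality only yields $\nu(E)>0$. Taking every $E_n$ equal to one fixed set $F$ with $\nu(F)=\tfrac12$ gives $\sum_n\nu(E_n)=\infty$ while the ratio equals $\tfrac12$ for every $N$: the numerator grows, but so does the denominator, so ``the numerator can be made arbitrarily large'' proves nothing. You do retract this and, in your ``main obstacle'' paragraph, land on the correct argument; but there the key comparison $\limsup_N R_{M,N}\ge \limsup_N R_{1,N}$, where $R_{M,N}$ denotes the ratio formed from indices in $[M,N]$, is justified only by a parenthetical remark. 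It needs its one line: writing $a_N=\sum_{n\le N}\nu(E_n)$ and $c_M=\sum_{n<M}\nu(E_n)$ (a fixed finite number), removing the initial block can only decrease the denominator, while the numerator satisfies $(a_N-c_M)^2=a_N^2(1-c_M/a_N)^2$ with $c_M/a_N\to 0$ by divergence; hence $R_{M,N}\ge R_{1,N}(1-c_M/a_N)^2$ and the limsups compare as claimed. With that line supplied and the erroneous ``$=1$'' detour deleted, your proof is the standard one and is complete.
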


This is also known as the divergence Borel--Cantelli lemma \cite{BV2023}. In many cases, the Chung--Erd\H{o}s inequality only tells us that $\nu(E)>0$. To obtain a full measure result for $E$, one may apply the inequality locally and then deduce that $\nu(E)=1$
via the following lemma.

\begin{lemma}[{\cite[Lemma 6]{BDV06}}]
\label{Full meas}
Let $(\Omega, d)$ be a metric space with a finite measure $\nu$ for which every open set is measurable. Suppose $E \subseteq \Omega$ is a Borel set and $h: [0,\infty) \to [0,\infty)$ is an increasing function such that $h(x) \to 0$ as $x \to 0^+$. If the inequality
$$\nu(E\cap U)\ge h(\nu(U)),$$
holds for every open set $U \subset \Omega$, then $\nu(E) = \nu(\Omega)$. 
\end{lemma}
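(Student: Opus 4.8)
The plan is to argue by contradiction, using outer regularity of $\nu$. Suppose $\nu(E) < \nu(\Omega)$ and put $\delta := \nu(\Omega \setminus E) = \nu(\Omega) - \nu(E) > 0$; since $E$ is Borel, both $E$ and $\Omega \setminus E$ are $\nu$-measurable. The strategy is to surround $\Omega \setminus E$ by an open set $U$ so tightly that $\nu(E \cap U)$ is forced to be arbitrarily small, while $\nu(U)$ stays bounded below by $\delta$; the hypothesis $\nu(E \cap U) \ge h(\nu(U))$ will then clash with the positivity of $h$ at $\delta$. Note that, since $h$ is (strictly) increasing with $h(x) \to 0$ as $x \to 0^+$, it is positive on $(0,\infty)$, so $h(\delta) > 0$.

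First I would invoke the classical fact that a finite Borel measure on a metric space is outer regular: for every Borel set $B$ and every $\eta > 0$ there is an open $U \supseteq B$ with $\nu(U) \le \nu(B) + \eta$. (The assumption that every open set is measurable ensures that $\nu$ is defined on all Borel sets and that the $U$ produced is measurable.) Applying this to $B = \Omega \setminus E$ with some $\eta \in (0, h(\delta))$ yields an open set $U \supseteq \Omega \setminus E$ with $\nu(U) < \delta + \eta$. The key elementary observation is that $U \setminus E = \Omega \setminus E$: the inclusion $U \subseteq \Omega$ gives $U \setminus E \subseteq \Omega \setminus E$, while $\Omega \setminus E \subseteq U$ gives the reverse inclusion. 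Hence, by additivity of $\nu$ on the disjoint decomposition $U = (E \cap U) \sqcup (U \setminus E)$,
\[
\nu(E \cap U) = \nu(U) - \nu(U \setminus E) = \nu(U) - \delta < \eta < h(\delta).
\]
On the other hand, $\Omega \setminus E \subseteq U$ forces $\nu(U) \ge \delta$, so monotonicity of $h$ gives $h(\nu(U)) \ge h(\delta)$; applying the hypothesis to the open set $U$ then yields $\nu(E \cap U) \ge h(\nu(U)) \ge h(\delta)$, contradicting the displayed inequality. Therefore $\nu(E) = \nu(\Omega)$.

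The only point that requires care — and the main obstacle, such as it is — is the outer regularity input. For metric spaces this needs no extra hypotheses: the collection of Borel sets approximable from outside by open sets (equivalently, from inside by closed sets) is a $\sigma$-algebra containing the closed sets, hence all Borel sets, and the finite total mass lets one pass freely between a set and its complement. It is worth flagging that one genuinely uses $h > 0$ on $(0,\infty)$ rather than merely $h(0^+) = 0$: were $h$ allowed to vanish on an initial interval, the conclusion would fail (for instance on $\Omega = [0,2]$ with Lebesgue measure, $E = [0,1]$, and a suitable $h$ supported near $2$), so this positivity is an implicit standing requirement on the dimension-function-type $h$. Everything else is bookkeeping with additivity of $\nu$.
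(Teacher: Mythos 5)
Your proof is correct: the paper gives no argument of its own for this lemma (it is quoted from \cite{BDV06}), and your outer-regularity argument --- enclosing $\Omega\setminus E$ in an open $U$ with $\nu(E\cap U)\le\eta<h(\delta)$ while $\nu(U)\ge\delta$ forces $h(\nu(U))\ge h(\delta)$ --- is precisely the standard proof of the cited result. Your side remark that ``increasing'' must be read strictly (equivalently, $h>0$ on $(0,\infty)$) is also well taken, since that positivity is exactly what the contradiction uses and the statement is false for merely non-decreasing $h$.
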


The next lemma establishes a fundamental connection between the Fourier decay of a measure and the equidistribution properties of sequences of expanding integral matrices.

\begin{lemma}
\label{equi}
Let $\mu$ be a probability measure on $\mathbb{R}^{d}$ with polylogarithmic Fourier decay. Let $(A_n)_{n\in\N}$ be a sequence of expanding integral matrices. Assume that the inequality \eqref{sing} holds. Then the orbit $(A_n{\bm \alpha})_{n\ge 1}$ is equidistributed modulo $\bZ^d$ for $\mu$-almost every ${\bm \alpha}.$
\end{lemma}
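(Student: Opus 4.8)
The plan is to verify Weyl's criterion: the orbit $(A_n{\bm \alpha})_{n\ge 1}$ equidistributes modulo $\bZ^d$ if and only if, for every $\bk \in \bZ^d \setminus \{\bzero\}$,
\[
S_N({\bm \alpha},\bk) := \frac1N \sum_{n=1}^N e\bigl(\langle \bk, A_n {\bm \alpha}\rangle\bigr) = \frac1N \sum_{n=1}^N e\bigl(\langle A_n^T \bk, {\bm \alpha}\rangle\bigr) \longrightarrow 0 \quad (N \to \infty).
\]
Since $\bZ^d \setminus \{\bzero\}$ is countable, and the set of ${\bm \alpha}$ for which $(A_n{\bm \alpha})$ equidistributes is Borel (a countable intersection of sets cut out by limits of continuous functions), it suffices to fix $\bk \ne \bzero$ and prove that $S_N(\cdot,\bk) \to 0$ for $\mu$-almost every ${\bm \alpha}$; intersecting the resulting conull sets over all $\bk$ then gives the lemma.

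The first step is a second-moment estimate. Expanding the square and integrating term by term,
\[
\int \bigl| S_N({\bm \alpha},\bk)\bigr|^2 \d\mu({\bm \alpha}) = \frac1{N^2}\sum_{m,n=1}^N \widehat\mu\bigl((A_m^T - A_n^T)\bk\bigr).
\]
The $N$ diagonal terms each equal $\widehat\mu(\bzero)=1$. For $m \ne n$, condition \eqref{sing} guarantees that $A_m - A_n$ is invertible, and since the singular values of a matrix and its transpose agree, $\bigl|(A_m^T - A_n^T)\bk\bigr|_2 \ge \sigma(A_m-A_n)\,|\bk|_2$; combining this with the comparison between $|\cdot|_2$ and $|\cdot|_\infty$ and with \eqref{sing} yields
\[
\bigl|(A_m^T - A_n^T)\bk\bigr|_\infty \gg |m-n|^{1/s}(\log_+ |m-n|)^{1/s + \eps}.
\]
Feeding this into the Fourier-decay hypothesis $\widehat\mu(\bt) \ll |\bt|_\infty^{-s}$ bounds each off-diagonal term by $\ll |m-n|^{-1}(\log_+|m-n|)^{-1-s\eps}$ (for the finitely many small values of $|m-n|$ one uses the trivial bound $|\widehat\mu|\le 1$). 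As there are at most $2N$ pairs $(m,n) \in \{1,\dots,N\}^2$ with $|m-n| = j$ for each $j$, the off-diagonal contribution is
\[
\ll N \sum_{j \ge 1} \frac1{j (\log_+ j)^{1+s\eps}} \ll N,
\]
the series converging because $s\eps > 0$. Hence $\int |S_N|^2 \d\mu \ll N^{-1}$.

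To pass from mean-square smallness to almost-everywhere convergence, I would restrict to the subsequence $N_\ell = \ell^2$. Then $\sum_\ell \int |S_{N_\ell}|^2 \d\mu \ll \sum_\ell \ell^{-2} < \infty$, so by monotone convergence $\sum_\ell |S_{N_\ell}({\bm \alpha},\bk)|^2 < \infty$ — in particular $S_{N_\ell}({\bm \alpha},\bk) \to 0$ — for $\mu$-almost every ${\bm \alpha}$. The full sequence follows by filling in the gaps: for $N_\ell \le N < N_{\ell+1}$ one has
\[
\bigl|S_N({\bm \alpha},\bk)\bigr| \le \frac{N_\ell}{N}\bigl|S_{N_\ell}({\bm \alpha},\bk)\bigr| + \frac{N_{\ell+1}-N_\ell}{N_\ell} \le \bigl|S_{N_\ell}({\bm \alpha},\bk)\bigr| + \frac{2\ell+1}{\ell^2} \longrightarrow 0.
\]
Thus $S_N(\cdot,\bk) \to 0$ $\mu$-almost everywhere, and Weyl's criterion completes the proof.

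The heart of the matter is the off-diagonal estimate: the scheme hinges on that sum being $O(N)$, and this is exactly what the logarithmic refinement $(\log_+|m-n|)^{1/s+\eps}$ in \eqref{sing} is calibrated to provide, via the convergence of $\sum_j j^{-1}(\log_+ j)^{-1-s\eps}$. (If one only has polylogarithmic rather than polynomial Fourier decay, the off-diagonal bound weakens to $\ll N(\log N)^{-s}$, and one must replace $N_\ell = \ell^2$ by a sparser subsequence such as $N_\ell = \lfloor \exp(\ell^\theta)\rfloor$ with $1/s < \theta < 1$ — still leaving $(N_{\ell+1}-N_\ell)/N_\ell \to 0$ — which requires $s>1$.) Everything else — measurability of the equidistribution set, transpose-invariance of singular values, and the norm comparison — is routine.
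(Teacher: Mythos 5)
Your argument is correct and is, in essence, the argument the paper has in mind: the paper gives no details for Lemma \ref{equi}, merely citing the proof of Theorem 1.7 of Tan--Zhou \cite{TZ25}, which runs on the same Weyl-criterion-plus-second-moment machinery (there organised via the Davenport--Erd\H{o}s--LeVeque criterion, which your subsequence-plus-gap-filling step reproves by hand). Two small points. First, the lemma is stated for measures with \emph{polylogarithmic} Fourier decay, whereas your main computation uses the polynomial decay $\widehat{\mu}(\mathbf{t})\ll|\mathbf{t}|_\infty^{-s}$ assumed in Theorem \ref{t1}; your closing parenthetical does handle the polylogarithmic case correctly (sparser subsequence $N_\ell=\lfloor\exp(\ell^\theta)\rfloor$ with $1/s<\theta<1$), and the resulting requirement that the polylogarithmic exponent exceed $1$ is harmless, since the cited theorem assumes $s>d+1$ and in the application within this paper $\mu$ in fact has polynomial decay. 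Second, in that parenthetical the off-diagonal contribution under polylogarithmic decay is $\ll N^{2}(\log N)^{-s}$ rather than $N(\log N)^{-s}$, giving $\int|S_N|^2\,\mathrm{d}\mu\ll(\log N)^{-s}$; this corrected bound is exactly what your choice of subsequence is calibrated to, so the conclusion you draw is unaffected.
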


\begin{proof}
A proof follows along similar lines to that of {\cite[Theorem 1.7]{TZ25}}.
\end{proof}

The following calculation will play a pivotal role in the estimation procedure.

\begin{lemma}
\label{elementary}
For 
$s, \eps >0$,
\[
S := \sum_{m=1}^{\infty}\sum_{{\bf k} \in \mathbb{Z}^{d} \setminus \{{\bf 0}\}} (m^\frac{1}{s}(\log_+ m)^{\frac{1}{s}+\eps}|{\bf k}|_{\infty})^{-s} \prod_{i=1}^{d}
\min \{ 1, |k_i|^{-1} \}  
< \infty.
\]
\end{lemma}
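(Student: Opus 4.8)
The plan is to separate the sum over $\bk$ from the sum over $m$, since the factor $(\log_+ m)^{-\eps s}$ will provide exactly enough extra decay to make the $m$-sum converge once the $\bk$-sum has been handled. First I would write
\[
S = \Big(\sum_{m=1}^{\infty} m^{-1}(\log_+ m)^{-1-\eps s}\Big)
\Big(\sum_{{\bf k} \in \mathbb{Z}^{d} \setminus \{{\bf 0}\}} |{\bf k}|_{\infty}^{-s} \prod_{i=1}^{d}\min\{1,|k_i|^{-1}\}\Big),
\]
using the fact that the summand factorises as $\big(m^{1/s}(\log_+m)^{1/s+\eps}\big)^{-s}$ times a function of $\bk$ alone. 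Wait -- I need to be slightly careful here, because $s$ need not equal $d$, so the $\bk$-sum is not automatically convergent from the $|\bk|_\infty^{-s}$ factor alone; the product $\prod_i \min\{1,|k_i|^{-1}\}$ must be used. The $m$-sum $\sum_m m^{-1}(\log_+ m)^{-1-\eps s}$ converges by the integral test (or Cauchy condensation), since $\eps s > 0$. So everything reduces to showing
\[
T := \sum_{{\bf k} \in \mathbb{Z}^{d} \setminus \{{\bf 0}\}} |{\bf k}|_{\infty}^{-s} \prod_{i=1}^{d}\min\{1,|k_i|^{-1}\} < \infty.
\]

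To bound $T$, I would dyadically decompose according to the size of $|\bk|_\infty$. Fix $j \ge 0$ and consider those $\bk$ with $2^j \le |\bk|_\infty < 2^{j+1}$; on this shell $|\bk|_\infty^{-s} \asymp 2^{-js}$. For such $\bk$, some coordinate has $|k_{i_0}| \asymp 2^j$, and for that coordinate $\min\{1,|k_{i_0}|^{-1}\} \asymp 2^{-j}$. The remaining $d-1$ coordinates each range over $|k_i| < 2^{j+1}$, and
\[
\sum_{0 \le |k| < 2^{j+1}} \min\{1,|k|^{-1}\} \ll 1 + \sum_{1 \le |k| < 2^{j+1}} |k|^{-1} \ll j+1.
\]
Multiplying these contributions (and summing over which coordinate is the large one, a factor of $d$), the $j$-th shell contributes $\ll 2^{-js}\cdot 2^{-j}\cdot (j+1)^{d-1}$, so
\[
T \ll \sum_{j \ge 0} 2^{-j(s+1)}(j+1)^{d-1} < \infty,
\]
since $s+1 > 0$ forces geometric decay that dominates the polynomial factor $(j+1)^{d-1}$. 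This completes the argument.

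The only genuinely delicate point is the bookkeeping in the dyadic estimate: making sure that isolating the large coordinate does not overcount (handled by the factor $d$ and by allowing the other coordinates to be zero), and that the $\min\{1,|k|^{-1}\}$ factor in the non-dominant directions is what supplies the logarithmic-in-$2^j$ — i.e.\ linear-in-$j$ — growth rather than anything worse. I expect no real obstacle: the key structural observation is simply that the summand splits as a product of a function of $m$ with a function of $\bk$, after which the $\eps$ in the exponent of $\log_+ m$ gives room to spare in the $m$-sum, and the extra $\prod_i \min\{1,|k_i|^{-1}\}$ gives more than enough decay (one extra power of $|\bk|_\infty$, plus logarithmic savings in the transverse directions) in the $\bk$-sum.
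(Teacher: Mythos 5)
Your argument is essentially the paper's: the summand factorises into an $m$-part and a $\bk$-part, the $m$-sum converges because $\eps s>0$, and the $\bk$-sum is handled by isolating the coordinate achieving $|\bk|_\infty$, extracting one extra power of decay from its $\min\{1,|k_{i_0}|^{-1}\}$ factor, and letting the transverse coordinates contribute harmonic sums of size $\ll (\log |\bk|_\infty)^{d-1}$; the paper sums directly over the value $k_1$ of the dominant coordinate (getting $\sum_{k_1} k_1^{-1-s}(\log k_1)^{d-1}<\infty$) rather than dyadically, which is only a cosmetic difference.

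One bookkeeping slip: your shell bound $\ll 2^{-j(s+1)}(j+1)^{d-1}$ is too strong. You used the per-term bound $\min\{1,|k_{i_0}|^{-1}\}\asymp 2^{-j}$ but did not multiply by the $\asymp 2^{j}$ admissible values of the dominant coordinate $k_{i_0}$ in the shell; summing $\min\{1,|k_{i_0}|^{-1}\}$ over those values gives $O(1)$, not $2^{-j}$ (indeed, already the vectors with all other coordinates zero contribute $\asymp 2^{-js}$ to the shell). The correct shell bound is $\ll 2^{-js}(j+1)^{d-1}$, which still sums since $s>0$, so the conclusion $T<\infty$ and hence the lemma are unaffected.
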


\begin{proof}
We compute that
\begin{align*}
S &\le \sum_{m=1}^{\infty}
\sum_{j=1}^{d}\sum_{\substack{{\bf k}\in \mathbb{Z}^{d} \setminus \{{\bf 0}\} \\ |{\bf k}|_{\infty}=|k_j|}} 
(m^\frac{1}{s}(\log_+ m)^{\frac{1}{s}+\eps}|{\bf k}|_{\infty})^{-s}
\prod_{i=1}^{d}\min \{1, |k_i|^{-1}\} \\
&\le 2d \sum_{m=1}^\infty
\sum_{k_1=1}^\infty (m^\frac{1}{s}(\log_+ m)^{\frac{1}{s}+\eps}k_1)^{-s} k_1^{-1}
\left(
\sum_{|k| \le k_1} \min \{ 1, |k|^{-1} \} \right)^{d-1}.
\end{align*}
Therefore
\[
S \ll_d \sum_{m=1}^\infty m^{-1}(\log_+m)^{-1-s\eps} \sum_{k_1=1}^\infty
k_1^{-1-s}(\log k_1)^{d-1}< \infty.
\]
\iffalse
We partition $\N$ into $d+1$ classes, setting $\delta_{d+1}=1$ for convenience. Let $\Omega_1=\{n\in\N\colon n\ge \frac{1}{\delta_1}\}.$ Then
\begin{align*}
\sum_{|k_1|\in\Omega_1}|k_1|^{-1}(\log(K^{m-1}|{k}_{1}|))^{-s}&\cdot \Big(\delta_2+\sum_{1\le |k_2|<\frac{1}{\delta_2}}\delta_2+\sum_{\frac{1}{\delta_2}\le |k_2|\le|k_1|}|k_2|^{-1}\Big) \\ \cdots&\Big(\delta_d+\sum_{1\le |k_d|<\frac{1}{\delta_d}}\delta_d+\sum_{\frac{1}{\delta_d}\le |k_d|\le|k_1|}|k_d|^{-1}\Big)\\
\ll (\log(K^{m-1}))^{-(s-d)}.
\end{align*}
Let $1 \le i \le d$ and $\Omega_{i+1}=\{n\in\N\colon \frac{1}{\delta_{i+1}}\le n\le \frac{1}{\delta_{i}}\}$. Then
\begin{align*}
\sum_{|k_1|\in\Omega_{i+1}}&\delta_1(\log(K^{m-1}|{k}_{1}|))^{-s}\cdot \Big(\delta_2+\sum_{1\le |k_2|<|k_1|}\delta_2\Big)\cdots  \Big(\delta_i+\sum_{1\le |k_i|\le |k_1|}\delta_i\Big)  \\&\Big(\delta_{i+1}+\sum_{1\le |k_{i+1}|\le \frac{1}{\delta_{i+1}}}\delta_{i+1}+\sum_{ \frac{1}{\delta_{i+1}}\le |k_{i+1}|\le |k_1|}|k_{i+1}|^{-1}\Big)\cdots\Big(\delta_{d}+\sum_{1\le |k_{d}|\le \frac{1}{\delta_{d}}}\delta_{d}+\sum_{ \frac{1}{\delta_{d}}\le |k_{d}|\le |k_1|}|k_{d}|^{-1}\Big)
\\
&\ll (\log(K^{m-1}))^{-(s-(d-i))}.
\end{align*}
Since $s>d+1,$ combining our estimates gives
 $$S\ll \sum_{m=1}^{\infty}(\log(K^{m-1}))^{-(s-d)}<\infty.$$
\fi
\end{proof}

\section{Proof of Theorem \ref{t1}}

Recall that 
\begin{align*}
\mathcal{T}_{{\bm \alpha}}(\mathcal{A}, {\bf r})&=\{{\bm \beta}\in[0,1)^{d}\colon A_n{\bm \alpha} \in {\bm \beta}+\mathcal{R}_n({\bf r})\!\!\!\!\pmod1 \text{ i.m. }  n\in\mathbb{N} \}\\
&= \limsup E_n,
\end{align*}
where 
$$E_n=\{{\bm \beta}\in[0,1)^{d}\colon A_n{\bm \alpha} \in {\bm \beta}+\mathcal{R}_n({\bf r})\!\!\!\!\pmod1\}.$$

\subsection{Convergence part}

We assume that 
$$\sum_{n=1}^{\infty}\prod_{i=1}^{d}r_i(n)<\infty.$$
It is readily checked that 
$$\mathcal{L}^{d}(E_n)=2^{d}\prod_{i=1}^{d}r_i(n).$$
Now Lemma \ref{BC lemma} yields 
\[
\cL^d(\mathcal{T}_{{\bm \alpha}}(\mathcal{A}, {\bf r})) = 0
\]
for any ${\bm \alpha}\in[0,1)^{d}.$

\subsection{Divergence part}

In this case, we assume that the series
\[
\sum_{n=1}^{\infty}\prod_{i=1}^{d}r_i(n) = \infty.
\]
We proceed in two stages:
\begin{enumerate}
\item Construct a suitable set $G\subseteq[0,1)^{d}$ such that $\mu(G) = 1$.
\item Show that  $\cL^d(\mathcal{T}_{{\bm \alpha}}(\mathcal{A}, {\bf r})) = 1$ for every ${\bm \alpha}\in G.$
\end{enumerate}

\bigskip

For $N, p\in\mathbb{N},$  define
\begin{equation*}
B_{N}(p) =
\left\{
\begin{split}
{\bm \alpha}\in[0,1)^{d}:
&\sum_{1\le m,n\le N} \mathcal{L}^{d}(E_m\cap E_n) \\ &> p\int\sum_{1\le m,n\le N}\mathcal{L}^{d}(E_m\cap E_n){\rm d}\mu 
\end{split}
\right\}.
\end{equation*}
Chebyshev's inequality yields $\mu(B_N(p)) \le 1/p$, whence
$$\mu(B_{N}^{c}(p))\ge 1-\frac{1}{p},$$
where $B_{N}^{c}(p)$ is the complement of $B_{N}(p).$   Defining
$$B(p)=\limsup_{N\to\infty}B_{N}^{c}(p), \qquad B=\bigcup_{p=1}^{\infty}B(p),$$
we have $\mu(B) = 1$ by the reverse Fatou lemma.

\begin{lemma}
\label{qusi}
In the context of Theorem \ref{t1}, for $N$ sufficiently large, we have
$$\int\sum_{1\le m,n\le N}\mathcal{L}^{d}(E_m\cap E_n){\rm d}\mu({\bm \alpha})\le C_1 \left(\sum_{1\le n\le N}\mathcal{L}^{d}(E_n)\right)^{2},$$
where $C_1 = C_1(d,c,s,\eps)$.
\end{lemma}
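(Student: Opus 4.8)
The plan is to expand the overlap $\mathcal{L}^{d}(E_m\cap E_n)$ as a Fourier series in ${\bm \alpha}$ and integrate it term by term against $\mu$. Write $g_n$ for the $\mathbb{Z}^d$-periodic indicator of the centred box $\mathcal{R}_n({\bf r})$, so that $E_n=\{{\bm \beta}:g_n({\bm \beta}-A_n{\bm \alpha})=1\}$ on the torus and
\[
\mathcal{L}^{d}(E_m\cap E_n)=\int_{[0,1)^d}g_m({\bm \beta}-A_m{\bm \alpha})\,g_n({\bm \beta}-A_n{\bm \alpha})\,{\rm d}{\bm \beta}=(g_m\ast g_n)\big((A_n-A_m){\bm \alpha}\big),
\]
using that $g_n$ is even. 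The function $g_m\ast g_n$ is continuous, and its Fourier coefficients $\widehat{g_m}({\bf k})\widehat{g_n}({\bf k})$ form an $\ell^1$ sequence, because $|\widehat{g_n}({\bf k})|\le\prod_{i}\min\{2r_i(n),(\pi|k_i|)^{-1}\}\le\prod_{i}\min\{1,|k_i|^{-1}\}$ while also $|\widehat{g_n}({\bf k})|\le\widehat{g_n}({\bf 0})=\mathcal{L}^{d}(E_n)$; hence $g_m\ast g_n$ equals its absolutely convergent Fourier series \emph{everywhere}, which is what will let us integrate it against the possibly singular measure $\mu$. Since $\langle{\bf k},(A_n-A_m){\bm \alpha}\rangle=\langle(A_n-A_m)^{T}{\bf k},{\bm \alpha}\rangle$, integrating and interchanging sum and integral (justified by the $\ell^1$ bound above) gives
\[
\int\mathcal{L}^{d}(E_m\cap E_n)\,{\rm d}\mu=\mathcal{L}^{d}(E_m)\,\mathcal{L}^{d}(E_n)+\sum_{{\bf k}\neq{\bf 0}}\widehat{g_m}({\bf k})\,\widehat{g_n}({\bf k})\,\widehat{\mu}\big(-(A_n-A_m)^{T}{\bf k}\big),
\]
the main term being the ${\bf k}={\bf 0}$ contribution.

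It then remains to control the error $\mathcal{E}:=\sum_{1\le m\neq n\le N}\sum_{{\bf k}\neq{\bf 0}}|\widehat{g_m}({\bf k})|\,|\widehat{g_n}({\bf k})|\,|\widehat{\mu}((A_n-A_m)^{T}{\bf k})|$. By \eqref{sing} we have $\sigma(A_n-A_m)>0$ for $m\neq n$, so $A_n-A_m$ is invertible and $(A_n-A_m)^{T}{\bf k}$ is a non-zero integer vector whenever ${\bf k}\neq{\bf 0}$; moreover, using $\sigma(M^{T})=\sigma(M)$ and $|\cdot|_\infty\ge d^{-1/2}\|\cdot\|_2$,
\[
|(A_n-A_m)^{T}{\bf k}|_\infty\ge d^{-1/2}\,\sigma(A_n-A_m)\,|{\bf k}|_\infty.
\]
Feeding this and \eqref{sing} into the decay bound $|\widehat{\mu}({\bf t})|\ll|{\bf t}|_\infty^{-s}$ (which, after enlarging the implied constant, holds for every non-zero integer ${\bf t}$) yields $|\widehat{\mu}((A_n-A_m)^{T}{\bf k})|\ll_{d,c,s}(|n-m|^{1/s}(\log_+|n-m|)^{1/s+\eps}|{\bf k}|_\infty)^{-s}$. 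Bounding $|\widehat{g_m}({\bf k})|\le\mathcal{L}^{d}(E_m)$ and $|\widehat{g_n}({\bf k})|\le\prod_i\min\{1,|k_i|^{-1}\}$, and summing over $n\neq m$ with $\ell:=|n-m|$ (each $\ell\ge1$ arising at most twice), we arrive at
\[
\mathcal{E}\ll_{d,c,s}\Big(\sum_{m\le N}\mathcal{L}^{d}(E_m)\Big)\sum_{\ell\ge1}\sum_{{\bf k}\neq{\bf 0}}\big(\ell^{1/s}(\log_+\ell)^{1/s+\eps}|{\bf k}|_\infty\big)^{-s}\prod_{i}\min\{1,|k_i|^{-1}\},
\]
and the inner double sum is finite by Lemma \ref{elementary}, so $\mathcal{E}\ll_{d,c,s,\eps}\sum_{m\le N}\mathcal{L}^{d}(E_m)$.

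Finally I would assemble the three contributions to $\int\sum_{1\le m,n\le N}\mathcal{L}^{d}(E_m\cap E_n)\,{\rm d}\mu$: the diagonal $\sum_{n\le N}\mathcal{L}^{d}(E_n)$; the off-diagonal main terms, whose sum is at most $\big(\sum_{n\le N}\mathcal{L}^{d}(E_n)\big)^2$; and $\mathcal{E}\ll_{d,c,s,\eps}\sum_{n\le N}\mathcal{L}^{d}(E_n)$. Since $\sum_n\prod_i r_i(n)=\infty$, for all $N$ sufficiently large $\sum_{n\le N}\mathcal{L}^{d}(E_n)=2^d\sum_{n\le N}\prod_i r_i(n)\ge1$, so the linear terms are absorbed into the square and the asserted inequality follows, with $C_1$ depending only on $d,c,s,\eps$ (and the implied constant of the Fourier-decay hypothesis). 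I expect the only genuinely delicate point to be the chain of estimates in the second paragraph: getting the error summand, via $\sigma(M^{T})=\sigma(M)$, norm equivalence, invertibility of $A_n-A_m$, and \eqref{sing}, into \emph{exactly} the shape of the summand in Lemma \ref{elementary} (times $\mathcal{L}^{d}(E_m)$), together with the bookkeeping that everywhere-convergence of the Fourier series of $g_m\ast g_n$ is precisely what makes the term-by-term integration against a singular $\mu$ legitimate.
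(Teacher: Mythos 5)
Your proof is correct and follows essentially the same route as the paper's: a Fourier-analytic second-moment estimate in which the off-diagonal terms are expanded as a Fourier series in ${\bm \alpha}$, the hypothesis \eqref{sing} together with $\sigma(M^{T})=\sigma(M)$ and norm equivalence turns the decay of $\widehat{\mu}$ into exactly the summand of Lemma \ref{elementary}, and the divergence of $\sum_{n}\prod_{i}r_i(n)$ absorbs the diagonal and error terms into the square for large $N$. The only difference is the first step: the paper bounds $\mathcal{L}^{d}(E_m\cap E_n)\le \mathcal{L}^{d}(E_n)\,{\bm 1}_{\mathcal{R}_m(2{\bf r})}((A_n-A_m){\bm \alpha})$ and expands the (discontinuous) box indicator, whereas you use the exact convolution identity $\mathcal{L}^{d}(E_m\cap E_n)=(g_m\ast g_n)((A_n-A_m){\bm \alpha})$, whose Fourier series is absolutely convergent, which makes the term-by-term integration against the possibly singular measure $\mu$ immediate---a slightly cleaner justification of the interchange, with the estimates thereafter identical.
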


\begin{proof}
By symmetry, it suffices to bound the contribution from $m<n.$ Observe that if $E_m\cap E_n\neq \emptyset$ then
$$(A_n-A_m){\bm \alpha} \bmod 1 \in\mathcal{R}_m(2{\bf r}).$$
It follows that 
$$\mathcal{L}^{d}(E_m\cap E_n)\le \mathcal{L}^{d}(E_n)\cdot {\bm 1}_{\mathcal{R}_m(2{\bf r})}((A_n-A_m){\bm \alpha}),$$
where ${\bm 1}_{\mathcal{R}_m(2{\bf r})}({\bf x})=\prod_{i=1}^{d}{\bm 1}_{([-2r_i,2r_i])}(x_i)$ is the characteristic function of the rectangular domain $\mathcal{R}_m( 2{\bf r}).$ Now
\begin{align}
\notag
&\int\sum_{1\le m<n\le N}\mathcal{L}^{d}(E_m\cap E_n){\rm d}\mu({\bm \alpha})
\\
\notag
&\le \sum_{1\le m<n\le N}\mathcal{L}^{d}(E_n)\int{\bm 1}_{\mathcal{R}_m(2{\bf r})}((A_n-A_m){\bm \alpha}) {\rm d}\mu({\bm \alpha})\\
\label{Qusi}
&=\sum_{1\le m<n\le N}\mathcal{L}^{d}(E_n)\sum_{{\bf k}\in \mathbb{Z}^{d}} \hat {\bm 1}_{\mathcal{R}_m(2{\bf r})}({-\bf k}) \hat{\mu}((A_n-A_m)^{T}{\bf k}).
\end{align}

The Fourier coefficients
\[
\hat {\bm 1}_{\mathcal{R}_m(2{\bf r})}({\bf k}) = \int{\bm 1}_{\mathcal{R}_m(2{\bf r})}({\bf x})e(-\langle{\bf k},{\bf x}\rangle){\rm d} {\bf x}
\]
satisfy
$$\hat {\bm 1}_{\mathcal{R}_m(2{\bf r})} (\bzero) = 2^{d}\mathcal{L}^{d}(E_m) $$
and
$$|\hat {\bm 1}_{\mathcal{R}_m(2{\bf r})}(\bk)|\le \prod_{i=1}^{d}\min\left\{4r_i(m),\frac{1}{|k_i|}\right\} \text{ for } {\bf k}\neq {\bf 0}.$$

Combining (\ref{sing}) with the Fourier decay assumption gives
\begin{align*}
&\sum_{{\bf k}\in \mathbb{Z}^{d}} \hat {\bm 1}_{\mathcal{R}_m(2{\bf r})}({\bf k}) \hat{\mu}((A_n-A_m)^{T}{\bf k}) \\
&\ll_c
\mathcal{L}^{d}(E_m) + \sum_{{\bf k}\in \mathbb{Z}^{d} \setminus \{{\bf 0}\}} \frac{\prod_{i=1}^{d} \min \left \{4r_i(m),\frac{1}{|k_i|}\right\}}{(|n-m|^{\frac{1}{s}}(\log_+ |n-m|)^{\frac{1}{s}+\eps}|{\bf k}|_{\infty})^{s}}.
\end{align*}
Inserting this into \eqref{Qusi} furnishes
\[
\int\sum_{1\le m<n\le N}\mathcal{L}^{d}(E_m\cap E_n){\rm d}\mu({\bm \alpha})
\ll_{c, d, s,\eps} \left(\sum_{n \le N}\mathcal{L}^{d}(E_n)\right)^{2}
\]
for $N$ sufficiently large, by Lemma \ref{elementary} and the divergence of the series $\sum_{n=1}^{\infty}\mathcal{L}^{d}(E_n).$  
\end{proof}

We see from Lemma \ref{qusi} that if $N$ is sufficiently large then $B_N^{c}(p)$ is a subset of 
\begin{align*}
&\widetilde{B}_{N}^{c}(C_1,p):= \\
&\left\{{\bm \alpha}\in[0,1)^{d}\colon \sum_{1\le m,n\le N}\mathcal{L}^{d}(E_m\cap E_n)\le pC_1\left(\sum_{1\le n\le N}\mathcal{L}^{d}(E_n)\right)^{2}\right\}.
\end{align*}
Thus, the set 
$$\widetilde{B}(C_1):=\bigcup_{p=1}^{\infty}\limsup_{N\to\infty}\widetilde{B}_{N}^{c}(C_1,p)$$ has full $\mu$-measure. Further, Lemma \ref{equi} provides a set $D \subseteq [0,1)^d$ with $\mu(D) = 1$ such that $(A_n \bm{\alpha})_{n\ge1}$ is equidistributed for all $\bm{\alpha} \in D$. We  define our target set by
$$G=\widetilde{B}(C_1)\cap D.$$
Then $\mu(G)=1$, which completes the first stage of the proof.

\bigskip

Let ${\bm \alpha}\in G,$ and assume that $$\sum_{n=1}^{\infty}\prod_{i=1}^{d}r_i(n)=\infty.$$
To complete the proof of Theorem \ref{t1}, it remains to show that
\begin{equation}
\label{toshow}
\mathcal{L}^{d}(\mathcal{T}_{{\bm \alpha}}(\mathcal{A}, {\bf r}))=1.
\end{equation}

Since ${\bm \alpha} \in G$, there exist an integer $p \ge 1$ and a sequence $(N_k)_{k=1}^\infty$ satisfying $N_k \to \infty$ such that
$$\sum_{1\le m,n\le N_k}\mathcal{L}^{d}(E_n\cap E_m)\le pC_1\left(\sum_{1\le n\le N_k}\mathcal{L}^{d}(E_n)\right)^{2}$$
holds for all $k\in\N.$  For an arbitrary open set $U\subseteq[0,1)^{d},$ by Lemma~\ref{CE}, we have
\begin{align*}
\mathcal{L}^{d}\Big(\limsup_{n\to\infty}(E_n\cap U)\Big)
&\ge \limsup_{N\to\infty}\frac{\big(\sum_{n=1}^{N}\mathcal{L}^{d}(E_n\cap U)\big)^{2}}{\sum_{m,n=1}^{N}\mathcal{L}^{d}(E_m\cap E_n\cap U)}
\\ &\ge \limsup_{k\to\infty}\frac{\big(\sum_{n=1}^{N_k}\mathcal{L}^{d}(E_n\cap U)\big)^{2}}{pC_1\big(\sum_{n=1}^{N_k}\mathcal{L}^{d}(E_n)\big)^{2}}.
\end{align*}

We claim that, for some constant $C = C_d$, if $k$ is sufficiently large then
$$\sum_{1\le n\le N_k}\mathcal{L}^{d}(E_n)\le \frac{C}{\mathcal{L}^{d}(U)}\sum_{1\le n\le N_k}\mathcal{L}^{d}(E_n\cap U).$$
Assuming the claim for now, we can continue our calculation to obtain
$$\mathcal{L}^{d}\Big(\limsup_{n\to\infty} E_n\cap U \Big)
\ge \frac{(\mathcal{L}^{d}(U))^{2}}{pC_1C^{2}}.$$
Applying Lemma \ref{Full meas} with
$h(x)=\frac{x^{2}}{pC_1C^{2}}$ delivers \eqref{toshow} and completes the proof of Theorem \ref{t1} (subject to the claim).

\bigskip

Finally, we turn to the proof of the claim. Since the open set $U$ can be partitioned into a countable union of axis-aligned cubes $D_1, D_2, \ldots$ of non-increasing volume,
there exists $L$ such that 
$$\sum_{\ell=1}^{L}
\mathcal{L}^{d}(D_\ell)< \mathcal{L}^{d}(U) < 2\sum_{\ell=1}^{L}
\mathcal{L}^{d}(D_\ell).$$
We decompose $\{ 1,2,\ldots,N_k \}$ into discrete subintervals $I_t$, where the minimum $n_t$ and the length of $I_t$ each have order of magnitude $2^t$.
For $k$ sufficiently large, 
\begin{align*}
&\sum_{n \le N_k} \cL^{d}(E_n\cap U) \\
&\ge \sum_{\ell \le L} \sum_t \sum_{n \in I_t} \cL^{d}
(E_n\cap D_\ell) \\
&\gg_d \sum_{\ell \le L}  \sum_t \cL^{d}(E_{n_t}) 
\cdot \sharp
\{ n \in I_t: A_n{\bm \alpha} \in D_\ell \!\!\!\!\pmod1\}.
\end{align*}
As $(A_n \balp)_{n=1}^\infty$ is equidistributed modulo $\bZ^d$, and $\br$ is doubling, we now have
\begin{align*}
\sum_{n \le N_k} \cL^d(E_n \cap U)
&\gg \sum_{\ell \le L} \sum_t
\cL^d(E_{n_t})
2^{t} \cL^d(D_\ell)
\\
&\gg \sum_{\ell \le L} \sum_t \sum_{n \in I_t} \cL^d(E_n) \cL^d(D_\ell)\\
&\gg \cL^d(U) \sum_{n \le N_k} \cL^d(E_n).
\end{align*}
This confirms the claim and completes the proof of Theorem \ref{t1}.

\section{Proof of Theorem \ref{t2}}

\begin{lemma}
\label{independent}
The events 
$$F_n=\{({\bm \alpha}, {\bm \beta})\in[0,1)^{d}\times [0,1)^{d}\colon A_n{\bm \alpha} \in {\bm \beta}+\mathcal{R}_n({\bf r})\!\!\!\!\pmod1\}
\quad (n \in \bN)
$$
are pairwise independent with respect to $\cL^{2d} = \mathcal{L}^{d} \otimes\mathcal{L}^{d}.$
\end{lemma}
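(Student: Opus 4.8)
The plan is to compute $\mathcal{L}^{2d}(F_m\cap F_n)$ for $m\ne n$ directly, via Tonelli's theorem together with a change of variables on the torus $\bR^d/\bZ^d$, and to check that it factors as $\mathcal{L}^{2d}(F_m)\,\mathcal{L}^{2d}(F_n)$. Throughout I would regard each $\mathbf{1}_{\mathcal{R}_n(\mathbf{r})}$ as a $\bZ^d$-periodic function on $\bR^d$, so that the defining condition of $F_n$ reads $\mathbf{1}_{\mathcal{R}_n(\mathbf{r})}(A_n\bm\alpha-\bm\beta)=1$.

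First I would record the measure of a single event. For fixed $\bm\alpha$, the map $\bm\beta\mapsto A_n\bm\alpha-\bm\beta\pmod{\bZ^d}$ is a measure-preserving bijection of $[0,1)^d$, so integrating $\mathbf{1}_{F_n}$ over $\bm\beta$ first gives $\mathcal{L}^{d}(\mathcal{R}_n(\mathbf{r}))$ for every $\bm\alpha$, whence $\mathcal{L}^{2d}(F_n)=\mathcal{L}^{d}(\mathcal{R}_n(\mathbf{r}))=2^d\prod_{i=1}^d r_i(n)$. Next, for $m\ne n$, write
$$\mathcal{L}^{2d}(F_m\cap F_n)=\int_{[0,1)^d}\!\!\int_{[0,1)^d}\mathbf{1}_{\mathcal{R}_m(\mathbf{r})}(A_m\bm\alpha-\bm\beta)\,\mathbf{1}_{\mathcal{R}_n(\mathbf{r})}(A_n\bm\alpha-\bm\beta)\,{\rm d}\bm\beta\,{\rm d}\bm\alpha .$$
For fixed $\bm\alpha$, substitute $\bm\gamma=A_m\bm\alpha-\bm\beta\pmod{\bZ^d}$ (again a measure-preserving bijection of the torus); since $A_n\bm\alpha-\bm\beta=(A_n-A_m)\bm\alpha+\bm\gamma\pmod{\bZ^d}$, the integral becomes
$$\int_{[0,1)^d}\!\!\int_{[0,1)^d}\mathbf{1}_{\mathcal{R}_m(\mathbf{r})}(\bm\gamma)\,\mathbf{1}_{\mathcal{R}_n(\mathbf{r})}\big((A_n-A_m)\bm\alpha+\bm\gamma\big)\,{\rm d}\bm\gamma\,{\rm d}\bm\alpha .$$
By Tonelli's theorem I would then swap the order of integration and perform the $\bm\alpha$-integral first.

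The key step is the evaluation of the inner integral,
$$\int_{[0,1)^d}\mathbf{1}_{\mathcal{R}_n(\mathbf{r})}\big((A_n-A_m)\bm\alpha+\bm\gamma\big)\,{\rm d}\bm\alpha=\mathcal{L}^{d}(\mathcal{R}_n(\mathbf{r})),$$
valid for every $\bm\gamma$, and this is exactly where the hypothesis $\det(A_m-A_n)\ne 0$ is used. Indeed, $A_n-A_m$ is an integer matrix with nonzero determinant, so $\bm\alpha\mapsto(A_n-A_m)\bm\alpha\pmod{\bZ^d}$ is a surjective continuous endomorphism of the compact group $\bR^d/\bZ^d$ (with kernel of cardinality $|\det(A_n-A_m)|$); such an endomorphism pushes the Haar probability measure $\mathcal{L}^d$ forward to a translation-invariant probability measure, hence to $\mathcal{L}^d$ itself. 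Combined with the translation-invariance of $\mathcal{L}^d$ on the torus, this shows that integrating the fixed translate $\mathbf{x}\mapsto\mathbf{1}_{\mathcal{R}_n(\mathbf{r})}(\mathbf{x}+\bm\gamma)$ pulled back along this endomorphism gives $\int_{[0,1)^d}\mathbf{1}_{\mathcal{R}_n(\mathbf{r})}=\mathcal{L}^d(\mathcal{R}_n(\mathbf{r}))$, independently of $\bm\gamma$.

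Substituting back yields $\mathcal{L}^{2d}(F_m\cap F_n)=\mathcal{L}^{d}(\mathcal{R}_n(\mathbf{r}))\int_{[0,1)^d}\mathbf{1}_{\mathcal{R}_m(\mathbf{r})}(\bm\gamma)\,{\rm d}\bm\gamma=\mathcal{L}^{d}(\mathcal{R}_m(\mathbf{r}))\,\mathcal{L}^{d}(\mathcal{R}_n(\mathbf{r}))=\mathcal{L}^{2d}(F_m)\,\mathcal{L}^{2d}(F_n)$, which is pairwise independence. The only ingredient beyond Tonelli's theorem and a translation change of variables on the torus is the fact that a nonsingular integer matrix induces a Haar-measure-preserving endomorphism of $\bR^d/\bZ^d$, which forces the inner integral to be constant in $\bm\gamma$; this is the crux of the argument, and the sole point at which the determinant hypothesis is needed.
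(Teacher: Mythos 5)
Your argument is correct, and it takes a genuinely different route from the paper. The paper proves the lemma by expanding both indicator functions $\bm 1_{\mathcal{R}_m(\mathbf{r})}$ and $\bm 1_{\mathcal{R}_n(\mathbf{r})}$ into Fourier series and using orthogonality: the $\bm\beta$-integral forces the two frequencies to be opposite, and the remaining $\bm\alpha$-integral $\int e(\langle (A_m-A_n)^{T}\mathbf{k},\bm\alpha\rangle)\,\mathrm{d}\bm\alpha$ vanishes for $\mathbf{k}\neq\bm 0$ precisely because $\det(A_m-A_n)\neq 0$ makes $(A_m-A_n)^{T}\mathbf{k}$ a nonzero integer vector; only $\mathbf{k}=\bm 0$ survives, giving the product of measures. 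You instead avoid Fourier analysis altogether: after the measure-preserving substitution $\bm\gamma=A_m\bm\alpha-\bm\beta$ on the torus and an application of Tonelli, you reduce everything to the fact that a nonsingular integer matrix induces a surjective, Haar-measure-preserving endomorphism of $\mathbb{R}^d/\mathbb{Z}^d$, so the inner $\bm\alpha$-integral is constant in $\bm\gamma$. The determinant hypothesis enters at the structurally analogous point in both proofs (killing nonzero frequencies versus guaranteeing surjectivity of the endomorphism), and the two computations are essentially dual to one another. Your version is slightly more elementary and sidesteps the (routine but unmentioned) justification of interchanging the sum of a Fourier series of an indicator function with the integrals, while the paper's Fourier computation has the advantage of matching the machinery used elsewhere in the paper (e.g.\ in the proof of Theorem \ref{t1}), where the same expansion is paired with Fourier decay of a general measure $\mu$ rather than orthogonality.
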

\begin{proof}
Recall that ${\bm 1}_{\mathcal{R}_n({\bf r})}({\bf x})=\prod_{i=1}^{d}{\bm 1}_{([-r_i,r_i])}(x_i)$ is the characteristic function defined on the rectangular domain $\mathcal{R}_n({\bf r})$
 for $n\in\N.$ For $m, n\in\N$ with $m\neq n,$ $$\cL^{2d}(F_m\cap F_n)
=\iint{\bm 1}_{\mathcal{R}_m({\bf r})}(A_m{\bm \alpha}-{\bm \beta}){\bm 1}_{\mathcal{R}_n({\bf r})}(A_n{\bm \alpha}-{\bm \beta}){\rm d}{{\bm \alpha}}{\rm d}{{\bm \beta}}.$$
Expanding both characteristic functions into Fourier series and using orthogonality gives
\begin{align*}
&\cL^{2d}(F_m\cap F_n) \\
&= \sum_{{\bm k},{\bm t}\in \mathbb{Z}^{d}} \hat\bone_{\cR_m(\br)}(\bk)
\hat\bone_{\cR_n(\br)}(\bt) \\
&\qquad \int e(\langle A_m^{T}{\bf k} + A_n^{T}{\bf t},{\bm \alpha}\rangle){\rm d}{\bm \alpha} \int e(-\langle {\bf k}+{\bf t},{\bm \beta}\rangle){\rm d}{\bm \beta} \\
&= \sum_{{\bf k}\in \mathbb{Z}^{d}} \hat\bone_{\cR_m(\br)}(\bk)
\hat\bone_{\cR_n(\br)}(-\bk) \int e(\langle (A_m - A_n)^{T}{\bf k},{\bm \alpha}\rangle){\rm d}{\bm \alpha}.
\end{align*}
As $\det(A_m - A_n) \ne 0$, the only contribution comes from ${\bf k}={\bm 0},$ so 
$$\mathcal{L}^{2d}(F_m\cap F_n) = \hat\bone_{\cR_m(\br)}(\bzero)
\hat\bone_{\cR_n(\br)}(\bzero) = \mathcal{L}^{2d}(F_m)\mathcal{L}^{2d}(F_n).$$
\end{proof}

By Lemmas \ref{BC lemma} and \ref{independent}, the set $\limsup F_n$ has full Lebesgue measure in $[0,1)^d \times [0,1)^d$. Fubini’s theorem now gives 
$$\mathcal{L}^{d}\big(\mathcal{T}_{{\bm \alpha}}(\mathcal{A}, {\bf r})\big)=1$$ 
for almost every vertical fibre. This completes the proof of Theorem \ref{t2}.

\section{Proof of Theorem \ref{t3}}

\subsection{Convergence part}

In this case,
\begin{equation}
\label{convergence}
\sum_{n=1}^{\infty}s_{n}({\bf r},f)<\infty.
\end{equation}
Let $i = i(n) \in \{ 1,2,\ldots,d \}$ be such that
\[
s_{n}({\bf r},f) = f(r_i(n))\prod_{j\colon r_{j}(n)
 \ge r_{i}(n)}\frac{r_j(n)}{r_i(n)}.
\]
We cover
$$E_n := \{{\bm \beta}\in[0,1)^{d}\colon A_n{\bm \alpha} \in {\bm \beta}+\mathcal{R}_n( {\bf r})\!\!\!\!\pmod1\}$$
by balls of radius $r_i$. The $j^{\mathrm{th}}$ side of the hyper-rectangle $\mathcal{R}(A_n{\bm \alpha}, {\bf r}(n))$ centred at $A_n{\bm \alpha}~ \!\!\!\!\pmod1$ has length $2r_{j}(n)$ which can be covered by 
\begin{equation*}
\begin{cases}
   1,  & \text{if  } r_{j}(n)< r_i(n) \\
    \frac{r_{j}(n)}{r_{i}(n)},  & \text{if  } r_{j}(n)\ge r_i(n)
\end{cases}
\end{equation*}
many intervals of radius $r_i(n).$ Consequently, if $\eta \ge r_i$ then
\[
\cH^f_\eta(E_n)
\le s_{n}({\bf r},f).
\]
By (\ref{convergence}) and the assumption that $|\br(n)|_\infty \to 0$ as $n \to \infty$, we now have 
$$\mathcal{H}^{f}\big(\mathcal{T}_{{\bm \alpha}}(\mathcal{A}, {\bf r})\big)\le \lim_{N\to\infty}\sum_{n=N}^{\infty}s_{n}({\bf r},f)=0.$$

\subsection{Divergence part}

We establish the divergence part in this subsection by applying Lemmas \ref{p:MDP} and \ref{t:weaken}.
To apply Lemma \ref{t:weaken}, we construct a lim sup set generated by a sequence of balls, which still has full Lebesgue measure for $\mathcal{L}^{d}$-almost every ${\bm \alpha}\in[0,1)^{d}$. We begin with this construction.

We know that
\begin{equation}
\label{divergence}
\sum_{n=1}^{\infty} s_n(\mathbf{r},f) = \infty.
\end{equation}
Since $f \preceq d$, for large $n$ we have $f(r_i(n)) \gg r_i(n)^d$, hence
\begin{equation}
\label{lower}
s_n(\mathbf{r},f) \gg \min_{1 \le i \le d} \left\{ r_i(n)^d \prod_{j\colon r_j(n)\ge r_i(n)} \frac{r_j(n)}{r_i(n)} \right\} = \prod_{i=1}^d r_i(n).
\end{equation}
Since the lim sup set $\mathcal{T}_{{\bm \alpha}}(\mathcal{A}, \bf r)$ is unaffected by the alteration of finitely many $n\in\N,$ we will henceforth assume that the inequality (\ref{lower}) holds for all $n\in\N.$

For each $n\in\N,$  we rearrange  $r_{1}(n),\ldots, r_{d}(n)$ as
$$r_{i_{1}}(n)\le r_{i_{2}}(n)\le \cdots\le r_{i_{d}}(n).$$
Note that the indices $i_{1},\ldots i_{d}$ depend on $n,$ although we suppress this dependence in the notation. One may readily verify that the inequalities
$$\prod_{j=1}^{d}r_{i_{j}}\le\cdots\le r_{i_{k}}(n)^{k} \prod_{j=k+1}^{d} r_{i_{j}} \le \cdots \le r_{i_{d}}(n)^{d}$$
hold for $k=1,2,\ldots,d$.
By (\ref{lower}), there exists $k = k(n) \in 
\{ 0,1,\ldots,d \} $ such that 
$$r_{i_{k}}(n)^{k} \prod_{j=k+1}^{d} r_{i_{j}} \ll s_{n}({\bf r},f) \ll r_{i_{{k+1}}}(n)^{k+1} \prod_{j=k+2}^{d}r_{i_{j}},$$
where $r_{i_{d+1}} = \infty$. Let
\begin{equation}
\label{b_n}
b_n = \left(\frac{s_{n}({\bf r},f)}{\prod_{j=k+1}^{d}r_{i_{j}}(n)}\right)^{1/k},
\end{equation}
and note that
$$r_{i_{k}} \ll b_n \ll r_{i_{k+1}}.$$
Define $\Psi =(\psi_{1},\ldots,\psi_d)$ by 
\begin{equation*}
\psi_{i_{j}}(n)=\begin{cases}
   r_{i_{j}}(n),   & \text{if  $j>k=k(n)$} \\
    b_n,  & \text{if  $j\le k=k(n)$}.
\end{cases}
\end{equation*}
Then
$$\psi_{i_{1}}(n)=\cdots=\psi_{i_{k}}(n)=b_n\ll\psi_{i_{k+1}}(n)\le\cdots\le \psi_{i_{d}}(n).$$

\begin{lemma}
\label{FM}
Let $\Psi$ be the function defined as above. Then, for $\mathcal{L}^{d}$-almost every ${\bm \alpha}\in[0,1)^{d}$,  
$$\mathcal{L}^{d}\big(\mathcal{T}_{{\bm \alpha}}(\mathcal{A}, \Psi)\big)=1.$$
\end{lemma}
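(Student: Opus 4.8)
The plan is to show that $\mathcal{T}_{{\bm \alpha}}(\mathcal{A}, \Psi) = \limsup_n E_n^{\Psi}$ has full Lebesgue measure for $\mathcal{L}^d$-almost every ${\bm \alpha}$ by running the same independence-plus-Fubini argument used in the proof of Theorem \ref{t2}. Concretely, I would first pass to the product space $[0,1)^d \times [0,1)^d$ and set
\[
F_n^{\Psi} = \{({\bm \alpha},{\bm \beta}) : A_n{\bm \alpha} \in {\bm \beta} + \mathcal{R}_n(\Psi) \!\!\!\pmod 1\}.
\]
Since $\det(A_m - A_n) \ne 0$ for $m \ne n$, the computation in Lemma \ref{independent} applies verbatim with $\br$ replaced by $\Psi$ — the Fourier expansion of the two characteristic functions, orthogonality in ${\bm \beta}$ forcing $\bt = -\bk$, and then the nonvanishing determinant forcing $\bk = \bzero$ — giving $\mathcal{L}^{2d}(F_m^{\Psi} \cap F_n^{\Psi}) = \mathcal{L}^{2d}(F_m^{\Psi})\mathcal{L}^{2d}(F_n^{\Psi})$. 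Thus the $F_n^{\Psi}$ are pairwise independent.

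Next I would verify the divergence of $\sum_n \mathcal{L}^{2d}(F_n^{\Psi}) = 2^d \sum_n \prod_{i=1}^d \psi_i(n)$. By construction of $\Psi$, exactly $k = k(n)$ of the coordinates equal $b_n$ and the rest equal the corresponding $r_{i_j}(n)$, so
\[
\prod_{i=1}^d \psi_i(n) = b_n^{k}\prod_{j=k+1}^d r_{i_j}(n) = s_n(\mathbf{r},f)
\]
by the defining formula \eqref{b_n} for $b_n$. Hence $\sum_n \prod_i \psi_i(n) = \sum_n s_n(\mathbf{r},f) = \infty$ by \eqref{divergence}. The second Borel--Cantelli lemma (Lemma \ref{BC lemma}) then gives $\mathcal{L}^{2d}(\limsup_n F_n^{\Psi}) = 1$, and Fubini's theorem yields $\mathcal{L}^d(\mathcal{T}_{{\bm \alpha}}(\mathcal{A},\Psi)) = 1$ for $\mathcal{L}^d$-almost every ${\bm \alpha}$, as the vertical fibre of $\limsup_n F_n^{\Psi}$ over ${\bm \alpha}$ is exactly $\mathcal{T}_{{\bm \alpha}}(\mathcal{A},\Psi)$.

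There is no real obstacle here: the only point needing a word of care is that $\Psi$ genuinely maps into $(0,\tfrac12)^d$, which follows since $\psi_{i_j}(n) \le r_{i_j}(n) < \tfrac12$ for $j > k$, while for $j \le k$ we have $b_n \ll r_{i_{k+1}}(n) < \tfrac12$ after absorbing the implied constant (and, if necessary, discarding finitely many $n$, which does not affect the $\limsup$). Everything else is a direct transcription of the proof of Theorem \ref{t2} with $\br \rightsquigarrow \Psi$, the one substantive input being the identity $\prod_i \psi_i(n) = s_n(\mathbf{r},f)$ recorded above.
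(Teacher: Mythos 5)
Your proof is correct and takes essentially the same route as the paper: the paper simply invokes the divergence part of Theorem \ref{t2} (whose proof is precisely the pairwise-independence, Borel--Cantelli and Fubini argument you reproduce with $\br$ replaced by $\Psi$), combined with the identity $\prod_{i=1}^{d}\psi_i(n)=s_n(\mathbf{r},f)$ and \eqref{divergence}. Your additional check that $\Psi$ maps into $(0,\tfrac12)^d$ for all large $n$ is a sensible detail that the paper leaves implicit.
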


\begin{proof}
By Theorem \ref{t2}, it suffices to show that 
$$\sum_{n=1}^{\infty}\prod_{j=1}^{d}\psi_j(n)=\infty.$$
Note that  
$$\prod_{j=1}^{d}\psi_j(n)=\prod_{j=1}^{k(n)}b_n\cdot\prod_{j=k(n)+1}^{d} r_{i_{j}}(n) = s_{n}({\bf r},f).$$
The conclusion now follows from (\ref{divergence}).
\end{proof}

Recall that 
\begin{align*}
\mathcal{T}_{{\bm \alpha}}(\mathcal{A},\Psi) = 
\bigcap_{N=1}^\infty
\bigcup_{n=N}^{\infty} G_n,
\end{align*}
where
\[
G_n =  
\{ {\bm \beta} \in[0,1)^{d}: A_n{\bm \alpha} \in {\bm \beta}+\mathcal{R}_n(\Psi)\!\!\!\!\pmod1  \}.
\]
For each $n$, the set $G_{n}$ has a finite cover $\mathcal{C}_n$ 
by balls of radius $C b_n$ centred at points in $G_n$, where $C = C_d$ is a large, positive constant. Consequently,
$$\mathcal{T}_{{\bm \alpha}}(\mathcal{A},\Psi) \subseteq \bigcap_{N=1}^{\infty} \bigcup_{n=N}^{\infty} \bigcup_{B\in\mathcal{C}_n}B.$$
By Lemma \ref{FM}, the right-hand side has full Lebesgue measure for $\mathcal{L}^{d}$-almost every ${\bm \alpha}\in[0,1)^{d}$. Moreover,  
$$\mathcal{T}_{{\bm \alpha}}(\mathcal{A},{\bf r}) \supseteq \bigcap_{N=1}^{\infty}\bigcup_{n=N}^{\infty}\bigcup_{B\in\mathcal{C}_n}B\cap E_n.$$
In light of Lemma \ref{t:weaken}, to establish that $$\mathcal{H}^{f}(\mathcal{T}_{\alpha}(\mathcal{A},{\bf r}))=\mathcal{H}^{f}([0,1)^{d})$$
holds for $\mathcal{L}^{d}$-almost every ${\bm \alpha}\in[0,1)^{d}$, it suffices to show that for every such covering $\mathcal{C}_n$ and every ball $B\in\mathcal{C}_n,$
\begin{equation}
\label{key}
\mathcal{H}_{\infty}^{f}(B\cap E_n) \gg \mathcal{L}^{d}(B)\asymp b_n^{d}.
\end{equation}

Since the radius of $B$ is $Cb_n$, the set $B\cap E_n$ contains an axis-aligned box $H_n = H_n(B)$
with side lengths 
$$r_{i_1}(n),\ldots, r_{i_{k}}(n),\underbrace{b_{n},\ldots,b_n}_{d-k \text{ terms}}.$$
The following estimate for the Hausdorff $f$-content of $H_n$ implies (\ref{key}), and will therefore complete the proof of Theorem \ref{t3}.

\begin{lemma}
Let $H_n$ be as above. Then 
$$\mathcal{H}_{\infty}^{f}(H_n) \gg_f b_n^{d}.$$
\end{lemma}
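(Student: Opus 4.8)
The plan is to construct an explicit probability measure on $H_n$ and apply the mass distribution principle (Lemma~\ref{p:MDP}). Recall that $H_n$ is an axis-aligned box with $k=k(n)$ long sides of lengths $r_{i_1}(n)\le\cdots\le r_{i_k}(n)$ (each comparable to or exceeding $b_n$—wait, actually each $r_{i_j}(n)$ with $j\le k$ satisfies $r_{i_j}(n)\le b_n$ up to constants, so I should be careful: by construction $r_{i_k}\ll b_n\ll r_{i_{k+1}}$, so the first $k$ sides have length $\le r_{i_k}\ll b_n$) and $d-k$ sides of length $b_n$. So $H_n$ is a box whose shortest sides are the $r_{i_j}(n)$ for $j\le k$ and whose longest sides all equal $b_n$; its $\mathcal L^d$-volume is $\prod_{j\le k}r_{i_j}(n)\cdot b_n^{d-k}$, which one checks equals $s_n({\bf r},f)$ by the defining relation \eqref{b_n}.

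First I would put the normalised Lebesgue measure $\lambda$ on $H_n$, i.e. $\lambda = \mathcal L^d|_{H_n}/\mathcal L^d(H_n)$, and estimate $\lambda(B')$ for an arbitrary ball $B'$ of radius $\rho$. The key point is a case analysis on the size of $\rho$ relative to the side lengths of $H_n$. If $\rho$ is at least the longest side $b_n$, then $\lambda(B')\le 1$, and since $f(b_n)\gg b_n^{s+1}/\,(\text{something})$—more precisely one uses $f\preceq s+1$ to compare $f(\rho)\ge f(b_n)$ with the volume. If $\rho$ is between two consecutive side lengths, say $\psi_{i_\ell}\le \rho < \psi_{i_{\ell+1}}$ in the sorted order of the $d$ side-lengths of $H_n$, then $B'\cap H_n$ is contained in a box with $\ell$ sides of length $\sim\rho$ and the remaining $d-\ell$ sides equal to the corresponding (larger) side-lengths of $H_n$, so
\[
\lambda(B') \ll \frac{\rho^{\ell}\prod_{j>\ell}\psi_{i_j}(n)}{\prod_{j=1}^d \psi_{i_j}(n)} = \frac{\rho^\ell}{\prod_{j\le \ell}\psi_{i_j}(n)}.
\]
Since the sorted side-lengths satisfy $\psi_{i_1}=\cdots=\psi_{i_k}=b_n$ and then increase, one has $\prod_{j\le\ell}\psi_{i_j}(n)\ge b_n^{\min(\ell,k)}\cdot(\text{product of the small }r\text{'s})$; combined with $\rho\ge \psi_{i_\ell}\ge$ the relevant lower bound, the exponent of $\rho$ that survives is at least $s$ but the ratio is also bounded by $\rho^{s}\cdot(\text{constant})$ after using that at most $s+1\le d$ of the side-lengths can be ``spent'' — here is where the hypothesis $s\preceq f\preceq s+1$, i.e. the integer $s\in[0,d-1]$, enters decisively. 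The upshot I want is $\lambda(B')\ll f(\rho)/s_n({\bf r},f)$ uniformly in $B'$, which by Lemma~\ref{p:MDP} gives $\mathcal H^f_\infty(H_n)\gg s_n({\bf r},f)\gg b_n^d$ (the last step by \eqref{lower}, or rather by the relation $s_n({\bf r},f)\gg b_n^d$ which follows from $s_n=\prod_{j\le k}r_{i_j}\cdot b_n^{d-k}$ and $r_{i_j}\ll b_n$).

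I expect the main obstacle to be the bookkeeping in the intermediate regime: showing that for every radius $\rho$ the ratio $\lambda(B')$ is dominated by $f(\rho)/s_n({\bf r},f)$, using only $f\preceq s+1$ on the upper side and $s\preceq f$ on the lower side, and verifying that the ``correct'' number of side-lengths to compare against is exactly governed by the integer $s$. Concretely, when $\rho$ lies between the $k$-th and $(k+1)$-st sorted side-lengths (the transition from the $b_n$-block to the genuinely long sides), one must check that $\rho^k\cdot\prod_{j>k}\psi_{i_j}/\prod\psi_{i_j}=\rho^k/b_n^k$ is $\ll f(\rho)/f(b_n)$: this needs $f(\rho)/\rho^k\gg f(b_n)/b_n^k$ for $\rho\ge b_n$, which follows from $f\preceq s+1$ together with $k\le s+1$ — but $k$ need not be $\le s+1$ a priori, so one actually has to split further and use $f\preceq d$ crudely for the ``excess'' sides and $f\succeq s$ to not lose too much. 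Carefully tracking which of $f\preceq s+1$ and $s\preceq f$ is invoked for each of the $d+1$ sub-regimes of $\rho$ is the delicate part; once the per-regime bound $\lambda(B')\ll f(\rho)/s_n({\bf r},f)$ is in hand, the conclusion is immediate from Lemma~\ref{p:MDP}.
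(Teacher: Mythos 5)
Your overall strategy (normalised Lebesgue measure on $H_n$ plus the mass distribution principle, with a case analysis on $\rho$ against the side lengths) is exactly the paper's, but the execution contains a genuine error: you have conflated $H_n$ with the $\Psi$-box. The box $H_n$ has side lengths $r_{i_1}(n),\ldots,r_{i_k}(n),b_n,\ldots,b_n$ (the \emph{small} $r$'s survive, the long directions are truncated at $b_n$), so $\mathcal{L}^d(H_n)=\prod_{j\le k}r_{i_j}(n)\cdot b_n^{d-k}\le b_n^d$, whereas $s_n({\bf r},f)=b_n^{k}\prod_{j>k}r_{i_j}(n)\ge b_n^d$ by \eqref{b_n}; these two quantities are equal only in degenerate cases, so your identity ``$\mathcal{L}^d(H_n)=s_n$'' is false, and your inference ``$s_n\gg b_n^d$ follows from $s_n=\prod_{j\le k}r_{i_j}\cdot b_n^{d-k}$ and $r_{i_j}\ll b_n$'' runs backwards (that formula would give $s_n\ll b_n^d$). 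More seriously, your intermediate target $\lambda(B')\ll f(\rho)/s_n$, i.e.\ $\mathcal{H}^f_\infty(H_n)\gg s_n$, is simply not true: since $\mathrm{diam}(H_n)\asymp b_n$, one always has $\mathcal{H}^f_\infty(H_n)\ll f(b_n)$, and $f(b_n)$ can be far smaller than $s_n$. Concretely, take $d=2$, $f(\rho)=\rho^2$ (so $s=1$), $r_{i_1}(n)=\epsilon^{10}$, $r_{i_2}(n)=\epsilon$: then $s_n=\epsilon^{11}$, $k=1$, $b_n=\epsilon^{10}$, $H_n$ is (comparable to) a square of side $\epsilon^{10}$ with $\mathcal{H}^f_\infty(H_n)\asymp\epsilon^{20}=b_n^d\lll s_n$. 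So no choice of bookkeeping in the intermediate regimes can rescue the bound you aim for; the proof as outlined cannot close.

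The correct target, and the one the paper proves, is the weaker $\mu(B(\bx,\rho))\ll f(2\rho)/b_n^d$ for the same measure $\mu=\mathcal{L}^d|_{H_n}/\mathcal{L}^d(H_n)$, which by Lemma~\ref{p:MDP} gives exactly $\mathcal{H}^f_\infty(H_n)\gg b_n^d$ as claimed. The case analysis compares $2\rho$ with $r_{i_1}(n)\le\cdots\le r_{i_k}(n)\le b_n$ (not with the $\psi$'s, which describe the other box): for $2\rho$ below $b_n$ one bounds $\mu(B(\bx,\rho))$ by $f(2\rho)$ divided by quantities of the form $f(r_{i_j})r_{i_j}^{j-d}r_{i_{j+1}}\cdots r_{i_k}b_n^{d-k}$, and the definition of $s_n$ together with \eqref{b_n} shows each such quantity is at least $b_n^d$ (or $f(b_n)\gg_f b_n^d$ for the boundary index); for $2\rho>b_n$ one uses $\mu\le 1\le f(2\rho)/f(b_n)\ll f(2\rho)/b_n^d$. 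Note also that the only use of the hypothesis $s\preceq f\preceq s+1$ here is the crude consequence $f\preceq d$ (since $s+1\le d$); the finer splitting by the integer $s$ that you anticipate is not needed at this point.
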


\begin{proof}
We will apply Lemma~\ref{p:MDP} to bound the Hausdorff $f$-content from below. Define a probability measure $\mu$ on $H_n$ by
$$\mu := \frac{\mathcal{L}^{d}\big|_{H_{n}}}{\mathcal{L}^{d}(H_n)}=\frac{\mathcal{L}^{d}\big|_{H_{n}}}{r_{i_{1}}(n)\cdots r_{i_{k}}(n) b_{n}^{d-k}}.$$
We estimate $\mu\big(B({\bf x},\rho)\big)$ for an arbitrary ball $B({\bf x},\rho)$ with ${\bf x}\in H_n$ and $\rho>0.$ For this proof only, we redefine $r_{i_{k+1}} = b_n$. We divide the analysis into three cases. 

\bigskip

\noindent \textbf{Case 1.} Suppose $0<2\rho\le r_{i_{1}}(n).$ Then
$$\mu(B({\bf x},\rho))\le \frac{(2\rho)^{d}}{r_{i_{1}}(n)\cdots r_{i_{k}}(n) b_{n}^{d-k}}.$$
Since $s \preceq f\preceq s+1$ for some $s\in\{0,1,\ldots,d-1\}$, we get
$$l\preceq f \text{ for all } l\le s, \text{ and } f\preceq l \text{ for all } l\ge s+1.$$ 
In particular, since $s+1\le d$ we have $f\preceq d,$ so
$$\frac{f(r_{i_1}(n))}{r_{i_{1}}(n)^{d}} \le \frac{f(2\rho)}{(2\rho)^{d}} \ \Longrightarrow \ (2\rho)^{d}\le 
\frac{r_{i_{1}}(n)^{d}f(2\rho)}{f(r_{i_{1}}(n))}.$$ 
Hence,
$$\mu(B({\bf x},\rho))\le \frac{f(2\rho)}{f(r_{i_{1}}(n))r_{i_{1}}(n)^{1-d}r_{i_{2}}(n)\cdots r_{i_{k}}(n) b_n^{d-k}}.$$

\bigskip

\noindent \textbf{Case 2.} Suppose $r_{i_{j}}(n)<2\rho\le r_{i_{j+1}}(n)$ for some $1\le j\le k \le d$.
Then either
$$d-j\preceq f \text{ or } f\preceq d-j.$$
We compute that
$$\mu(B({\bf x},\rho))\le\frac{r_{i_1}(n)\cdots r_{i_{j}}(n)(2\rho)^{d-j}}{r_{i_{1}}(n)\cdots r_{i_{k}}(n) b_{n}^{d-k}}=\frac{(2\rho)^{d-j}}{r_{i_{j+1}}(n)\cdots r_{i_{k}}(n) b_{n}^{d-k}}.$$
If $d-j\preceq f$, then from $r_{i_{j}}(n) < 2\rho$ we obtain 
$$\frac{(2\rho)^{d-j}}{f(2\rho)}\le \frac{r_{i_{j}}(n)^{d-j}}{f(r_{i_{j}}(n))} \ \Longrightarrow \ (2\rho)^{d-j} \le \frac{r_{i_{j}}(n)^{d-j}f(2\rho)}{f(r_{i_{j}}(n))},$$
whence
$$\mu(B({\bf x},\rho)) \le \frac{f(2\rho)}{f(r_{i_{j}}(n))r_{i_{j}}(n)^{j-d}
r_{i_{j+1}}(n) \cdots r_{i_{k}}(n) b_n^{d-k}}.$$
If instead $f\preceq d-j,$ then $2\rho\le r_{i_{j+1}}(n)$ gives
$$\frac{f(r_{i_{j+1}}(n))}{r_{i_{j+1}}(n)^{d-j}} \le \frac{f(2\rho)}{(2\rho)^{d-j}} \ \Longrightarrow \ (2\rho)^{d-j} \le \frac{r_{i_{j+1}}(n)^{d-j}f(2\rho)} {f(r_{i_{j+1}}(n))},$$
and thus 
$$\mu(B({\bf x},\rho))\le\frac{f(2\rho)}{f(r_{i_{j+1}}(n))r_{i_{j+1}}(n)^{j+1-d} r_{i_{j+2}}(n) \cdots r_{i_{k}}(n) b_n^{d-k}}.$$

\bigskip

In either of these first two cases,
$$\mu(B({\bf x},\rho))\le\frac{f(2\rho)}{\min\limits_{1\le j\le k+1}
\left\{f(r_{i_{j}}(n))r_{i_{j}}(n)^{j-d} r_{i_{j+1}}(n) \cdots r_{i_{k}}(n) b_n^{d-k}\right\}}.$$
For $1 \le j \le k$,
\begin{align*} 
&f(r_{i_j}(n))
r_{i_j}(n)^{j-d} r_{i_{j+1}}(n) \cdots r_{i_{k}}(n)b_n^{d-k}
\\
&= f(r_{i_j}(n)) \prod_{s=j+1}^{d} \frac{r_{i_{s}}(n)}{r_{i_j}(n)} \cdot \frac{(b_n)^{d-k}}{\prod_{s=k+1}^{d}r_{i_{s}}(n)}\\
&\ge s_n({\bf r},f)  \frac{(b_n)^{d-k}}{\prod_{s=k+1}^{d}r_{i_{s}}(n)}=b_n^{d},
\end{align*}
where the inequality uses the definition of $s_n({\bf r},f),$ and the last equality follows from (\ref{b_n}). For $j = k+1$,
\begin{align*}
&f(r_{i_j}(n))
r_{i_j}(n)^{j-d} r_{i_{j+1}}(n) \cdots r_{i_{k}}(n)b_n^{d-k}
\\
&= f(b_n) r_{i_j}(n) \cdots r_{i_k}(n) = f(b_n).
\end{align*}
The upshot is that, in Cases 1 and 2,
\[
\mu(B(\bx, \rho)) \le \frac{f(2 \rho)}{b_n^d}.
\]

\bigskip

\noindent
\textbf{Case 3.} Suppose $2\rho > b_n$. Then 
$$\mu(B(\bx,\rho)) \le 1 \le \frac{f(2\rho)}{f(b_n)} \ll \frac{f(2 \rho)}{b_n^d},$$
since $f\preceq d$. 

\bigskip

We have 
\[
\mu(B(\bx, \rho)) \ll \frac{f(2 \rho)}{b_n^d}
\]
in all three cases so, by Lemma \ref{p:MDP}, 
$$\mathcal{H}_{\infty}^{f}(H_n) \gg b_n^{d}.$$
\end{proof}

\subsection*{Acknowledgements} QZ was supported by NSFC 12201476 and CSC 202506950114. QZ also would like to take this opportunity to thank the number theory group at the University of Warwick --- in particular Sam Chow and Han Yu --- for the unreserved support and friendship they offered during his visit. We thank Victor Beresnevich for encouragement and comments.

\subsection*{Rights}

For the purpose of open access, SC has applied a Creative Commons Attribution (CC-BY) licence to any Author Accepted Manuscript version arising from this submission.

\end{document}